\documentclass[leqno]{amsart}
\usepackage{amsthm}
\usepackage{color}
\usepackage{amssymb}
\usepackage{amsmath,amsfonts,enumerate}
\usepackage{amsthm}
\usepackage{hyperref}

\numberwithin{equation}{section}

\newtheorem{thm}{Theorem}
\newtheorem{lem}[thm]{Lemma}

\newtheorem{rmk}[thm]{Remark}

\begin{document}
\title[Optimal distributional estimates of the multiple Hilbert transform]
{Optimal distributional estimates of the multiple Hilbert transform}

\author{F. Sukochev}
\address{School of Mathematics and Statistics, University of New South
	Wales, Kensington, 2052, NSW, Australia}
\email{f.sukochev@unsw.edu.au}
\author{K. Tulenov}
\address{	School of Mathematics and Statistics, University of New South Wales, Kensington, NSW, 2052, Australia;
	Institute of Mathematics and Mathematical Modeling, 050010 Almaty, Kazakhstan.}
\email{tulenov@math.kz}
\author{D. Zanin}
\address{School of Mathematics and Statistics, Central South University Changsha, 410075, People's Republic of China}
\email{d.zanin@csu.edu.cn}


\subjclass[2020]{46E30, 44A15, 42A50; Secondary 42B20, 42B25, 42B05.}
\keywords{Multiple Hilbert transform, Calder\'{o}n operator, optimal distributional  estimate}
\date{}
\begin{abstract} In this paper, we study optimal distributional estimates for the multiple Hilbert transform. We obtain pointwise upper and lower distributional estimates of the multiple Hilbert transform in terms of the $d$-fold composition of the Calder\'{o}n operator with itself. This extends the fundamental results by A. P. Calder\'{o}n \cite{Calderon-1966},  D.  Boyd  \cite{Boyd}, and Ch. Fefferman \cite{CFefferman1972} for arbitrary $d\in\mathbb N.$
\end{abstract}
\maketitle

\section{Introduction}

For locally integrable functions $f$ on $\mathbb{R},$  the Hilbert transform $H$ is defined (in the principal value sense)  by
\[
Hf(x) \overset{ \mathrm{p.v.} }{=}\int_{\mathbb{R}} \frac{f(y)}{x-y}dy
\overset{ \mathrm{p.v.} }{=}f* \frac{1}{x}=\lim_{\varepsilon\to 0^+} \int_{|y|>\varepsilon} \frac{f(x-y)}{y}\,dy.
\]
Let $S$ be  the Calder\'{o}n operator  \cite{Calderon-1966} (\cite[Chapter III.4, pp. 133-134]{BSh}) defined  by
$$
(Sf)(t):=\frac{1}{t}\int_0^t f(s)\,ds+\int_t^\infty f(s)\frac{ds}{s}, \quad f\in \Lambda_{\log}(\mathbb R_+),
$$
where $\Lambda_{\log}(\mathbb R_+)$ is the Lorentz space (see \eqref{Lorentz-log}) associated with the function $\log(1+t),\,\ t>0.$
For $d\in\mathbb N ,$ let $S^d:=S\circ \cdots \circ S$ denote the $d$-fold composition of $S$ \eqref{S d power} with itself  and $H^{\otimes d}$  \eqref{d-tensor-hilbert tr} be the multiple Hilbert transform defined by 
\begin{equation}\label{Mutiple_HT}
	H^{\otimes d}f(x):=H_{1} H_{2} \cdots H_d f(x)\overset{ \mathrm{p.v.} }{=}f * \frac{1}{x_1 x_2 \cdots x_d}, \,\ x=(x_1,\dots,x_d)\in\mathbb R^d,
\end{equation}
for locally integrable functions $f$ on $\mathbb R^d$  (see \cite{ChangFefferman1985}, \cite{CFefferman1972}). Here, $ H_{j} f(x)\overset{ \mathrm{p.v.} }{=}f * \frac{1}{x_j }, \,\ j=1,\dots,d.$
The following is the main result of the paper, which establishes optimal distributional estimates of the multiple Hilbert transform defined by \eqref{Mutiple_HT}.

\begin{thm}\label{main thm} Let $d\in\mathbb{N}$ and let $f\in {\rm dom}(S^d).$ We have 
	\begin{enumerate}[{\rm (i)}]
		\item  $\mu(H^{\otimes d}f)\leq c(d)S^d \mu(f),$
		
		where $\mu(f)$ is the non-increasing rearrangement of the function $|f|.$
		\item There exists a sequence $\{f_{N,d}\}_{N\in\mathbb{N}}$  of measurable functions: $f_{N,d}:\mathbb{R}^d\to \mathbb{R}$ such that $\mu(f_{N,d})=\mu(f)\chi_{(0,N^d)}$ and
		$$\liminf_{N\to\infty}\mu(H^{\otimes d}f_{N,d})\geq c'(d)S^d\mu(f)$$
		pointwise.
	\end{enumerate}
	Here,  $c(d)$ and $c'(d)$ are positive constants depending on $d$ only.
\end{thm}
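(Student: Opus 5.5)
The plan is induction on $d$. For $d=1$ both parts are classical: (i) is Calderón's estimate $\mu(Hf)\le c\,S\mu(f)$, and (ii) is essentially Boyd's lower bound, realised on decreasing functions supported on $(0,N)$.

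\textbf{Upper bound.} The key observation is that the Calderón operator dominates, in a rearrangement-invariant sense, each partial Hilbert transform acting in one variable. I would first prove a transference inequality. Let $T$ be a linear operator on $\mathbb{R}^{d}=\mathbb{R}^{d-1}\times\mathbb{R}$ acting only in the last variable, with $\mu(Tg)\le c\,S\mu(g)$ for functions of one variable. The claim is
\[
\mu(Tf)\le c'\,S\mu(f)\qquad\text{for } f \text{ on } \mathbb{R}^d .
\]
Since $S$ is the sum of the Hardy operator and its adjoint, the estimate $\mu(Tg)\le cS\mu(g)$ is equivalent, by Calderón's theorem, to $T$ being simultaneously of weak type $(1,1)$ and weak type $(\infty,\infty)$ restricted, or more concretely $L_1\to L_{1,\infty}$ and $L_\infty\to L_{\infty}$-BMO type (for $H$ one uses $L_p$, $p\in(1,\infty)$ endpoints interpolation via $K$-functional of the pair $(L_1,L_\infty)$). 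Weak-type bounds in one variable tensor with the identity: $L_{1,\infty}$ in the last variable integrated over the others gives $L_{1,\infty}(\mathbb{R}^d)$ by Fubini on distribution functions. For the upper end, rather than $L_\infty$ (where $H$ fails), I would use $L_{p}$ bounds with norms $O(p)$ as $p\to\infty$ and $O(1/(p-1))$ as $p\to 1$. These tensorise trivially by Fubini, and a Yano-type extrapolation in $K$-functional form then recovers $c\,S\mu$. Applying this with $T=H_d$ and $f$ replaced by $H_1\cdots H_{d-1}f$ (or, better, iterating: $\mu(H_d g)\le cS\mu(g)$ with $g=H^{\otimes(d-1)}f$, and then $S$ monotone plus induction) gives
\[
\mu(H^{\otimes d}f)\le cS\mu(H^{\otimes(d-1)}f)\le c\,c(d-1)S^{d}\mu(f),
\]
using that $S$ is positive and order preserving on decreasing functions. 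The main obstacle is the transference step, namely getting $S$ rather than a slightly worse operator when passing from one variable to $d$ variables. I expect to handle it by proving $\mu(H_d f)\le cS\mu(f)$ directly through the $K$-functional decomposition $f=f\chi_{\{|f|>\mu(t)\}}+f\chi_{\{|f|\le\mu(t)\}}$, combined with weak $(1,1)$ and a BMO/exponential-integrability estimate for the bounded part. That estimate also tensorises in one variable.

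\textbf{Lower bound.} I would take the one-dimensional extremal functions $g_N(s)=\mu(f)(s)\chi_{(0,N)}(s)$-type profiles and build $f_{N,d}$ through a measure-preserving map $\mathbb{R}^d\supset(0,N)^d\to(0,N^d)$ adapted to the product structure. The natural choice is
\[
f_{N,d}(x)=\mu(f)(x_1x_2\cdots x_d)\quad\text{on }(0,N)^d,
\]
possibly after a logarithmic rescaling so that its distribution equals $\mu(f)\chi_{(0,N^d)}$. The function $x_1\cdots x_d$ has distribution function on $(0,N)^d$ equal to $\tfrac{t}{(d-1)!}\log^{d-1}(N^d/t)+\dots$, so this needs adjusting. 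Instead I would take $f_{N,d}=F(\text{measure of }\{y\in(0,N)^d:\ y_1\cdots y_d<x_1\cdots x_d\})$, which is equimeasurable with $\mu(f)\chi_{(0,N^d)}$ and decreasing in each coordinate. For $x$ with all coordinates negative, every kernel factor $1/(x_j-y_j)$ has constant sign, so no cancellation occurs. One then computes $|H^{\otimes d}f_{N,d}(x)|=\int_{(0,N)^d}\frac{f_{N,d}(y)}{\prod_j(|x_j|+y_j)}dy$ and bounds it below by a multiple of $S^d\mu(f)(|x_1|\cdots|x_d|)$, using the explicit kernel of $S^d$: $S^d\phi(t)=\int_0^\infty\phi(s)\,\frac{1}{\max(s,t)}\frac{\log^{d-1}(\max(s,t)/\min(s,t))}{(d-1)!}\,ds$. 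Finally I would compare rearrangements as $N\to\infty$, using that $x\mapsto |x_1\cdots x_d|$ on the negative orthant has the right distribution up to logarithmic factors that match those in the kernel. This matching is delicate and is the second point where I expect work.
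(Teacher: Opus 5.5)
\textbf{Part (i).} This follows the paper's route. Each $\mathbf H_k={\rm id}^{\otimes k}\otimes H\otimes{\rm id}^{\otimes(d-1-k)}$ inherits weak type $(1,1)$ and the $L_p$ bounds of $H$ by Fubini (Lemma~\ref{STZ-assumption}). This yields $\mu(\mathbf H_kg)\le c\,S\mu(g)$, and positivity of $S$ lets you iterate one variable at a time. The paper takes that middle implication from \cite[Theorem 14]{STZ_JFA}.

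If you prove it yourself, be careful how you extrapolate. Cutting $f$ into dyadic level pieces, using an $L_p$ bound with $p\approx k$ on the $k$-th piece, and summing with the quasi-triangle inequality for $\mu$ gives $\sum_k k\,\mu(2^kt,f)$. That is a $C^{*2}$-type tail, not $C^*\mu(f)(t)$. The loss disappears with a split:
\begin{itemize}
\item handle $f\chi_{\{|f|>\mu(t,f)\}}$ by weak type $(1,1)$;
\item handle the bounded remainder $h$ by duality, $\int_0^t\mu(s,\mathbf H_kh)\,ds\le\sup\int_0^\infty\mu(s,h)\,\mu(s,\mathbf H_k\sigma)\,ds$ over $|\sigma|\le\chi_E$, $m(E)=t$.
\end{itemize}
The restricted bound $\mu(\mathbf H_k\sigma)\le c\,S\chi_{(0,m(E))}$ does tensorise. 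In one variable, Boyd's estimate plus dilation invariance of $S$ bound the distribution function of $H\sigma$ by $m(E)$ times a fixed function of the level, and this is linear in the measure of each slice.

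\textbf{Part (ii).} Here there is a genuine gap. Your extremal functions are the paper's. The measure of $\{y\in(0,N)^d:\ y_1\cdots y_d<u\}$ is $\psi_{N,d}(u)=N^d\psi_d(u/N^d)$, so $f_{N,d}=\mu(f)\circ\psi_{N,d}\circ p_d$ on $(0,N)^d$. Evaluating on the negative orthant to avoid cancellation is also what the paper does.

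The step you then rely on, $|H^{\otimes d}f_{N,d}(x)|\ge c\,(S^d\mu(f))(|x_1\cdots x_d|)$ with $c$ independent of $N$, is false for $d\ge2$:
\begin{itemize}
\item Since $\psi_{N,d}(u)\ge u$, we have $f_{N,d}\le\mu(f)\circ p_d$.
\item Since $\psi_{N,d}(u)\ge u\log(N^d/u)\to\infty$ for fixed $u$, we have $f_{N,d}\to0$ pointwise.
\item Writing $C_j,C_j^*$ for $C,C^*$ acting in the $j$-th variable, a substitution gives $C_j(F\circ p_d)=(CF)\circ p_d$ and $C_j^*(F\circ p_d)=(C^*F)\circ p_d$. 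Hence $S^{\otimes d}(F\circ p_d)=(S^dF)\circ p_d$, and so $\int_{\mathbb{R}^d_+}\mu(p_d(y),f)\prod_k(|x_k|+y_k)^{-1}\,dy\le(S^d\mu(f))(|x_1\cdots x_d|)<\infty$.
\end{itemize}
Dominated convergence then gives $H^{\otimes d}f_{N,d}(x)\to0$ at every fixed $x$ in the negative orthant. The lower bound exists only after rearrangement. It comes from points with $|x_1\cdots x_d|=\psi_{N,d}^{-1}(t)\sim\frac{(d-1)!}{d^{d-1}}\frac{t}{\log^{d-1}N}\to0$. So the ``matching of logarithmic factors'' you postpone is not final bookkeeping; it is the whole content of (ii).

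What is missing is the chain the paper actually carries out.
\begin{itemize}
\item From $\frac1{a+b}\ge\frac12\min\{\frac1a,\frac1b\}$ one gets $\mu(H^{\otimes d}f_{N,d})\ge c\max\{\mu(C^{\otimes d}f_{N,d}),\mu(C^{*\otimes d}f_{N,d})\}$, with $c$ depending only on $d$. This discards every mixed term of $S^{\otimes d}$.
\item On $(0,N)^d$, $C^{\otimes d}f_{N,d}=\big(C^d[\mu(f)\circ\psi_{N,d}]\big)\circ p_d$, whose rearrangement there is this function evaluated at $\psi_{N,d}^{-1}(t)$.
\item Substitute $v=\psi_{N,d}^{-1}(w)$ and use three facts: $\psi_{N,d}^{-1}(t)/\psi_{N,d}^{-1}(w)\ge t/w$; the limits $\log^{d-1}(N)\,\psi_{N,d}^{-1}(t)\to\frac{(d-1)!}{d^{d-1}}t$ and $(\log^{d-1}(N)\,\psi_{N,d}^{-1})'\to\frac{(d-1)!}{d^{d-1}}$; and a uniform bound on that derivative for dominated convergence. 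Together these give $\liminf_N\mu(C^{\otimes d}f_{N,d})\ge C^d\mu(f)$.
\item For $C^{*\otimes d}$, the cut-off at $N$ produces by inclusion--exclusion $2^d-2$ boundary terms whose rearrangements must be shown to tend to $0$. This is Lemma~\ref{third convergence lemma}, the most technical step, and it uses $\mu(f)\in\Lambda_{\varphi_d}$ through finiteness of $\int^\infty\mu(w,f)\log^{d-1}(w)\,\frac{dw}{w}$.
\end{itemize}

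Finally, the kernel you write for $S^d$ is that of $C^d+C^{*d}$ only. In fact $S^d=\sum_{k=1}^da_{d,k}(C^k+C^{*k})$ with positive integers $a_{d,k}$ (e.g.\ $S^2=C^2+C^{*2}+2S$). To end with $S^d\mu(f)$ you therefore also need $S^d\mu(f)\le c_d(C^d+C^{*d})\mu(f)$ for decreasing $\mu(f)$ (Lemma~\ref{post-kanat lemma}). That bound rests on $\mu(f)\le C\mu(f)$ and on splitting $\int_t^\infty$ at $et$.
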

Later we will show that ${\rm dom}(S^d)=\Lambda_{\varphi_d}(\mathbb{R}^d)$ is  the Lorentz space defined by \eqref{main-Lorentz}. This theorem extends the fundamental results by Calder\'{o}n \cite{Calderon-1966} (see \cite[Theorem III. 5.7]{BSh} ), Boyd \cite{Boyd},  \cite[Theorem III. 4.8, p. 138]{BSh}  and \cite[Proposition III. 4.10, p.140]{BSh}, and Ch. Fefferman \cite{CFefferman1972} in the special cases $d=1$ and $d=2,$ respectively, as well as the work of Zhizhiashvili \cite{Zhizhiashvili1963, Zhizhiashvili1973,Zhizhiashvili1974,Zhizhiashvili1985} for arbitrary $d\geq 1.$
The whole paper is devoted to the proof of this theorem. 

\subsection{Main motivation and historical background}

The study of singular integral operators that commute with multiparameter families of dilations reveals a striking and fundamental phenomenon: the number of parameters plays a decisive role, and even small changes in this number can drastically alter the behavior of the operators. Such operators arise naturally in connection with multiple Fourier integrals \cite{Zygmund1968} and multiple Fourier series \cite{Zhizhiashvili1996}, and their analysis highlights intrinsic difficulties of the multiparameter setting. In particular, singular integrals in this context may exhibit higher-dimensional singular sets, placing them beyond the direct reach of the classical Calder\'{o}n--Zygmund theory \cite{CFefferman1972},\cite{ChangFefferman1985}.

Among singular integrals compatible with multiparameter dilations, the most fundamental example is the multiple Hilbert transform $H^{\otimes d}$ (see \eqref{Mutiple_HT}), whose kernel possesses one-dimensional singularity sets. Already in the simplest nontrivial case $d=2$, the double Hilbert transform $H^{\otimes 2}$ exhibits behavior that sharply contrasts with the one-parameter theory. In particular, operators of this type fail to be of weak type $(1,1)$ on $L_1(\mathbb{R}^d)$.

Despite the modern terminology of multiparameter harmonic analysis (or harmonic analysis on product spaces), the origins of the subject date back to the early 1930s, when Jessen, Marcinkiewicz, Zygmund, and others investigated the strong maximal function \cite{JessenMarcinkiewiczZygmund1935}, \cite{SokolSokolowski1947}, \cite{Zygmund1949}. The contemporary framework was developed in the early 1980s through the pioneering work of R. Fefferman, Chang \cite{ChangFefferman1980,ChangFefferman1982,ChangFefferman1985}, and Stein \cite{FeffermanStein1982}, and further advanced by R. Fefferman \cite{Fefferman1987,Fefferman1979}. In particular, the $H^p$ theory on product domains, introduced by Gundy and Stein \cite{GundyStein1979}, provides a natural analogue of the classical one-dimensional theory and is closely connected with operators such as the double Hilbert transform. From the perspective of Fourier analysis, the Hilbert transform plays a decisive role in questions concerning the norm convergence of Fourier series and Fourier integrals. The multiple Hilbert transform arises naturally in the study of rectangular partial sums of multiple Fourier series: while in one dimension partial sums are governed by the Hilbert transform, in higher dimensions the corresponding role is played by $H^{\otimes d}$.

A landmark result of Ch. Fefferman \cite{CFefferman1972} in 1972 demonstrates that even operators with one-dimensional singularities, such as the maximal double Hilbert transform, require fundamentally new ideas. Specifically, such operators fail to satisfy weak type $(1,1)$ estimates. In the same work, Ch. Fefferman proved that the maximal double Hilbert transform
\begin{equation}\label{max-HT}
	H_* f(x,y) = \sup_{\varepsilon_1,\varepsilon_2>0}\left| \int\limits_{|x'|>\varepsilon_1}\int\limits_{|y'|>\varepsilon_2} \frac{f(x-x',y-y')}{x'y'}\,dx'dy' \right|
\end{equation}
belongs to $L_{1,\infty}([0,1]\times[0,1])$ for every $f \in L\log L(\mathbb R^2)$ with ${\rm supp}(f)\subset [0,1]\times[0,1]$, where $L\log L(\mathbb R^2)$ is the space of measurable functions  $f$ on $\mathbb{R}^{2}$ satisfying
$$
\int_{\mathbb{R}^{2}} |f(x)|\bigl(1+\log|f(x)|\bigr) dx < \infty,
$$
and $L_{1,\infty}$ denotes the weak-$L_1$ space defined by \eqref{weak-L1}. In particular, the same conclusion holds for the double Hilbert transform $H^{\otimes 2}f$. These results were subsequently extended to the general multiple Hilbert transform $H^{\otimes d}$ by L. Zhizhiashvili \cite{Zhizhiashvili1963, Zhizhiashvili1973,Zhizhiashvili1974,Zhizhiashvili1985,Zhizhiashvili1996}.

If we return to the classical case, a fundamental result due to D. Boyd \cite{Boyd}, \cite[Theorem III.4.8 and Proposition III.4.10]{BSh} establishes that if $f \in \Lambda_{\log}(\mathbb{R})$, then
\begin{equation}\label{eq:1.2}
	\mu(Hf) \leq c_{{\rm abs}} S\mu(f), 
	\quad \mu(f) \in \Lambda_{\log}(\mathbb{R}_+),
\end{equation}
and, moreover, for every such $f$ there exists a measurable function $g$ on $\mathbb{R}$ satisfying $\mu(g)=\mu(f)$ and
\begin{equation}\label{eq:1.3}
	S \mu(f) \leq 2\pi \, \mu(Hg).
\end{equation}
These distributional estimates lie at the heart of weak-type interpolation theory and the Marcinkiewicz interpolation theorem. In particular, Calder\'{o}n’s fundamental weak-type theorem (see \cite[Theorem III.5.7]{BSh}), together with Boyd’s estimates \eqref{eq:1.2}–\eqref{eq:1.3}, yields a characterization of joint weak-type $(p,p;q,q)$ interpolation spaces (see \cite[Chapter III]{BSh}). Furthermore, these inequalities not only establish the existence of the Hilbert transform on $L_p(\mathbb{R})$, $1\le p<\infty$, but also provide a direct route to weak type $(1,1)$ and strong type $(p,p)$ estimates via Hardy’s inequality \cite[Lemma III.3.9]{BSh}. Recent advances have produced analogous distributional estimates for other singular operators, including commutators with Calderón–Zygmund integral operators, Marcinkiewicz multipliers and Littlewood--Paley square functions \cite{SYZZ}.

To the best of our knowledge, no analogue of Boyd’s distributional estimates has been established in the multiparameter setting. The primary objective of this paper is to fill this gap by deriving counterparts of \eqref{eq:1.2} and \eqref{eq:1.3} for the multiple Hilbert transform. Our main result (Theorem \ref{main thm}) provides a complete extension of the Calder\'{o}n--Boyd theory to arbitrary dimension $d\in\mathbb{N}$. The upper distributional estimate (i) in Theorem \ref{main thm} is obtained via an iteration argument combined with techniques developed in \cite[Theorem 14]{STZ_JFA}, while the lower estimate (ii) in Theorem \ref{main thm} requires a substantially more delicate analysis.

In a broader sense, our results significantly strengthen those of Ch. Fefferman \cite{CFefferman1972} for $H^{\otimes 2}$ and of Zhizhiashvili for $H^{\otimes d}$, as it establishes boundedness in general symmetric function spaces, of which the setting considered in their papers appears as a particular case.  
\section{Preliminaries}
Let $d\in \mathbb{N}$ and $I\subset \mathbb{R}^d$ be a measurable set with the Lebesgue measure $m(I)$. We denote $S(I)$ the space of all Lebesgue $m$-measurable functions on $I$ such that $m(\{|f| > s\})$ is finite for some $s > 0.$ Let $L_p(I)$ ($1\leq p<\infty$) be the $L_p$-spaces of pointwise almost-everywhere equivalence classes of $p$-integrable functions in $S(I),$ while $L_{\infty}(I)$ denotes the space of essentially bounded functions on  $I.$

Define the weak-$L_{1}$ space $L_{1,\infty}(I)$ as a subset of  $S(I)$ equipped with the quasinorm defined by
\begin{equation}\label{weak-L1}
	\|f\|_{L_{1,\infty}(I)}=\sup\limits_{t>0}t\cdot \mu(t,f)=\sup\limits_{s>0} s\cdot m\{|f|>s\}, 
\end{equation}
where $\mu(f)$ is the decreasing rearrangement of the function $|f|.$ This space has the Fatou norm property, that is, if $\{f_n\}_{n\in \mathbb N}\subset L_{1,\infty}(I)$ such that $\|f_n\|_{1,\infty}\leq1$ for any $n\in\mathbb N,$ and $f_n\to f$ in measure as $n\to\infty,$ then $f\in L_{1,\infty}(I)$ and $\|f\|_{1,\infty}\leq1.$

For more details on these spaces, we refer the reader to \cite{G2008}.

The Lorentz space $\Lambda_{\log}(I)$ (see \cite{BSh}, \cite{KPS},  \cite{STZ_JFA})) is defined by setting
\begin{equation}\label{Lorentz-log}\Lambda_{\log}(I):=\left\{f\in S(I): \int_{0}^{m(I)}\frac{\mu(s,f)}{1+s}ds<\infty\right\}
	\end{equation}
equipped with the norm
$$\|f\|_{\Lambda_{\log}(I)}:=\int_{0}^{m(I)}\frac{\mu(s,f)}{1+s}ds.$$

Let $d\in\mathbb{N}.$ We shall need the following concave, increasing, and continuous function 
$\varphi_d$, given by the formula
\begin{equation}
	\varphi_{d}(t) =
	\begin{cases}
		\dfrac{1}{\Gamma(d+1)} 
		\displaystyle\int_{0}^{t} \log^{d}\!\left(\frac{1}{s}\right)\, ds + t, 
		& t \in (0,1), \\[12pt]
		2 + \dfrac{1}{\Gamma(d)} 
		\displaystyle\int_{1}^{t} 
		\left( \int_{0}^{1/s} \log^{d-1}\!\left(\frac{1}{u}\right)\, du \right) ds, 
		& t\in[1,\infty),
	\end{cases}
	\end{equation}
where $\Gamma(\cdot)$ is the Gamma function. 

The Lorentz space $\Lambda_{\varphi_d}(I)$ associated with the function $\varphi_d$ 
  is defined by setting
\begin{equation}\label{main-Lorentz}\Lambda_{\varphi_d}(I):=\left\{f\in S(I): \int_{0}^{m(I)}\mu(s,f)d\varphi_d(s)<\infty\right\}
		\end{equation}
equipped with the norm
$$\|f\|_{\Lambda_{\varphi_d}(I)}:=\int_{0}^{m(I)}\mu(s,f)d\varphi_d(s).$$

\subsection{Calder\'{o}n type operators and the Hilbert transform}
Throughout this paper we denote $\mathbb R_+:=(0,\infty).$ Define operators $C:(L_{1}+L_{\infty})(\mathbb R_+)\rightarrow (L_{1,\infty}+L_{\infty})(\mathbb R_+)$ by
\begin{equation}\label{Ces}(Cf)(t):=\frac{1}{t}\int_{0}^{t}f(s)ds , \,\ f\in(L_{1}+L_{\infty})(\mathbb R_+)
\end{equation}
and $C^{\ast}:\Lambda_{\log}(\mathbb R_+)\rightarrow S(\mathbb R_+) $ by
\begin{equation}\label{Cesaro-adjoint}
	(C^{\ast}f)(t):=\int_t^{\infty}f(s)\frac{ds}{s}, \,\, f\in\Lambda_{\log}(\mathbb R_+),
\end{equation}
where $C$ is called the Ces\`{a}ro operator \cite{STZ_JFA} (or else Hardy-Littlewood operator or Hardy operator as in \cite[Chapter II.3]{BSh},\cite[Chapter II.6]{KPS}) and $C^*$ its atjoint in $L_2$ sense.
For each $f\in \Lambda_{\log}(\mathbb R_+),$ define the Calder\'{o}n operator $S:\Lambda_{\log}(\mathbb R_+)\rightarrow(L_{1,\infty}+L_{\infty})(\mathbb R_+)$ as a sum of $C$ and $C^{\ast}$ by
\begin{equation}\label{S}
	(Sf)(t):=\frac1t\int_0^tf(s)ds+\int_t^{\infty}f(s)\frac{ds}{s}=(Cf)(t)+(C^{\ast}f)(t), \,\ f\in\Lambda_{\log}(\mathbb R_+).
\end{equation}
If $f\in \Lambda_{\log}(\mathbb{R}),$ then the classical Hilbert transform $H$ is defined by the principal-value integral
\begin{equation}\label{hilbert tr}
	(H f)(t)\overset{ \mathrm{p.v.} }{=}\frac{1}{\pi}\int_{\mathbb{R}}\frac{f(s)}{t-s}ds, \,\  f\in \Lambda_{\log}(\mathbb{R}),
\end{equation}
(see, e.g. \cite[Chapter III. 4]{BSh}).
\subsection{Integer powers of Calder\'{o}n type operators}

We denote by $\Lambda_{\log^d}(I)$ the Lorentz space defined using the function
$$
\psi(t) = \log^d(e^{d+1} + t), \quad t \geq 0.
$$
In this paper, we also need the notion of $d$ powers (or compositions) of the Ces\`{a}ro operators $C, C^*,$ and $S.$ For each $d\in\mathbb{N}$, let
$
C^d : (L_1 + L_\infty)(\mathbb R_+) \to S(\mathbb R_+)
$
be given by
\begin{equation}\label{Cesaro d power} (C^d f)(t)=\frac1{t\cdot\Gamma(d)}\int_0^tf(s)\log^{d-1}(\frac{t}{s})ds,\,\ f\in (L_1 + L_\infty)(\mathbb R_+).
\end{equation}
Its adjoint (in terms of the Hilbert spaces) operator
$
C^{* d} : \Lambda_{\log^d}(\mathbb R_+) \to S(\mathbb R_+)
$
is given by
\begin{equation}\label{Ces adj d power}(C^{\ast d} f)(t)=\frac1{\Gamma(d)}\int_t^{\infty}f(s)\log^{d-1}(\frac{s}{t})\frac{ds}{s}, \,\ f\in \Lambda_{\log^d}(\mathbb R_+),
\end{equation}
where $\Gamma(d):=(d-1)!$ is the Gamma function.
The Calder\'{o}n operator $
S^d : \Lambda_{\varphi_d}(\mathbb R_+) \to S(\mathbb R_+)
$
\begin{equation}\label{S d power}(S^d f)(t)=\int_{\mathbb R_+}\cdots\int_{\mathbb R_+} f(\eta)\min\{\frac{1}{t},\frac{1}{s}\}\cdots\min\{\frac{1}{\xi},\frac{1}{\eta}\}d\eta, \,\ f\in \Lambda_{\varphi_d}(\mathbb R_+).
\end{equation}
It follows from \cite[Lemma 3.6]{SYZZ} that the space $\Lambda_{\varphi_d}(\mathbb R_+)$ is a natural domain for the operator $S^d.$  In particular, for every $f\in \Lambda_{\varphi_d}(\mathbb R_+)$ we have $S^d(f)\in (L_1 +L_{\infty})(\mathbb R_+).$ For more information on these operators we refer the reader to \cite{SYZZ}.

\subsection{Tensor powers of Calder\'{o}n type operators and the multiple Hilbert transform}
Let $d\in\mathbb{N}$ and $\mathbb{R}_{+}^d=\{(t_1,\dots, t_d )\in\mathbb{R}^d: t_j>0,\,\ j=1,\dots d\}.$ 
For $f\in \Lambda_{\varphi_d}(\mathbb{R}_{+}^d)$ define $d$-tensor powers of the operators $C$, $C^{*},$ and $S$ by the following formulas
\begin{equation}\label{Ces-d-tensor}(C^{\otimes d}f)(t_1,\dots,t_d)
	=\frac{1}{t_1\cdots t_d}\int_{0}^{t_1}\cdots\int_{0}^{t_d}f(s_1,\dots,s_d)ds_1\cdots ds_d,
\end{equation}
\begin{equation}\label{Ces-adjoint-d-tensor}(C^{* \otimes d}f)(t_1,\dots,t_d)
	=\int_{t_1}^{\infty}\cdots\int_{t_d}^{\infty}f(s_1,\dots,s_d)\frac{ds_1}{s_1}\cdots \frac{ds_d}{s_d},
\end{equation}
and
\begin{equation}\label{S-d-tensor}
	(S^{\otimes d}f)(t_1,\dots,t_d):=\int_{\mathbb R_+}\cdots\int_{\mathbb R_+}f(s_1,\dots, s_d)\prod_{k=1}^d\min\{t_k^{-1},s_k^{-1}\}ds_1\cdots ds_d.
\end{equation}
Then for $s=(s_1,\dots,s_d),t=(t_1,\dots,t_d)\in\mathbb{R}_{+}^d$ kernels of these operators is defined respectively by 
\begin{equation}\label{Ces-d-tensor-kernel}
	K_{C^{\otimes d}}(s,t)=\frac{1}{t_1\cdots t_d}\chi_{\{s_1\leq t_1\}}\cdots\chi_{\{s_d\leq t_d\}},
\end{equation}
\begin{equation}\label{Ces-adjoint-d-tensor-kernel}
	K_{C^{*\otimes d}}(s,t)=\frac{1}{s_1\cdots s_d}\chi_{\{s_1> t_1\}}\cdots \chi_{\{s_d> t_d\}},
\end{equation}
and
\begin{equation}\label{S-d-tensor-kernel}
	K_{S^{\otimes d}}(s,t)=\prod_{k=1}^d\min\{t_k^{-1},s_k^{-1}\}.
\end{equation}
Similarly, for a function  $f\in \Lambda_{\varphi_d}(\mathbb{R}^d)$ define the multiple  Hilbert transform  by the principal-value integral
\begin{equation}\label{d-tensor-hilbert tr}
	(H^{\otimes d}f)(t_1,\dots,t_d)\overset{ \mathrm{p.v.} }{=}\frac{1}{\pi^d}\int_{\mathbb R}\cdots\int_{\mathbb R}\frac{f(s_1,\dots,s_d)}{\prod_{k=1}^d(t_k-s_k)}ds_1\cdots ds_d.
\end{equation}

\begin{rmk}
Our main result, Theorem \ref{main thm} shows that the space $\Lambda_{\varphi_d}(I)$ is the (natural) domain for the multiple Hilbert transform \eqref{d-tensor-hilbert tr} and $S^d$ \eqref{S d power}, therefore,  it will be used  frequently as the notation ${\rm dom}(S^d)$ i.e., ${\rm dom}(S^d):=\Lambda_{\varphi_d}(I).$
\end{rmk}

\subsection{Dilation on functions and sets}
For fixed values $t_1,\dots,t_d>0$, the $d$-dimensional anisotropic dilation is defined by
$$
\bigl(\sigma_{t_1,\dots,t_d} f\bigr)(s_1,\dots,s_d)
= f\!\left(\frac{s_1}{t_1},\dots,\frac{s_d}{t_d}\right).
$$
Let $f:\mathbb{R}_{+}^d\to\mathbb{R}_{+}$ be a measurable function. For a fixed $v>0$ set
$$\Omega_f(v)=f^{-1}(0,v),\ p_d(s)=s_1\cdots s_d, \,\ s=(s_1,\dots,s_d)\in \mathbb{R}^d_{+},$$
$$\sigma_{t_1,\dots,t_d}\Omega_f(v)=\Omega_{\sigma_{t_1,\dots,t_d}f}(v).$$
We have that
\begin{equation}\label{dilation-set-1,,,d}
	\sigma_{t_1,\dots,t_d} \Omega_{p_d}(v)=\Omega_{\sigma_{t_1,\dots,t_d}p_d}(v)=\Omega_{p_d}(t_1\cdots t_d\cdot v), \, v>0.
\end{equation}    
If $t_1=\dots=t_d=N\in\mathbb{N},$ we denote $\sigma_{t_1,\dots,t_d}$ simply by $\sigma_N,$ and in this setting we have
\begin{equation}\label{dilation-set-N}\sigma_N \Omega_{p_d}(v)=\Omega_{\sigma_{N}p_d}(v)=\Omega_{p_d}(N^d v),\,v>0.
\end{equation}
For $\xi=(\xi_1,\dots,\xi_d)\in\mathbb{R}^d$ let us also define the following sets
$$A_{d,\xi}(v):=\Omega_{p_d}(v)\cap (0,\xi_1)\times\dots\times(0,\xi_d),\ A_{d,1}(v)=A_{d,(1,\dots,1)},\ 0<v<1,
$$ where
\begin{equation}\label{A-set}A_{d,1}(v):=\Big\{(t_1,\cdots,t_d)\in(0,1)^d:\ t_1t_2\cdots t_d<v\Big\},\, 0<v<1,
\end{equation}
and
$$B_{d,{\xi}}(v):=\Omega_{p_d}(v)\cap (\xi_1,+\infty)\times\dots\times(\xi_d,+\infty),\ B_{d,1}(v)=B_{d,(1,\dots,1)},\, v>1,
$$ that is,
\begin{equation}\label{B-set} B_{d,1}(v)=\Big\{(t_1,\cdots,t_d)\in(1,\infty)^d:\ t_1t_2\cdots t_d<v\Big\},\, v>1.
\end{equation}
Hence, we have the following relations
\begin{equation}\label{A-set-dilation}\sigma_{t_1,\dots,t_d}\Big(A_{d,1}\Big(\frac{v}{t_1\cdots t_d}\Big)\Big)=A_{d,(t_1,\dots,t_d)}(v),\,0<v<1,,
\end{equation}
and 
\begin{equation}\label{B-set-dilation}\sigma_{t_1,\dots,t_d}\Big(B_{d,1}\Big(\frac{v}{t_1\cdots t_d}\Big)\Big)=B_{d,(t_1,\dots,t_d)}(v),\, v>1,
\end{equation} respectively.
Throughout this paper, the symbol $\sim$ denotes asymptotic equivalence. That is, for functions $f$ and $g,$ we write
$f(t) \sim g(t), \,\ t \to t_0$ if $$\lim_{t \to t_0} \frac{f(t)}{g(t)} = 1.$$

\section{Tensor powers of Ces\`{a}ro operator and its adjoint}

The following result gives the lower distributional estimates for the $d$-densor product of the Ces\`{a}ro and its adjoint operators, which plays the key role in the proof of Theorem \ref{main thm} (ii).
\begin{thm}\label{c tensor power thm} Let $d\in\mathbb{N}$ and let $C^{\otimes d},$ $C^{\ast \otimes d}$ be the operators defined by \eqref{Ces-d-tensor} and \eqref{Ces-adjoint-d-tensor}, respectively. Let $f\in S(0,\infty)$ be such that $\mu(f)\in{\rm dom}(S^d).$ There exists a sequence $\{f_{N,d}\}_{N\in\mathbb{N}}$ from $\mathbb{R}_+^d$ into $\mathbb{R}_+$ such that $\mu(f_{N,d})=\mu(f)\chi_{(0,N^d)}$ and
$$\liminf_{N\to\infty}\mu(C^{\otimes d}f_{N,d})\geq C^d\mu(f),\quad \liminf_{N\to\infty}\mu(C^{\ast \otimes d}f_{N,d})\geq C^{\ast d}\mu(f)$$
pointwise.
\end{thm}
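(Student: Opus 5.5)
Throughout, write $g:=\mu(f)$; it is non-increasing on $(0,\infty)$. The plan is to take $f_{N,d}$ to be the radial-in-product rearrangement of $g$: set
$$f_{N,d}(s):=g(s_1\cdots s_d)\,\chi_{(0,N)^d}(s),\qquad s\in\mathbb R_+^d.$$
This is the natural choice because the level sets of $p_d$ are exactly the sets $\Omega_{p_d}(v)$ introduced above.

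\textbf{Step 1: the rearrangement.} We have $\{f_{N,d}>\lambda\}=A_{d,(N,\dots,N)}(v_\lambda)$ with $v_\lambda=m\{g>\lambda\}$. By \eqref{A-set-dilation}, the measure of $A_{d,(N,\dots,N)}(v)$ equals $N^d\,m(A_{d,1}(v/N^d))$. A direct computation gives
$$m(A_{d,1}(u))=u\sum_{k=0}^{d-1}\frac{\log^k(1/u)}{k!},\qquad 0<u<1.$$
This is strictly increasing and tends to $1$ as $u\to1$. So the distribution function of $f_{N,d}$ is $\Phi_N(m\{g>\lambda\})$, where $\Phi_N(v)=N^d\,m(A_{d,1}(v/N^d))$ is increasing with $\Phi_N(v)\ge v$. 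Hence $f_{N,d}$ is equimeasurable with $g\circ\Phi_N^{-1}$ on $(0,N^d)$, not with $g\chi_{(0,N^d)}$ itself.

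To obtain exactly $\mu(f_{N,d})=\mu(f)\chi_{(0,N^d)}$, I will instead define
$$f_{N,d}(s):=g\bigl(\Phi_N(s_1\cdots s_d)\bigr)\chi_{(0,N)^d}(s).$$
Since $\Phi_N(v)\to v$ as $N\to\infty$ for fixed $v$ (the log terms become $\log(N^d/v)\cdot v/N^d\to0$ after normalisation), this modification is harmless in the limit. One could alternatively keep the first definition and note that $\Phi_N^{-1}(v)\to v$.

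\textbf{Step 2: computing the operators.} Fix $t\in\mathbb R_+^d$. Then
$$(C^{\otimes d}f_{N,d})(t)=\frac1{t_1\cdots t_d}\int_{\prod(0,t_k)}g(\Phi_N(p_d(s)))\,ds.$$
Substituting $u=p_d(s)$, the pushforward of Lebesgue measure on $\prod(0,t_k)$ under $p_d$ has density $\partial_v m(A_{d,t}(v))$. For $v<\tau:=t_1\cdots t_d$ this density equals $\log^{d-1}(\tau/v)/\Gamma(d)$. Hence, for $\tau$ below $N^d$ and $N$ large, monotone/dominated convergence gives
$$\lim_{N\to\infty}(C^{\otimes d}f_{N,d})(t)=\frac1{\tau\,\Gamma(d)}\int_0^\tau g(v)\log^{d-1}\Bigl(\frac{\tau}{v}\Bigr)dv=(C^dg)(p_d(t)),$$
by \eqref{Cesaro d power}. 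Similarly, the pushforward of $\frac{ds}{s_1\cdots s_d}$ on $\prod(t_k,N)$ has density
$$\frac{\log^{d-1}(v/\tau)}{\Gamma(d)}\cdot\frac1v,\qquad \tau<v<\tau N^d/\tau'$$
(with restrictions near the corners). Letting $N\to\infty$ gives $(C^{*\otimes d}f_{N,d})(t)\to (C^{*d}g)(p_d(t))$ by \eqref{Ces adj d power}, using $g\in{\rm dom}(S^d)$ for integrability. For the adjoint we only need a lower bound, so Fatou's lemma suffices.

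\textbf{Step 3: passing to rearrangements.} Both $C^dg$ and $C^{*d}g$ are non-increasing. Composing a non-increasing $h$ with $p_d$ and restricting to $(0,M)^d$ produces a function whose level sets are the sets $A_{d,(M,\dots,M)}$. Their measure is at least the corresponding $v$, since $\Phi_M(v)\ge v$. Therefore
$$\mu\bigl(h\circ p_d\,\chi_{(0,M)^d}\bigr)\ge h\circ\Phi_M^{-1}.$$
Fix $M$ and apply Fatou for rearrangements: pointwise a.e. convergence on the finite-measure cube $(0,M)^d$ gives $\liminf_N\mu(F_N\chi_{(0,M)^d})\ge\mu(F\chi_{(0,M)^d})$ at continuity points. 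This yields
$$\liminf_N\mu(C^{\otimes d}f_{N,d})\ge (C^dg)\circ\Phi_M^{-1}\ge (C^dg)\circ\Phi_M^{-1}.$$
Letting $M\to\infty$, we have $\Phi_M^{-1}(v)\to v$, and monotonicity gives $\ge C^dg$ at continuity points, hence everywhere, since $C^dg$ is continuous. The same argument handles $C^{*d}$.

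\textbf{Main obstacle.} I expect the hardest part to be the bookkeeping in Step 2 for $C^{*\otimes d}$. The truncation at $N$ cuts the integration region $\prod(t_k,N)$ non-symmetrically relative to the level sets of $p_d$, so one must check that the truncated pushforward density increases to the full one as $N\to\infty$. I would verify this by monotonicity in $N$, since everything is nonnegative, and then invoke monotone convergence.
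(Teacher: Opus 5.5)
There is a genuine gap, and it sits in the sentence ``this modification is harmless in the limit.'' Your $\Phi_N$ is the paper's $\psi_{N,d}$. For $d\geq 2$ the claim $\Phi_N(v)\to v$ is false: $\Phi_N(v)=N^d\psi_d(v/N^d)=v\sum_{k=0}^{d-1}\log^k(N^d/v)/k!\to\infty$. (It is $\psi_d(v/N^d)$ that tends to $0$; you dropped the factor $N^d$.) Correspondingly $\Phi_N^{-1}(v)\to 0$, and more precisely $\log^{d-1}(N)\,\Phi_N^{-1}(v)\to (d-1)!\,v/d^{d-1}$ by Lemma~\ref{third trivial lemma}. (For $d=1$, $\Phi_N$ is the identity and your argument is fine.)

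So each of your two definitions breaks one half of the argument.
\begin{itemize}
\item With $f_{N,d}=g\circ p_d\,\chi_{(0,N)^d}$, Step~2 is exact, since $C^{\otimes d}f_{N,d}=(C^dg)\circ p_d$ on $(0,N)^d$. Step~3 also goes through, because it only needs $\Phi_M(v)\ge v$. But $\mu(f_{N,d})=g\circ\Phi_N^{-1}\neq\mu(f)\chi_{(0,N^d)}$, so the required constraint fails.
\item With the corrected $f_{N,d}=g\circ\Phi_N\circ p_d$, which is exactly \eqref{def-f_N}, fix $t$ and put $\tau=p_d(t)$. Then
\[
(C^{\otimes d}f_{N,d})(t)=\frac{1}{\tau\,\Gamma(d)}\int_0^\tau g(\Phi_N(v))\log^{d-1}\Big(\frac{\tau}{v}\Big)\,dv\longrightarrow g(\infty)=0 ,
\]
and likewise $(C^{\ast\otimes d}f_{N,d})(t)\to 0$. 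Fatou in Step~3 then yields only the trivial bound.
\end{itemize}
A concrete check: take $d=2$ and $g=\chi_{(0,1)}$. Then $f_{N,2}=\chi_{\{s_1s_2<\Phi_N^{-1}(1)\}}$ on $(0,N)^2$, with $\Phi_N^{-1}(1)\sim(2\log N)^{-1}$. Hence $(C^{\otimes 2}f_{N,2})(1,1)=\psi_2(\Phi_N^{-1}(1))\to 0$, while $(C^2g)(1)=1$.

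The level sets that determine $\mu(t,\cdot)$ sit near $\{p_d<\Phi_N^{-1}(t)\}$, where $\Phi_N^{-1}(t)\asymp t/\log^{d-1}N\to 0$. No limit at a fixed point can see them. Your proposed monotone convergence in $N$ for the adjoint fails for the same reason: $\Phi_N$ increases with $N$, so $f_{N,d}$ decreases in $N$.

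What is missing is an exact computation of the rearrangement, followed by a proof that the two $\log^{d-1}N$ distortions cancel.
\begin{itemize}
\item On $(0,N)^d$ one has $C^{\otimes d}f_{N,d}=T_N\circ p_d$ with $T_N$ non-increasing. By the measure-preserving property of $\gamma_{N,d}$, this gives $\mu(t,C^{\otimes d}f_{N,d})\ge T_N(\Phi_N^{-1}(t))$.
\item One must then show $\liminf_N T_N(\Phi_N^{-1}(t))\ge (C^dg)(t)$. The paper substitutes $v=\Phi_N^{-1}(w)$, uses $\Phi_N^{-1}(t)/\Phi_N^{-1}(w)\ge t/w$, and applies dominated convergence to $\log^{d-1}(N)(\Phi_N^{-1})'$.
\item An equivalent, shorter route: write $T_N(\Phi_N^{-1}(t))=\frac{1}{\Gamma(d)}\int_0^1 g\big(\Phi_N(\Phi_N^{-1}(t)x)\big)\log^{d-1}(1/x)\,dx$. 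Here $\Phi_N(\Phi_N^{-1}(t)x)\ge tx$ and $\Phi_N(\Phi_N^{-1}(t)x)\to tx$. Right-continuity of $g$ plus domination by $g(tx)\log^{d-1}(1/x)$ finishes the argument.
\item Note that this comparison gives $T_N(\Phi_N^{-1}(t))\le (C^dg)(t)$ for every $N$. So the lower bound is a genuine limit statement, not a monotonicity shortcut.
\end{itemize}

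For $C^{\ast\otimes d}$ there is a second real issue that you only flagged. Because of the cut-off at $N$ in every coordinate, $C^{\ast\otimes d}f_{N,d}$ is not a function of $p_d(t)$. The paper handles this with the inclusion--exclusion formula of Lemma~\ref{second main equality lemma}. The main term $F_{N,\emptyset}$ is treated by the rearrangement-plus-asymptotics argument above. The corner terms $F_{N,S}$, $S\neq\emptyset$, must then be shown to have rearrangements tending to $0$, which is Lemma~\ref{third convergence lemma}. Your proposal has no substitute for this step.
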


For $d\in \mathbb{N},$ define
\begin{equation}\label{psi_d-function}\psi_d(t)=t\sum_{k=0}^{d-1}\frac{\log^k\big(\frac1t\big)}{k!},\quad t>0.
\end{equation}

The following lemma establishes several elementary but useful properties of $\psi_d.$
\begin{lem}\label{primative-lemma}Let $d\in \mathbb{N}.$
For $t>0,$
$$
\psi_d(t)=t\sum_{k=0}^{d-1}\frac{\log^k\big(\frac{1}{t}\big)}{k!}
$$
is a primitive of the function
$$
t\mapsto\frac{\log^{d-1}(\frac{1}{t})}{(d-1)!},
$$
and satisfies $\psi_d(1)=1$.

Moreover, $\psi_d$ is strictly increasing on $(0,1]$ and strictly decreasing on $[1,\infty)$ for even $d\geq 2,$ and $\psi_d$ is strictly increasing on $(0,\infty)$ for odd $d.$ 
\end{lem}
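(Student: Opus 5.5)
The plan is a direct computation: differentiate $\psi_d$ term by term and observe that the sum telescopes. The monotonicity claims then follow from the sign of the resulting derivative. Write $L(t):=\log(1/t)$, so that $L'(t)=-1/t$ and $L(1)=0$.

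\emph{Primitive.} By the product rule, for each $k\ge 1$,
$$\frac{d}{dt}\Big(t\,\frac{L(t)^k}{k!}\Big)=\frac{L(t)^k}{k!}+t\cdot\frac{k\,L(t)^{k-1}}{k!}\cdot\Big(-\frac1t\Big)=\frac{L(t)^k}{k!}-\frac{L(t)^{k-1}}{(k-1)!}.$$
The $k=0$ term simply gives $\frac{d}{dt}t=1=L^0/0!$. Summing over $k=0,\dots,d-1$, the sum telescopes to
$$\psi_d'(t)=\frac{L(t)^{d-1}}{(d-1)!}=\frac{\log^{d-1}(1/t)}{(d-1)!},\qquad t>0.$$
So $\psi_d$ is a primitive of the stated function. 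Evaluating at $t=1$, every term with $k\ge1$ vanishes because $L(1)=0$. Only the $k=0$ term survives, giving $\psi_d(1)=1$.

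\emph{Monotonicity.} I will read off the sign of $\psi_d'$. On $(0,1)$ we have $L(t)>0$, so $\psi_d'>0$ for every $d$, and $\psi_d$ is strictly increasing on $(0,1]$. On $(1,\infty)$ we have $L(t)<0$, so the sign of $\psi_d'$ is $(-1)^{d-1}$.
\begin{enumerate}[(a)]
\item If $d$ is even, then $\psi_d'<0$ on $(1,\infty)$, and $\psi_d$ is strictly decreasing on $[1,\infty)$.
\item If $d$ is odd, then $\psi_d'>0$ on $(0,1)\cup(1,\infty)$, vanishing only at the single point $t=1$ when $d\ge3$. For $d=1$ we have $\psi_1(t)=t$ and $\psi_1'\equiv1$.
\end{enumerate}
A continuous function whose derivative is positive except at one isolated point is strictly increasing: apply the mean value theorem separately on $[a,1]$ and $[1,b]$ when an interval straddles $1$. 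Hence $\psi_d$ is strictly increasing on $(0,\infty)$ for odd $d$.

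There is no real obstacle here. The only points needing care are the bookkeeping in the telescoping sum and the isolated zero of $\psi_d'$ at $t=1$ in the odd case, which the mean-value argument handles.
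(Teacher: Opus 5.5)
Your proposal is correct and follows the paper's proof: differentiate term by term, let the sum telescope to $\psi_d'(t)=\log^{d-1}(1/t)/(d-1)!$, and read off monotonicity from the sign of $\log(1/t)$. Your mean-value remark about the isolated zero of $\psi_d'$ at $t=1$ for odd $d$ is a small piece of care that the paper leaves implicit.
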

\begin{proof} Differentiating, we obtain
\begin{eqnarray*}\psi_d'(t)&=&\sum_{k=0}^{d-1}\frac{\log^k\big(\frac{1}{t}\big)}{k!}
	+ t\cdot\sum_{k=1}^{d-1}\frac{k\log^{k-1}(\frac1t)}{k!}\cdot (-\frac1t)\\
&=&\sum_{k=0}^{d-1}\frac{\log^k\big(\frac{1}{t}\big)}{k!}-\sum_{k=1}^{d-1}\frac{\log^{k-1}(\frac1t)}{(k-1)!}=\sum_{k=0}^{d-1}\frac{\log^k\big(\frac{1}{t}\big)}{k!}-\sum_{k=0}^{d-2}\frac{\log^k\big(\frac{1}{t}\big)}{k!}\\
&=&\frac{\log^{d-1}(\frac{1}{t})}{(d-1)!}.
	\end{eqnarray*}
	$$
	\psi_d(1)=1+\frac{\log\big(\frac{1}{1}\big)}{1!}+\cdots +\frac{\log^{d-1}\big(\frac{1}{1}\big)}{(d-1)!}=1.
	$$
	The case $d=1$ is immediate since
	$\psi_1(t)=t$ and hence,  $\psi_1'(t)=1.$  Assume \(d\ge2\). Since
	$$
	\psi_d'(t)=\frac{\log^{\,d-1}\big(\frac{1}{t}\big)}{(d-1)!},
	$$
	the sign assertions follow immediately from this formula and the fact that
	$\log(1/t)>0$ for \(0<t<1\), $\log(1/t)=0$ at $t=1, $  and $\log(1/t)<0$
	for $t>1,$ which completes the proof.
\end{proof}

We now establish a recurrence formula connecting $\psi_d$ and $\psi_{d-1}$, which plays a key role in the proof of subsequent results.
\begin{lem}\label{psid induction} Let $\psi_d$ be the function defined by \eqref{psi_d-function}. For $d\in \mathbb{N}$ and $\psi_0(v):=0$ we have
	$$ \psi_d(v)=v+\int_v^1\psi_{d-1}(\frac{v}{t})dt,\quad v>0.$$
	
\end{lem}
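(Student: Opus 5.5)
The identity is a direct computation: a multiplicative change of variables followed by a logarithmic one turns the integral of $\psi_{d-1}$ into a termwise integral of powers of $\log$, which reproduces the missing terms of $\psi_d$. The case $d=1$ is immediate, since $\psi_0=0$ and $\psi_1(v)=v$ by \eqref{psi_d-function}. So assume $d\ge 2$ and fix $v>0$. Throughout, $\int_v^1$ is understood as an oriented integral, so no case distinction between $v<1$ and $v>1$ is needed.

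\emph{Step 1.} Substitute $u=v/t$, so $dt=-v\,u^{-2}\,du$. The endpoints $t=v$ and $t=1$ correspond to $u=1$ and $u=v$, which gives
$$\int_v^1\psi_{d-1}\Big(\frac{v}{t}\Big)dt=v\int_v^1\frac{\psi_{d-1}(u)}{u^2}\,du .$$
By \eqref{psi_d-function},
$$\frac{\psi_{d-1}(u)}{u}=\sum_{k=0}^{d-2}\frac{\log^k(\frac1u)}{k!},$$
so the right-hand side equals
$$v\sum_{k=0}^{d-2}\frac1{k!}\int_v^1\log^k\Big(\frac1u\Big)\frac{du}{u}.$$

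\emph{Step 2.} Substitute $w=\log(1/u)$, so $du/u=-dw$. With $L=\log(1/v)$, the endpoints $u=v$ and $u=1$ become $w=L$ and $w=0$. Each integral is therefore
$$\int_0^{L}w^k\,dw=\frac{L^{k+1}}{k+1}.$$
This holds for either sign of $L$, because the integrals are oriented.

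\emph{Step 3.} Reindexing with $j=k+1$ gives
$$\int_v^1\psi_{d-1}\Big(\frac vt\Big)dt=v\sum_{j=1}^{d-1}\frac{\log^j(\frac1v)}{j!}.$$
Adding $v$, which is the $j=0$ term, gives exactly $\psi_d(v)$.

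There is no real obstacle. The only point needing care is keeping the orientation of the integrals consistent under the two substitutions, so that the identity holds uniformly for all $v>0$.
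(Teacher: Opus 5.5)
Your proof is correct and follows essentially the same route as the paper, which substitutes $t=vs$ and then $s=e^u$. These compose to the same change of variables as your $u=v/t$, $w=\log(1/u)$ (the paper's $s$ is your $1/u$ and its $u$ is your $w$), followed by the same termwise integration and reindexing; your explicit remark on oriented integrals covering $v>1$ and your separate check of $d=1$ (which the paper absorbs as an empty sum) are both fine.
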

\begin{proof} Substituting $t=vs,$ we write
$$v+\int_v^1\psi_{d-1}(\frac{v}{t})dt=v+v\int_1^{v^{-1}}\psi_{d-1}(\frac1s)ds=v+v\int_1^{v^{-1}}\Big(\sum_{k=0}^{d-2}\frac{\log^k(s)}{k!}\Big)\frac{ds}{s}.$$	
	Substituting $s=e^u,$ we write
\begin{eqnarray*}v+\int_v^1\psi_{d-1}(\frac{v}{t})dt&=&v+v\int_0^{\log(\frac1v)}\Big(\sum_{k=0}^{d-2}\frac{u^k}{k!}\Big)du\\
&=&v+v\cdot\Big(\sum_{k=0}^{d-2}\frac{u^{k+1}}{(k+1)!}\Big|_{u=0}^{\log(\frac1v)}\Big)=v+v\cdot\sum_{k=0}^{d-2}\frac{\log^{k+1}(\frac1v)}{(k+1)!}\\
&=&v\Big(1+\sum_{k=1}^{d-1}\frac{\log^{k}(\frac1v)}{k!}\Big)=v\cdot\sum_{k=0}^{d-1}\frac{\log^{k}(\frac1v)}{k!}=\psi_d(v).
	\end{eqnarray*}
\end{proof}

The following lemma establishes a direct connection between the function $\psi_d$ and the measure of the set $A_{d,1}(v),\,\ 0<v<1$ defined in \eqref{A-set}.
\begin{lem}\label{first volume computation} Let $d\in \mathbb{N}.$ If $0<v<1$ and $A_{d,1}(v)$ be as in \eqref{A-set}, then
	$$m\Big(A_{d,1}(v)\Big)=\psi_d(v),$$
	where $\psi_d$ is the function defined by \eqref{psi_d-function}.
\end{lem}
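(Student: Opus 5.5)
We argue by induction on $d$, using the recurrence of Lemma \ref{psid induction} with $\psi_0:=0$. For $d=1$ the set $A_{1,1}(v)=\{t_1\in(0,1): t_1<v\}=(0,v)$ has measure $v=\psi_1(v)$, so the base case holds.

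For the inductive step, assume $m(A_{d-1,1}(w))=\psi_{d-1}(w)$ for all $0<w<1$, and fix $0<v<1$. By Fubini we slice $A_{d,1}(v)$ in the last variable $t=t_d\in(0,1)$. For fixed $t$, the section consists of the points $(t_1,\dots,t_{d-1})\in(0,1)^{d-1}$ with $t_1\cdots t_{d-1}<v/t$. There are two regimes:
\begin{itemize}
\item If $t\le v$, then $v/t\ge 1$. Every point of $(0,1)^{d-1}$ has product less than $1\le v/t$, so the section is all of $(0,1)^{d-1}$ and has measure $1$. (For $d=1$ the section is the single point, consistent with $\psi_0=0$.)
\item If $v<t<1$, then $v/t\in(0,1)$, and the section is exactly $A_{d-1,1}(v/t)$. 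By the induction hypothesis its measure is $\psi_{d-1}(v/t)$.
\end{itemize}
Integrating over $t$ gives
$$m\big(A_{d,1}(v)\big)=\int_0^v 1\,dt+\int_v^1\psi_{d-1}\Big(\frac{v}{t}\Big)dt=v+\int_v^1\psi_{d-1}\Big(\frac{v}{t}\Big)dt=\psi_d(v),$$
where the last equality is Lemma \ref{psid induction}.

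The only step needing care is the boundary case $t\le v$. There one must check that the constraint $t_1\cdots t_{d-1}<v/t$ is vacuous on $(0,1)^{d-1}$, which is what produces the standalone term $v$ in the recurrence rather than another $\psi_{d-1}$ term. Measurability of the set and the use of Fubini are routine, since $A_{d,1}(v)$ is open.
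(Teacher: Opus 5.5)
Your proposal is correct and follows the paper's proof essentially verbatim. You induct on $d$, slice in $t_d$ into the full block $(0,1)^{d-1}\times(0,v)$ and the sections $A_{d-1,1}(v/t_d)$ for $t_d\in(v,1)$, and close with the recurrence of Lemma~\ref{psid induction}. Your parenthetical about the case $d=1$ is unnecessary, since the base case is handled directly, but it does no harm.
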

\begin{proof} Let us prove the assertion by induction on $d.$ Base of induction (the case $d=1$) is obvious. Let us establish the step of induction. Assume the statement holds for $d-1$ and let us prove it for $d.$
		If $t_d\in(0,v),$ then for every $t_1,\cdots,t_{d-1}\in(0,1)^{d-1}$ we have $t_1\cdots t_d<v.$ Therefore,
	$$A_{d,1}(v)=\Big((0,1)^{d-1}\times(0,v)\Big)\bigcup \Big\{(t_1,\cdots,t_d): t_d\in(v,1),\ (t_1,\cdots,t_{d-1})\in A_{d-1,1}(\frac{v}{t_d})\Big\}.$$
	
	Thus,
	$$m(A_{d,1}(v))=v+\int_v^1m(A_{d-1,1}(\frac{v}{t_d}))dt_d.$$
	
	By the induction hypothesis
	$$m(A_{d-1,1}(u))=\psi_{d-1}(u),\quad 0<u<1.$$
	Thus,
	$$m(A_{d,1}(v))=v+\int_v^1\psi_{d-1}(\frac{v}{t_d})dt_d.$$
	The step of induction follows now from Lemma \ref{psid induction}. This completes the proof.
\end{proof}

The following lemma gives the volume formula corresponding to the case $v>1.$
\begin{lem}\label{second volume computation}Let $d\in \mathbb{N}.$ If $v>1$ and $B_{d,1}(v)$ be as in \eqref{B-set}, then
	$$m\Big(B_{d,1}(v)\Big)=(-1)^{d-1}(\psi_d(v)-1),$$
	where $\psi_d$ is the function defined by \eqref{psi_d-function}.
\end{lem}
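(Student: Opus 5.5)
We argue by induction on $d$, mirroring the proof of Lemma \ref{first volume computation}; the recurrence we need is a ``$v>1$'' analogue of Lemma \ref{psid induction}. For $d=1$ we have $B_{1,1}(v)=(1,v)$, so $m(B_{1,1}(v))=v-1=(-1)^0(\psi_1(v)-1)$, since $\psi_1(v)=v$.

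For the induction step, fix $v>1$ and slice $B_{d,1}(v)$ along the last coordinate. A point $(t_1,\dots,t_d)$ with all $t_j>1$ and $t_1\cdots t_d<v$ must have $t_d\in(1,v)$, because $t_1\cdots t_{d-1}>1$. For such $t_d$, the fibre is exactly $B_{d-1,1}(v/t_d)$, and $v/t_d>1$. Hence
$$m(B_{d,1}(v))=\int_1^v m\Big(B_{d-1,1}\Big(\frac{v}{t}\Big)\Big)\,dt=(-1)^{d-2}\int_1^v\Big(\psi_{d-1}\Big(\frac vt\Big)-1\Big)\,dt$$
by the induction hypothesis. It therefore suffices to verify the identity
$$\int_1^v\Big(\psi_{d-1}\Big(\frac vt\Big)-1\Big)\,dt=1-\psi_d(v),\qquad v>1,$$
which is equivalent to $\psi_d(v)=v-\int_1^v\psi_{d-1}(v/t)\,dt$.

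This identity is precisely Lemma \ref{psid induction} with the orientation of the integral reversed. That lemma's computation is purely algebraic and is valid for all $v>0$, including $v>1$: there $\int_v^1=-\int_1^v$. So Lemma \ref{psid induction} gives
$$\psi_d(v)=v+\int_v^1\psi_{d-1}\Big(\frac vt\Big)\,dt=v-\int_1^v\psi_{d-1}\Big(\frac vt\Big)\,dt.$$
Subtracting $\int_1^v 1\,dt=v-1$ then yields
$$\int_1^v\Big(\psi_{d-1}\Big(\frac vt\Big)-1\Big)\,dt=v-\psi_d(v)-(v-1)=1-\psi_d(v).$$
Consequently $m(B_{d,1}(v))=(-1)^{d-2}(1-\psi_d(v))=(-1)^{d-1}(\psi_d(v)-1)$, which completes the induction.

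The only point needing care is confirming that the substitutions $t=vs$, $s=e^u$ in the proof of Lemma \ref{psid induction} remain valid when $v>1$, where $\log(1/v)<0$. The substitutions themselves are legitimate, and the antiderivative evaluation $\sum_k u^{k+1}/(k+1)!\big|_0^{\log(1/v)}$ holds verbatim for negative upper limits, so the lemma applies without change. The case $d=1$ of the recurrence, with $\psi_0=0$, also needs no separate treatment: it gives $\psi_1(v)=v$, consistent with the base case.
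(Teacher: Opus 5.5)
Your proof is correct and follows essentially the same route as the paper. Both argue by induction on $d$, slice $B_{d,1}(v)$ along $t_d\in(1,v)$ to get $m(B_{d,1}(v))=\int_1^v m\bigl(B_{d-1,1}(v/t)\bigr)\,dt$, and finish with Lemma~\ref{psid induction} (valid for all $v>0$) after reversing the orientation of the integral. You also make explicit two points the paper leaves tacit: that $v/t_d>1$, so the induction hypothesis applies, and that the substitutions in Lemma~\ref{psid induction} remain valid when $v>1$.
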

\begin{proof} We prove the assertion by induction on $d.$ Base of induction (the case $d=1$) is obvious. Let us establish the step of induction. Assume the statement holds for $d-1$ and let us prove it for $d.$
	Clearly,
	$$B_{d,1}(v)=\Big\{(t_1,\cdots,t_d):\ t_d\in(1,v),\ (t_1,\cdots,t_{d-1})\in B_{d-1,1}(\frac{v}{t_d})\Big\}.$$
	Thus,
	$$m(B_{d,1}(v))=\int_1^vm(B_{d-1,1}(\frac{v}{t_d}))dt_d.$$
	
	By the induction hypothesis
	$$m(B_{d-1,1}(u))=(-1)^{d-2}(\psi_{d-1}(u)-1).$$
	Thus,
	\begin{eqnarray*}m(B_{d,1}(v))&=&(-1)^{d-2}\int_1^v(\psi_{d-1}(\frac{v}{t_d})-1)dt_d\\
		&=&(-1)^{d-1}\int_v^1(\psi_{d-1}(\frac{v}{t_d})-1)dt_d\\
	&=&(-1)^{d-1}\Big(v-1+\int_v^1\psi_{d-1}(\frac{v}{t_d})dt_d\Big)\stackrel{L.\ref{psid induction}}{=}(-1)^{d-1}(\psi_d(v)-1).
	\end{eqnarray*}
	This yields the step of induction and, hence, completes the proof.
\end{proof}
Let $d\in\mathbb{N}.$ For any fixed $N\in\mathbb{N}$ we set
\begin{equation}\label{psi_N,d-function}
\psi_{N,d}(t)=N^d\psi_d(\frac{t}{N^d}),\quad t>0.
\end{equation}
Differentiating as in Lemma \ref{primative-lemma} gives
$$\psi_{N,d}'(t)=\psi_d'(\frac{t}{N^d})=\frac{\log^{d-1}\big(\frac{N^d}{t}\big)}{(d-1)!},\quad t>0.$$
For $t\in(0,N^d)$ we have $\log(\frac{N^d}{t})>0$, hence, $\psi_{N,d}'(t)>0$. 
Thus, $\psi_{N,d}$ is strictly increasing on $(0,N^d).$ 
Consequently, it is bijective on $(0,N^d).$

We now demonstrate that a mapping induced by $\psi_{N,d}$ is measure preserving, a property that will be crucial for the subsequent tensor estimates.

\begin{lem}\label{gamma_is_measure_preserving_d}Let $\psi_{N,d}$ be the function defined by \eqref{psi_N,d-function}. For every $d,N\in\mathbb{N},$ the map $\gamma_{N,d}:(0,N)^d\to(0,N^d)$ given by the formula
$$\gamma_{N,d}(t_1,\dots,t_d)=\psi_{N,d}(\prod_{k=1}^dt_k),\quad t=(t_1,\cdots ,t_d),$$
is measure preserving.
\end{lem}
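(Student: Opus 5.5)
It suffices to show that $m(\gamma_{N,d}^{-1}(0,a))=a$ for every $a\in(0,N^d)$. The intervals $(0,a)$ generate the Borel $\sigma$-algebra of $(0,N^d)$ and form a $\pi$-system. Both $m\circ\gamma_{N,d}^{-1}$ and $m$ are finite measures on $(0,N^d)$. Once their values agree on these intervals, the $\pi$-$\lambda$ (Dynkin) theorem gives equality on all Borel sets. Null sets are handled by completion: the pushforward of Lebesgue measure under the measurable map $\gamma_{N,d}$ is absolutely continuous once it equals $m$ on Borel sets.

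The computation proceeds as follows. As observed just before the statement, $\psi_{N,d}$ is a strictly increasing continuous bijection of $(0,N^d)$ onto itself. Indeed $\psi_{N,d}(0^+)=0$, and $\psi_{N,d}(N^d)=N^d\psi_d(1)=N^d$ by Lemma \ref{primative-lemma}. The product $t_1\cdots t_d$ ranges over $(0,N^d)$ when $t\in(0,N)^d$. Hence for $a\in(0,N^d)$, with $v=\psi_{N,d}^{-1}(a)\in(0,N^d)$,
$$\gamma_{N,d}^{-1}(0,a)=\{t\in(0,N)^d:\ t_1\cdots t_d<v\}=A_{d,(N,\dots,N)}(v).$$
By the dilation relation \eqref{A-set-dilation} with $t_1=\dots=t_d=N$, this set is $\sigma_N(A_{d,1}(v/N^d))$. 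An isotropic dilation by $N$ in each of the $d$ coordinates multiplies Lebesgue measure by $N^d$. Combining this with Lemma \ref{first volume computation} (applicable since $v/N^d\in(0,1)$), we get
$$m(\gamma_{N,d}^{-1}(0,a))=N^d\,m\big(A_{d,1}(v/N^d)\big)=N^d\psi_d(v/N^d)=\psi_{N,d}(v)=a.$$

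The only points requiring care are the following. First, $\gamma_{N,d}$ must be measurable; it is continuous, being a composition of continuous maps. Second, the endpoint behaviour of $\psi_{N,d}$ must be verified so that $\psi_{N,d}^{-1}$ is defined on all of $(0,N^d)$. This holds because $\psi_d(t)\to0$ as $t\to0^+$, since $t\log^k(1/t)\to0$. Third, the dilation convention must be tracked correctly. Here $\sigma_N$ maps a set $E$ to $\{Ns: s\in E\}$, so its measure scales by $N^d$, consistent with \eqref{dilation-set-N}. I expect no step to be a genuine obstacle, since the volume identity in Lemma \ref{first volume computation} does the substantive work.
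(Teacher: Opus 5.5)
Your proof is correct and is the paper's argument. Both identify $\gamma_{N,d}^{-1}((0,a))$ with $\sigma_N\bigl(A_{d,1}(\psi_d^{-1}(a/N^d))\bigr)$, scale Lebesgue measure by $N^d$, and invoke Lemma \ref{first volume computation} to get $m(\gamma_{N,d}^{-1}((0,a)))=a$. Your only addition is the $\pi$--$\lambda$ justification that agreement on the intervals $(0,a)$ suffices (total masses both equal $N^d$, e.g.\ by letting $a\uparrow N^d$), which the paper leaves implicit.
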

\begin{proof} For every $a\in(0,N^d),$ we have
\begin{eqnarray*}\gamma_{N,d}^{-1}((0,a))&=&\Big\{(t_1,\cdots,t_d)\in(0,N)^d:\ \psi_{N,d}(\prod_{k=1}^dt_k)<a\Big\}\\
	&=&\Big\{(t_1,\cdots,t_d)\in(0,N)^d:\ \prod_{k=1}^dt_k<\psi_{N,d}^{-1}(a)\Big\}\\
	&=&\Big\{(t_1,\cdots,t_d)\in(0,N)^d:\ \prod_{k=1}^dt_k<N^d\psi_d^{-1}(\frac{a}{N^d})\Big\}\\
	&=&\sigma_N\Big(\Big\{(t_1,\cdots,t_d)\in(0,1)^d:\ \prod_{k=1}^dt_k<\psi_d^{-1}(\frac{a}{N^d})\Big\}\Big).
\end{eqnarray*}
Since
$$m(\sigma_N(A))=N^dm(A),\quad A\subset\mathbb{R}^d,$$
it follows that
$$m(\gamma_{N,d}^{-1}((0,a)))=N^d\cdot m\Big(\Big\{(t_1,\cdots,t_d)\in(0,1)^d:\ \prod_{k=1}^dt_k<\psi_d^{-1}(\frac{a}{N^d})\Big\}\Big).$$
Using Lemma \ref{first volume computation}, we write
$$m(\gamma_{N,d}^{-1}((0,a)))=N^d\cdot\psi_d(\psi_d^{-1}(\frac{a}{N^d})) =N^d\cdot\frac{a}{N^d}=a.$$
This suffices to conclude that $\gamma_{N,d}:(0,N)^d\to(0,N^d)$ is measure preserving.
\end{proof}

Let $d,N\in\mathbb{N}.$ For given $f\in S(I),$ set
\begin{equation}\label{def-f_N}f_{N,d}(t_1,\cdots,t_d)=
	\begin{cases}
		\mu(\psi_{N,d}(t_1\cdots t_d),f),& t_1,\cdots,t_d\in(0,N),\\
		0,& \max(t_1,\dots,t_d)\geq N.
	\end{cases}
\end{equation}

As a preparatory step, we establish the relation between $f_{N,d}$
and the original function $f$ through their distribution functions.

\begin{lem}\label{d-mu fn compute lemma} Let $d\in \mathbb{N}$ and let $f_{N,d}$ be the function defined by \eqref{def-f_N}.  We have
	$$\mu(f_{N,d})=\mu(f)\chi_{(0,N^d)}.$$
\end{lem}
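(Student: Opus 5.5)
The plan is to write $f_{N,d}$ as a composition with a measure preserving map and read off its distribution function. On the cube $(0,N)^d$ we have
$$f_{N,d}=\mu(f)\circ\gamma_{N,d},$$
where $\gamma_{N,d}$ is the map of Lemma \ref{gamma_is_measure_preserving_d}. Outside the cube, $f_{N,d}$ vanishes. Since $\mu(f)$ is non-negative, $f_{N,d}\geq 0$ and $|f_{N,d}|=f_{N,d}$.

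\textbf{Distribution function.} For $s>0$ we compute $m(\{f_{N,d}>s\})$. Only points of $(0,N)^d$ contribute, so
$$\{f_{N,d}>s\}=\gamma_{N,d}^{-1}\big(\{u\in(0,N^d):\ \mu(u,f)>s\}\big).$$
By Lemma \ref{gamma_is_measure_preserving_d}, $\gamma_{N,d}$ is measure preserving: it preserves the measure of intervals $(0,a)$, hence of all Borel sets by the $\pi$--$\lambda$ theorem. Therefore
$$m(\{f_{N,d}>s\})=m\big(\{u\in(0,N^d):\ \mu(u,f)>s\}\big).$$
The right-hand side is exactly the distribution function of $\mu(f)\chi_{(0,N^d)}$ on $\mathbb{R}_+$.

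\textbf{Rearrangements.} Two functions with the same distribution function have the same decreasing rearrangement. Moreover, $\mu(f)\chi_{(0,N^d)}$ is itself non-increasing and right-continuous, so its rearrangement is itself. Combining these gives $\mu(f_{N,d})=\mu(f)\chi_{(0,N^d)}$.

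\textbf{Main obstacle.} The only delicate point is the measurability and preimage bookkeeping. The set $\{u:\mu(u,f)>s\}$ is an interval $(0,b)$ because $\mu(f)$ is non-increasing. Intersected with $(0,N^d)$ it becomes $(0,\min(b,N^d))$. So Lemma \ref{gamma_is_measure_preserving_d} applies directly with $a=\min(b,N^d)$, and the extension to general Borel sets is not actually needed. The boundary of the cube has measure zero and is irrelevant.
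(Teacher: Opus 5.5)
Your proof is correct and follows the paper's own argument: on $(0,N)^d$ you write $f_{N,d}=\mu(f)\circ\gamma_{N,d}$, use that $\gamma_{N,d}$ is measure preserving onto $(0,N^d)$, and note that $f_{N,d}$ vanishes off the cube. Your extra observation that the level sets of $\mu(f)$ are initial intervals $(0,b)$ is useful, since it lets Lemma \ref{gamma_is_measure_preserving_d} (which only controls preimages of intervals $(0,a)$) apply directly and fills in a step the paper leaves implicit.
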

\begin{proof} On $(0,N)^d$ we have
$$f_{N,d}=\mu(f)\circ\gamma_{N,d}=(\mu(f)\chi_{(0,N^d)})\circ\gamma_{N,d}.$$
As $\gamma_{N,d}:(0,N)^d\to(0,N^d),$ it follows that
$$\mu(f_{N,d}|_{(0,N)^d})=\mu((\mu(f)\chi_{(0,N^d)})\circ\gamma_{N,d})=\mu((\mu(f)\chi_{(0,N^d)}))=\mu(f)\chi_{(0,N^d)}.$$
Since $f_{N,d}=0$ outside $(0,N)^d,$ the assertion follows.
\end{proof}

The following lemma provides a reduction of the $d$-fold integral of a function depending on the product of variables to a one-dimensional integral via a suitable change of variables and scaling.
\begin{lem}\label{pre first main equality lemma} For any measurable function $F:\mathbb{R}_+\to\mathbb{R}_+$ such that the formula below makes sense we have
$$\int_0^{t_1}\cdots\int_0^{t_d}F(\prod_{k=1}^ds_k)\prod_{k=1}^dds_k=t\int_0^1(\sigma_{\frac1t}F\circ\psi_d^{-1})(u)du,\quad t=\prod_{k=1}^dt_k,$$
where $\psi_d$ is the function defined by \eqref{psi_d-function}. 
\end{lem}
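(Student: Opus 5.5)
The plan is to reduce the integral to the unit cube by an anisotropic dilation, and then to push the resulting integral forward along the measure preserving map of Lemma \ref{gamma_is_measure_preserving_d} with $N=1$. Write $t=\prod_{k=1}^dt_k$. Note that $(\sigma_{\frac1t}F)(x)=F(tx)$, so the right-hand side of the claim equals $t\int_0^1F\big(t\,\psi_d^{-1}(u)\big)\,du$.

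\emph{Step 1 (rescaling).} Substitute $s_k=t_kr_k$ with $r=(r_1,\dots,r_d)\in(0,1)^d$. The Jacobian is $\prod_k t_k=t$ and $\prod_k s_k=t\prod_k r_k$, so
$$\int_0^{t_1}\cdots\int_0^{t_d}F\Big(\prod_{k=1}^ds_k\Big)\prod_{k=1}^dds_k=t\int_{(0,1)^d}G\Big(\prod_{k=1}^dr_k\Big)\,dr,\qquad G:=\sigma_{\frac1t}F .$$
Since $F\ge0$ is measurable, Tonelli's theorem justifies all manipulations, with values in $[0,\infty]$.

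\emph{Step 2 (pushforward).} By Lemma \ref{gamma_is_measure_preserving_d} with $N=1$, the map $\gamma_{1,d}(r)=\psi_d(\prod_k r_k)$ sends $(0,1)^d$ to $(0,1)$ and is measure preserving. Here $\psi_{1,d}=\psi_d$, and Lemma \ref{first volume computation} gives the distribution of $\prod_k r_k$ directly. By Lemma \ref{primative-lemma}, $\psi_d$ is continuous and strictly increasing on $(0,1]$, with $\psi_d(0^+)=0$ and $\psi_d(1)=1$. Hence $\psi_d$ is a bijection of $(0,1)$ onto itself with a measurable (indeed continuous) inverse. This lets us write $G(\prod_k r_k)=(G\circ\psi_d^{-1})(\gamma_{1,d}(r))$. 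Applying the change-of-variables formula for the image measure, which holds for nonnegative measurable functions because $\gamma_{1,d}$ pushes Lebesgue measure on $(0,1)^d$ to Lebesgue measure on $(0,1)$, we get
$$\int_{(0,1)^d}G\Big(\prod_{k=1}^dr_k\Big)\,dr=\int_0^1(G\circ\psi_d^{-1})(u)\,du .$$

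\emph{Step 3.} Combining Steps 1 and 2 gives $t\int_0^1(\sigma_{\frac1t}F\circ\psi_d^{-1})(u)\,du$, which is the claim.

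The only point needing care is Step 2. Lemma \ref{gamma_is_measure_preserving_d} only checks preimages of intervals $(0,a)$. These generate the Borel $\sigma$-algebra on $(0,1)$ and form a $\pi$-system, so by the $\pi$–$\lambda$ theorem the two measures agree on all Borel sets. The integral identity then follows for indicators, then simple functions, then nonnegative measurable functions by monotone convergence.
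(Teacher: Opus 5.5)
Your proposal is correct and follows the paper's proof essentially verbatim. Both rescale to the unit cube via $s_k=t_ku_k$, rewrite the integrand as $(\sigma_{1/t}F\circ\psi_d^{-1})(\gamma_{1,d}(u))$, and integrate using that $\gamma_{1,d}$ is measure preserving (Lemma \ref{gamma_is_measure_preserving_d} with $N=1$). Your additional justifications — Tonelli for nonnegative $F$, and the $\pi$--$\lambda$ extension from the intervals $(0,a)$ to all Borel sets — are valid and supply details the paper leaves implicit.
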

\begin{proof} Substitute $s_k=t_ku_k$ for $1\leq k\leq d.$ We have
$$F(\prod_{k=1}^ds_k)=F(\prod_{k=1}^dt_k\cdot\prod_{k=1}^du_k)=F(t\cdot\prod_{k=1}^du_k)=(\sigma_{\frac1t}F)(\prod_{k=1}^du_k),$$
$$\prod_{k=1}^dds_k=\prod_{k=1}^dt_k\cdot\prod_{k=1}^ddu_k=t\cdot\prod_{k=1}^ddu_k.$$
Thus,
$$
\begin{aligned}
	&\int_0^{t_1}\cdots\int_0^{t_d}
	F\left(\prod_{k=1}^d s_k\right)
	\prod_{k=1}^d ds_k
	\\
	&\qquad
	=\left(\prod_{k=1}^d t_k\right)
	\int_0^1\cdots\int_0^1
	F\left(\prod_{k=1}^d t_k u_k\right)
	\prod_{k=1}^d du_k
	\\
	&\qquad
	=t\int_0^1\cdots\int_0^1
	(\sigma_{1/t}F)\left(\prod_{k=1}^d u_k\right)
	\prod_{k=1}^d du_k,
	\qquad
	t=\prod_{k=1}^d t_k.
\end{aligned}
$$
Since
$$
\psi_d(\prod_{k=1}^d u_k)
=\gamma_{1,d}(u_1,\ldots,u_d),
$$
and $\gamma_{1,d}:(0,1)^d\to(0,1)$ is measure-preserving by Lemma \ref{gamma_is_measure_preserving_d}, 
we may write
$$
\begin{aligned}
	 &\int_0^1\cdots\int_0^1
	(\sigma_{1/t}F)\left(\prod_{k=1}^d u_k\right)
	\prod_{k=1}^ddu_k
	\\
	&=
	\int_0^1\cdots\int_0^1
	(\sigma_{1/t}F\circ\psi_d^{-1})
	\left(\gamma_{1,d}(u_1,\ldots,u_d)\right)
	\prod_{k=1}^ddu_k.
	\\
	&=
	\int_0^1
	(\sigma_{1/t}F\circ\psi_d^{-1})(u)\,du.
\end{aligned}
$$
Consequently,
$$
	\int_0^{t_1}\cdots\int_0^{t_d}
	F\left(\prod_{k=1}^d s_k\right)
	\prod_{k=1}^d ds_k
	=
	t\int_0^1
	(\sigma_{1/t}F\circ\psi_d^{-1})(u)\,du,
	\quad
	t=\prod_{k=1}^d t_k.
$$
This completes the proof.
\end{proof}

The following lemma computes 
$C^{\otimes d}f_{N,d}$ explicitly, allowing us to reduce the problem to one-dimensional distribution functions.

\begin{lem}\label{first main equality lemma} Let $d\in \mathbb{N}$  and let $C^{\otimes d}$ be the operator defined by \eqref{Ces-d-tensor}. If $t_1,\cdots,t_d\in(0,N),$ then
$$\big(C^{\otimes d}f_{N,d}\big)(t_1,\cdots,t_d)=\frac{1}{t_1\cdots t_d}\int_{0}^{t_1\cdots t_d}\mu\big(\psi_{N,d}(v),f\big)\,\frac{\log^{d-1}\big(\frac{t_1\cdots t_d}{v}\big)}{(d-1)!}dv,$$
where $f_{N,d}$ and $\psi_{N,d}$ are the functions defined by \eqref{def-f_N} and \eqref{psi_N,d-function}, respectively.
\end{lem}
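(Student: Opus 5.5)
Since $t_k<N$ for every $k$, the box $(0,t_1)\times\cdots\times(0,t_d)$ lies inside $(0,N)^d$. On this box the definition \eqref{def-f_N} gives $f_{N,d}(s)=F(\prod_k s_k)$ with $F(u):=\mu(\psi_{N,d}(u),f)$, so
$$
\big(C^{\otimes d}f_{N,d}\big)(t_1,\dots,t_d)=\frac{1}{t}\int_0^{t_1}\cdots\int_0^{t_d}F\Big(\prod_{k=1}^ds_k\Big)\prod_{k=1}^dds_k,\qquad t=\prod_{k=1}^dt_k .
$$
Everything therefore reduces to a one-dimensional formula for the $d$-fold integral of a function of the product of the variables. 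The goal is to show it equals $\int_0^t F(v)\,\frac{\log^{d-1}(t/v)}{(d-1)!}\,dv$; dividing by $t$ then gives the claim.

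For this I would use Lemma \ref{pre first main equality lemma}, which rewrites the integral as $t\int_0^1 F(t\psi_d^{-1}(u))\,du$. In the inner integral I substitute $u=\psi_d(w)$ with $w\in(0,1)$. Here $\psi_d$ is a strictly increasing bijection of $(0,1)$ onto $(0,1)$ by Lemma \ref{primative-lemma}, since $\psi_d(0^+)=0$ and $\psi_d(1)=1$. Its derivative is $\psi_d'(w)=\log^{d-1}(1/w)/(d-1)!$. This gives $t\int_0^1F(tw)\frac{\log^{d-1}(1/w)}{(d-1)!}dw$. The further substitution $v=tw$ turns this into $\int_0^tF(v)\frac{\log^{d-1}(t/v)}{(d-1)!}dv$, as required.

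The change of variables is justified because $F\ge0$ is measurable and $\psi_d$ is absolutely continuous and monotone on $[0,1]$, so Tonelli applies and both sides are allowed to equal $+\infty$. As a cross-check, or as an alternative route avoiding Lemma \ref{pre first main equality lemma}, one can use the layer-cake form. The measure of $\{s\in\prod_k(0,t_k):\prod_k s_k<v\}$ equals $t\,\psi_d(v/t)$ by the dilation relation \eqref{A-set-dilation} and Lemma \ref{first volume computation}. Hence the pushforward of Lebesgue measure under $s\mapsto\prod_k s_k$ has density $\psi_d'(v/t)=\log^{d-1}(t/v)/(d-1)!$ on $(0,t)$.

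I expect no real obstacle. The only points needing care are confirming that $\psi_d$ maps $(0,1)$ onto $(0,1)$ monotonically, which holds for every $d$ on $(0,1]$ by Lemma \ref{primative-lemma}, and keeping track of the normalising factor $1/t$.
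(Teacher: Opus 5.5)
Your proposal is correct and follows essentially the paper's own proof: reduce via Lemma \ref{pre first main equality lemma} to $t\int_0^1 F(t\psi_d^{-1}(u))\,du$, substitute $u=\psi_d(w)$ with $\psi_d'(w)=\log^{d-1}(1/w)/(d-1)!$, then set $v=tw$ and divide by $t$. Your layer-cake cross-check is the same pushforward argument the paper uses for the adjoint operator in Lemma \ref{second main equality lemma2}.
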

\begin{proof} Denote for brevity $t=\prod\limits_{k=1}^dt_k.$ If $t_1,\cdots,t_d\in(0,N),$ then by \eqref{Ces-d-tensor} and \eqref{def-f_N} we have
$$(C^{\otimes d}f_{N,d})(t_1,\cdots,t_d)=\frac1{t_1\cdots t_d}\int_{0}^{t_1}\cdots\int_{0}^{t_d}\mu(\psi_{N,d}(\prod_{k=1}^ds_k),f)\prod_{k=1}^dds_k.$$
Denote for brevity
$$F_{N,d}:u\to \mu(\psi_{N,d}(u),f),\quad u\in(0,N^d).$$
We have
$$(C^{\otimes d}f_{N,d})(t_1,\cdots,t_d)=\frac1{t_1\cdots t_d}\int_{0}^{t_1}\cdots\int_{0}^{t_d}F_{N,d}(\prod_{k=1}^ds_k)\prod_{k=1}^dds_k.$$
Applying Lemma \ref{pre first main equality lemma} to the function $F_{N,d},$ we obtain
$$(C^{\otimes d}f_{N,d})(t_1,\cdots,t_d)= \int_0^1(\sigma_{\frac1t}F_{N,d}\circ\psi_d^{-1})(u)du.$$
Substituting $u=\psi_d(w),$ we write
\begin{eqnarray*}(C^{\otimes d}f_{N,d})(t_1,\cdots,t_d)&=& \int_0^1(\sigma_{\frac1t}F_{N,d})(w)\psi_d'(w)dw\\
	&=&\frac1{(d-1)!}\int_0^1F_{N,d}(wt)\log^{d-1}(\frac1w)dw\\
	&=&\frac1{(d-1)!t}\int_0^tF_{N,d}(v)\log^{d-1}(\frac{t}{v})dv.
\end{eqnarray*}
\end{proof}

We can now compute the decreasing rearrangement of $C^{\otimes d}f_{N,d}.$

\begin{lem}\label{main mu C lemma} Let $d\in \mathbb{N}$ and let $C^{\otimes d}$ be the operator defined by \eqref{Ces-d-tensor}. For $t\in(0,N^d),$ we have
$$\mu(t,C^{\otimes d}f_{N,d})=\frac1{\psi_{N,d}^{-1}(t)}\int_0^{\psi_{N,d}^{-1}(t)}\mu(\psi_{N,d}(v),f)\frac{\log^{d-1}(\frac{\psi_{N,d}^{-1}(t)}{v})}{(d-1)!}dv,\quad t\in(0,N^d),$$
where $f_{N,d},$ $\psi_{N,d}$ are the functions defined by \eqref{def-f_N} and \eqref{psi_N,d-function}, respectively.
\end{lem}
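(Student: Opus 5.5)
The plan is to view $C^{\otimes d}f_{N,d}$ as a function of the single variable $\tau=t_1\cdots t_d$ on the cube $(0,N)^d$, show that this function is non-increasing in $\tau$, and then transport it to $(0,N^d)$ with the measure-preserving map $\gamma_{N,d}$ of Lemma \ref{gamma_is_measure_preserving_d}. A rearrangement of a non-increasing function composed with a measure-preserving map is the function itself, and this gives the formula. Throughout I read $C^{\otimes d}f_{N,d}$ the way Lemma \ref{first main equality lemma} treats it: as a function on the cube $(0,N)^d$, where $f_{N,d}$ lives. This reading is essential (see the last paragraph).

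\textbf{Step 1: reduction to one variable.} Put $F(v)=\mu(\psi_{N,d}(v),f)$ for $v\in(0,N^d)$ and
$$G(\tau)=\frac1\tau\int_0^\tau F(v)\frac{\log^{d-1}(\frac{\tau}{v})}{(d-1)!}\,dv,\qquad \tau\in(0,N^d).$$
Lemma \ref{first main equality lemma} then gives $C^{\otimes d}f_{N,d}(t_1,\dots,t_d)=G(t_1\cdots t_d)$ on $(0,N)^d$.

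\textbf{Step 2: monotonicity of $G$.} Substitute $v=\tau w$ to write
$$G(\tau)=\int_0^1F(\tau w)\frac{\log^{d-1}(\frac1w)}{(d-1)!}\,dw.$$
The weight is a probability density on $(0,1)$, since $\psi_d(1)=1$ by Lemma \ref{primative-lemma}. The function $F$ is non-increasing, because $\psi_{N,d}$ is increasing on $(0,N^d)$ and $\mu(f)$ is non-increasing. Hence $G\ge 0$ is non-increasing on $(0,N^d)$. Since $\mu(f)$ is right-continuous and $\psi_{N,d}$ is continuous, $F$ is right-continuous, and dominated convergence makes $G$ right-continuous as well. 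Finiteness of $G$ comes from $\mu(f)\in{\rm dom}(S^d)$, as in the definition of $C^d$.

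\textbf{Step 3: transport by $\gamma_{N,d}$.} On $(0,N)^d$ we have
$$C^{\otimes d}f_{N,d}=G\circ p_d=(G\circ\psi_{N,d}^{-1})\circ\gamma_{N,d}.$$
By Lemma \ref{gamma_is_measure_preserving_d}, $\gamma_{N,d}\colon(0,N)^d\to(0,N^d)$ is measure preserving, so $C^{\otimes d}f_{N,d}$ and $G\circ\psi_{N,d}^{-1}$ have the same distribution function. The function $G\circ\psi_{N,d}^{-1}$ is non-negative, non-increasing and right-continuous on $(0,N^d)$, because $\psi_{N,d}^{-1}$ is a continuous increasing bijection of $(0,N^d)$. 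It therefore coincides with its own decreasing rearrangement. Hence
$$\mu(t,C^{\otimes d}f_{N,d})=G(\psi_{N,d}^{-1}(t)),\qquad t\in(0,N^d),$$
which is exactly the stated formula.

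\textbf{Main obstacle.} The delicate point is the identification of the domain. If $C^{\otimes d}f_{N,d}$ were considered on all of $\mathbb{R}_+^d$, the values at points with some $t_j\ge N$ would also be positive. Writing $t'_k=\min(t_k,N)$, these values are $\frac{\prod_k t'_k}{\prod_k t_k}\,G(\prod_k t'_k)$, so they are still dominated by the extension of $G$ by $F=0$ beyond $N^d$. They are nonetheless positive on a set of infinite measure, and could only increase $\mu$. So the equality must be read for the function on $(0,N)^d$, as stated in Lemma \ref{first main equality lemma}. On the full quadrant one gets only the inequality "$\ge$", which is the direction needed for Theorem \ref{c tensor power thm}.
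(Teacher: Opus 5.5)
Your argument is correct and is essentially the paper's proof with the details filled in. The paper also observes that the one-variable function $G$ is decreasing and that $\psi_{N,d}^{-1}$ is increasing, and then concludes via the measure-preserving map $\gamma_{N,d}$ of Lemma \ref{gamma_is_measure_preserving_d}. Your domain remark is accurate as well: the equality holds for the restriction of $C^{\otimes d}f_{N,d}$ to $(0,N)^d$, which is what the paper's proof actually computes, while on all of $\mathbb{R}^d_+$ one only gets $\geq$, and that is the direction used in Theorem \ref{c tensor power thm} and Theorem \ref{main thm}(ii).
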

\begin{proof} The function
$$t\to \frac1t\int_0^t\mu(\psi_{N,d}(v,f))\frac{\log^{d-1}(\frac{t}{v})}{(d-1)!}dv,\quad t\in(0,N^d),$$
is decreasing. The function $t\to\psi_{N,d}^{-1}(t),$ $t\in(0,N^d),$ is increasing. The assertion follows now from Lemma \ref{gamma_is_measure_preserving_d}.
\end{proof}
	
\begin{lem}\label{second main equality lemma2} If $t_1,\cdots,t_d\in[0,N],$ then
$$\int_{t_1}^{\infty}\cdots\int_{t_d}^{\infty}\mu(\psi_{N,d}(s_1\cdots s_d),f)\frac{ds_1\cdots ds_d}{s_1\cdots s_{d}}=\int_{t_1\cdots t_d}^{N^d}\frac{\mu(\psi_{N,d}(v),f)}{v}\frac{\log^{d-1}\big(\frac{v}{t_1\cdots t_{d}}\big)}{(d-1)!}dv,$$
where $\psi_{N,d}$ is the function defined by  \eqref{psi_N,d-function}.
\end{lem}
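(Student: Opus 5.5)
The plan is to prove a general change-of-variables identity for functions of the product $s_1\cdots s_d$, in the spirit of Lemma \ref{pre first main equality lemma}, and then specialise it to the function defining $f_{N,d}$.

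\textbf{Cutoff convention.} First I fix how the integrand is read. Following \eqref{def-f_N}, $v\mapsto\mu(\psi_{N,d}(v),f)$ is understood as the function $G_{N,d}(v):=\mu(\psi_{N,d}(v),f)\chi_{(0,N^d)}(v)$. This is the only reading under which the upper limit $N^d$ on the right-hand side is consistent, since for $v>N^d$ the function $\psi_{N,d}$ is no longer a bijection onto $(0,N^d)$. I will say explicitly that the identity is meant for this truncated integrand.

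\textbf{General identity.} I would establish, for any measurable $G:\mathbb{R}_+\to\mathbb{R}_+$ and any $t_1,\dots,t_d>0$ with $t=\prod_{k=1}^d t_k$,
\begin{equation*}
\int_{t_1}^{\infty}\cdots\int_{t_d}^{\infty}G\Big(\prod_{k=1}^d s_k\Big)\prod_{k=1}^d\frac{ds_k}{s_k}=\int_{t}^{\infty}\frac{G(v)}{v}\,\frac{\log^{d-1}(\frac{v}{t})}{(d-1)!}\,dv.
\end{equation*}
Both sides are integrals of non-negative functions, so Tonelli's theorem justifies every step.

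\textbf{Proof of the identity.} Substitute $s_k=t_ke^{x_k}$ with $x_k\in(0,\infty)$, so that $ds_k/s_k=dx_k$. The left-hand side becomes
\begin{equation*}
\int_{(0,\infty)^d}G\big(t\,e^{x_1+\cdots+x_d}\big)\,dx_1\cdots dx_d .
\end{equation*}
This integrand depends only on $r=x_1+\cdots+x_d$. The $(d-1)$-dimensional measure of the slice $\{x\in(0,\infty)^d:\ \sum_k x_k=r\}$ (its density with respect to $dr$) equals $r^{d-1}/(d-1)!$. This follows because the simplex $\{x\geq0,\ \sum_k x_k<r\}$ has volume $r^d/d!$, and differentiating in $r$ gives the slice density. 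The left-hand side therefore equals $\int_0^\infty G(te^r)\,\frac{r^{d-1}}{(d-1)!}\,dr$. Finally substitute $v=te^r$, so $dr=dv/v$ and $r=\log(v/t)$, which gives the right-hand side of the identity.

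If one prefers to stay within the paper's style, the same identity can be proved by induction on $d$, exactly as in Lemma \ref{second volume computation}. Integrate out $s_d$, apply the induction hypothesis with $t$ replaced by $t_1\cdots t_{d-1}$ and $G$ replaced by $G(\,\cdot\,s_d)$, then swap the order of integration. The resulting inner integral is $\int_{t_d}^{v/(t_1\cdots t_{d-1})}\log^{d-2}\!\big(\frac{v}{t_1\cdots t_{d-1}s_d}\big)\frac{ds_d}{s_d}=\frac{\log^{d-1}(v/t)}{d-1}$, which produces the factor $\log^{d-1}(v/t)/(d-1)!$.

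\textbf{Specialisation.} Apply the identity with $G=G_{N,d}$. Because $G_{N,d}$ vanishes for $v\ge N^d$, the upper limit reduces to $N^d$, which gives the statement of the lemma. When $t\geq N^d$ both sides vanish, so the endpoint case $t_k=N$ is harmless. The only real obstacle is the cutoff convention above; the computation itself is routine.
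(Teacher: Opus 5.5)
Your argument is correct, but you obtain the key density differently from the paper. The paper keeps Lebesgue measure $ds_1\cdots ds_d$ and absorbs the weight into $g_{N,d}(v)=v^{-1}\mu(\psi_{N,d}(v),f)\chi_{(0,N^d)}(v)$. It then pushes forward under the product map and reads off the distribution function $m(B_{d,t}(v))=(-1)^{d-1}t_1\cdots t_d\bigl(\psi_d(\frac{v}{t_1\cdots t_d})-1\bigr)$ from the dilation identity \eqref{B-set-dilation} and Lemma \ref{second volume computation}. After differentiating via Lemma \ref{primative-lemma}, the sign $(-1)^{d-1}$ cancels against $\log^{d-1}(t_1\cdots t_d/v)$.

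You instead push forward the multiplicative measure $\prod_k ds_k/s_k$, which becomes Lebesgue measure on the orthant under $s_k=t_ke^{x_k}$. The density is then just the derivative $r^{d-1}/(d-1)!$ of the simplex volume. Your alternative induction gives the same result, and you computed its inner integral correctly.

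What each buys: your version is self-contained, holds for arbitrary non-negative $G$, and never needs $\psi_d$ on $(1,\infty)$, where it is not monotone for even $d$. The paper's version is shorter in context because it recycles the volume lemmas and keeps everything phrased through $\psi_d$.

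Your cutoff convention is exactly what the paper does implicitly by inserting $\chi_{(0,N^d)}$ into $g_{N,d}$. It is genuinely required: for odd $d$ the literal integrand need not vanish where $s_1\cdots s_d>N^d$, and for even $d$ it is eventually undefined since $\psi_d$ becomes negative.

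The only loose end is $t_k=0$, which the statement allows but your identity excludes. Letting $t_k\downarrow0$ and applying monotone convergence to both sides of your identity shows the two sides still agree in $[0,\infty]$. The paper's rescaling by $t_1\cdots t_d$ also tacitly assumes $t_k>0$.
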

\begin{proof} Denote for brevity
$$g_{N,d}(s):=\frac1s\mu\big(\psi_{N,d}(s),f\big)\,\chi_{(0,N^d)}(s),\quad s>0.$$
For $t=(t_1,\dots,t_d)\in\mathbb{R}^d_+$ and $v\in\mathbb{R}_+,$ recall the notation
$$B_{d,t}(v)=\Big\{(s_1,\cdots,s_d):\ s_k\geq t_k,\quad 1\leq k\leq d,\quad s_1\cdots s_d\leq v\Big\}.$$
We have
$$\int_{t_1}^{\infty}\cdots\int_{t_d}^{\infty}\mu(\psi_{N,d}(s_1\cdots s_d),f)\frac{ds_1\cdots ds_d}{s_1\cdots s_{d}}=\int_{t_1\cdots t_d}^{\infty}g_{N,d}(v)dm(B_{d,t}(v)),$$
where $m(B_{d,t}(v))$ is the Lebesgue measure of the set $B_{d,t}(v).$
Indeed, observe that the integrand can be written as a function of the single variable
$$
v=s_1\cdots s_d:
\quad
\mu\bigl(\psi_{N,d}(s_1\cdots s_d),f\bigr)
\frac{ds_1\cdots ds_d}{s_1\cdots s_d}
=
g_{N,d}(s_1\cdots s_d)\,ds_1\cdots ds_d.
$$

Therefore, by the push-forward change of variables formula for the map
$$
(s_1,\ldots,s_d)\mapsto s_1\cdots s_d,
$$
we obtain
\begin{eqnarray*}
	& &\int_{t_1}^{\infty}\cdots\int_{t_d}^{\infty}
	\mu\bigl(\psi_{N,d}(s_1\cdots s_d),f\bigr)
	\frac{ds_1\cdots ds_d}{s_1\cdots s_d}
	\\
	&=&
	\int_{\mathbb R_+^d}
	g_{N,d}(s_1\cdots s_d)
	\prod_{k=1}^d\chi_{[t_k,\infty)}(s_k)
	\,ds_1\cdots ds_d
	\\
	&
	=&
	\int_{t_1\cdots t_d}^{\infty}
	g_{N,d}(v)\,
	d\bigl(m(B_{d,t}(v))\bigr).
\end{eqnarray*}
Here, the lower limit is \(t_1\cdots t_d\), since
$$
s_k\geq t_k
\quad (1\leq k\leq d)
\quad\Longrightarrow\quad
s_1\cdots s_d\geq t_1\cdots t_d,
$$
and  $g_{N,d}$ is nonnegative measurable function.
This proves the claimed equality.
On the other hand, we have
\begin{eqnarray*}
m(B_{d,t}(v)) &\stackrel{\eqref{B-set-dilation}}{= }&p_d(t) \cdot m\Big(B_{d,1}(\frac{v}{p_d(t)})\Big)\\
&\stackrel{L.\ref{second volume computation}}{=}&(-1)^{d-1}t_1t_2\cdots t_d\times\Big(\psi_d(\frac{v}{t_1t_2\cdots t_d})-1\Big),
\end{eqnarray*}
where $p_d(t)=t_1t_2\cdots t_d.$
Hence,
$$\frac{d}{dv}m(B_{d,t}(v))=(-1)^{d-1}\psi_d'(\frac{v}{t_1t_2\cdots t_d})\stackrel{L.\ref{primative-lemma}}{=}\frac{\log^{d-1}(\frac{v}{t_1t_2\cdots t_d})}{(d-1)!}.$$
It follows that
$$\int_{t_1}^{\infty}\cdots\int_{t_d}^{\infty}\mu(\psi_{N,d}(s_1\cdots s_d),f)\frac{ds_1\cdots ds_d}{s_1\cdots s_{d}}$$
$$=\frac1{(d-1)!}\int_{t_1\cdots t_d}^{\infty}g_{N,d}(v)\log^{d-1}(\frac{v}{t_1t_2\cdots t_d})dv.$$
The assertion follows now from the definition of $g_{N,d}.$
\end{proof}
The following lemma computes $(C^{\ast})^{\otimes d}f_{N,d}$ explicitly, allowing us to reduce the problem to one-dimensional setting, which is the key relation to prove Theorem \ref{c tensor power thm}.
\begin{lem}\label{second main equality lemma} Let $(C^{\ast})^{\otimes d}$ be the operator defined by \eqref{Ces-adjoint-d-tensor}. If $t_1,\cdots,t_d\in(0,N),$ then
\begin{eqnarray*}&\big((C^{\ast})^{\otimes d}f_{N,d}\big)(t_1,\cdots,t_d)\\
&=\sum_{S\subsetneq\{1,\cdots,d\}}
(-1)^{|S|}\int_{N^{|S|}\prod_{i\notin S}t_i}^{N^d}\frac{\mu(\psi_{N,d}(v),f)}{v}\frac{\log^{d-1}\big(\frac{v}{N^{|S|}\prod_{i\notin S}t_i}\big)}{(d-1)!}dv,
\end{eqnarray*}
where $f_{N,d}$ and $\psi_{N,d}$ are the functions defined by \eqref{def-f_N} and \eqref{psi_N,d-function}, respectively.
\end{lem}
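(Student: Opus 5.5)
The plan is inclusion--exclusion over the box constraints $s_i<N$, so that every term reduces to Lemma \ref{second main equality lemma2}. Since $f_{N,d}$ vanishes unless all $s_k<N$, on $(0,N)^d$ we have
$$\big((C^{\ast})^{\otimes d}f_{N,d}\big)(t_1,\dots,t_d)=\int_{t_1}^{N}\cdots\int_{t_d}^{N}\mu(\psi_{N,d}(s_1\cdots s_d),f)\frac{ds_1\cdots ds_d}{s_1\cdots s_d}.$$
To handle the upper limits $N$, introduce the auxiliary function $G(s)=\mu(\psi_{N,d}(s_1\cdots s_d),f)\chi_{(0,N^d)}(s_1\cdots s_d)/(s_1\cdots s_d)$ on all of $\mathbb{R}_+^d$. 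This is exactly the integrand appearing in Lemma \ref{second main equality lemma2}, since there $g_{N,d}$ already contains $\chi_{(0,N^d)}$. On $(0,N)^d$ the function $G$ coincides with the integrand above.

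For each coordinate write $\chi_{[t_i,N)}=\chi_{[t_i,\infty)}-\chi_{[N,\infty)}$ and expand the product $\prod_{i=1}^d\chi_{[t_i,N)}(s_i)$. This gives
$$\int_{\prod[t_i,N)}G=\sum_{S\subseteq\{1,\dots,d\}}(-1)^{|S|}\int_{\prod_{i\in S}[N,\infty)\times\prod_{i\notin S}[t_i,\infty)}G,$$
which is legitimate provided each integral is finite. Each term is a tail integral of the type in Lemma \ref{second main equality lemma2}, with lower corner $\tau^S$ given by $\tau^S_i=N$ for $i\in S$ and $\tau^S_i=t_i$ otherwise, and $\tau^S\in(0,N]^d$. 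The lemma then produces
$$\int_{N^{|S|}\prod_{i\notin S}t_i}^{N^d}\frac{\mu(\psi_{N,d}(v),f)}{v}\frac{\log^{d-1}\big(v/(N^{|S|}\prod_{i\notin S}t_i)\big)}{(d-1)!}\,dv.$$
The term with $S=\{1,\dots,d\}$ has lower limit $N^d$ equal to the upper limit, so it vanishes. This leaves exactly the sum over $S\subsetneq\{1,\dots,d\}$ with signs $(-1)^{|S|}$, as claimed.

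The main obstacle is justifying the expansion, which requires the finiteness of every term. On the region of integration each term is bounded by the $S=\emptyset$ term, since its domain is contained in $\prod[t_i,\infty)$ and $G\ge0$. That term equals
$$\frac1{(d-1)!}\int_{t}^{N^d}\frac{\mu(\psi_{N,d}(v),f)}{v}\log^{d-1}\Big(\frac vt\Big)dv,\qquad t=t_1\cdots t_d>0.$$
Here the integration runs over a compact range $[t,N^d]$ bounded away from $0$, and $\mu(\psi_{N,d}(v),f)\le\mu(\psi_{N,d}(t),f)<\infty$. Since $\mu(f)\in{\rm dom}(S^d)$, $\mu(f)$ is finite at every positive point. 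So all terms are finite, and the inclusion--exclusion is valid by linearity of the Lebesgue integral for nonnegative functions with finite integrals.

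A minor check remains. Lemma \ref{second main equality lemma2} is stated for corners in $[0,N]^d$, and our corners have some coordinates equal to $N$, so they are covered.
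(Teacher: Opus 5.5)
Your proof is correct and follows the paper's route: inclusion--exclusion over the constraints $s_i<N$, dropping the term $S=\{1,\dots,d\}$, and evaluating each remaining term with Lemma \ref{second main equality lemma2}. Your auxiliary function $G$, cut off by $\chi_{(0,N^d)}(s_1\cdots s_d)$ rather than by the cube, is the correct object here and matches the $g_{N,d}$ used in the proof of that lemma; the paper writes $f_{N,d}$ inside the inclusion--exclusion, which read literally would make every term with $S\neq\emptyset$ vanish, and your finiteness check supplies a justification that the paper omits.
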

\begin{proof} By definition,
$$\big((C^{\ast})^{\otimes d}f_{N,d}\big)(t_1,\cdots,t_d)=\int_{t_1}^N\cdots\int_{t_d}^Nf_{N,d}(s_1,\cdots,s_d)\frac{ds}{s_1\cdots s_d}.$$
Using inclusion-exclusion principle, we write
\begin{eqnarray*}
	& &\int_{t_1}^N\cdots\int_{t_d}^N
	f_{N,d}(s_1,\ldots,s_d)
	\frac{ds_1\cdots ds_d}{s_1\cdots s_d}\\
	&=&
	\int_{t_1}^{\infty}\cdots\int_{t_d}^{\infty}
	f_{N,d}(s_1,\ldots,s_d)
	\frac{ds_1\cdots ds_d}{s_1\cdots s_d}
	\\
	&-&\int_N^{\infty}\int_{t_2}^{\infty}\cdots\int_{t_d}^{\infty}
	f_{N,d}(s_1,\ldots,s_d)
	\frac{ds_1\cdots ds_d}{s_1\cdots s_d}
	\\
	&-&\int_{t_1}^{\infty}\int_N^{\infty}\int_{t_3}^{\infty}
	\cdots\int_{t_d}^{\infty}
	f_{N,d}(s_1,\ldots,s_d)
	\frac{ds_1\cdots ds_d}{s_1\cdots s_d}
	\\
	&\quad\cdots
	\\
	&-&\int_{t_1}^{\infty}\cdots\int_{t_{d-1}}^{\infty}
	\int_N^{\infty}
	f_{N,d}(s_1,\ldots,s_d)
	\frac{ds_1\cdots ds_d}{s_1\cdots s_d}
	\\
	&+&\int_N^{\infty}\int_N^{\infty}
	\int_{t_3}^{\infty}\cdots\int_{t_d}^{\infty}
	f_{N,d}(s_1,\ldots,s_d)
	\frac{ds_1\cdots ds_d}{s_1\cdots s_d}
	\\
	&+&\int_N^{\infty}\int_{t_2}^{\infty}\int_N^{\infty}
	\int_{t_4}^{\infty}\cdots\int_{t_d}^{\infty}
	f_{N,d}(s_1,\ldots,s_d)
	\frac{ds_1\cdots ds_d}{s_1\cdots s_d}
	\\
	&\quad\cdots
	\\
	&+&\int_{t_1}^{\infty}\cdots\int_{t_{d-2}}^{\infty}
	\int_N^{\infty}\int_N^{\infty}
	f_{N,d}(s_1,\ldots,s_d)
	\frac{ds_1\cdots ds_d}{s_1\cdots s_d}
	\\
	&-&\int_N^{\infty}\int_N^{\infty}\int_N^{\infty}
	\int_{t_4}^{\infty}\cdots\int_{t_d}^{\infty}
	f_{N,d}(s_1,\ldots,s_d)
	\frac{ds_1\cdots ds_d}{s_1\cdots s_d}
	\\
	&\quad \cdots&
	\\
	&+&(-1)^d
	\int_N^{\infty}\cdots\int_N^{\infty}
	f_{N,d}(s_1,\ldots,s_d)
	\frac{ds_1\cdots ds_d}{s_1\cdots s_d}\\
&=&\sum\limits_{S\subseteq\{1,\cdots,d\}}
(-1)^{|S|}
\int\limits_{\prod_{i=1}^d A_i(S)}
f_{N,d}(s_1,\cdots,s_d)\frac{ds}{s_1\cdots s_d},
\end{eqnarray*}
where
$$A_i(S)=
\begin{cases}
[N,\infty), & i\in S,\\
[t_i,\infty), & i\notin S.
\end{cases}.$$
Therefore,
$$
	\begin{aligned}
		&\int_{t_1}^N\cdots\int_{t_d}^N
		f_{N,d}(s_1,\ldots,s_d)
		\frac{ds_1\cdots ds_d}{s_1\cdots s_d}
		\\
		&\quad=
		\sum_{S\subseteq\{1,\ldots,d\}}
		(-1)^{|S|}
		\int_{\prod_{i=1}^d A_i(S)}
		f_{N,d}(s_1,\ldots,s_d)
		\frac{ds_1\cdots ds_d}{s_1\cdots s_d}.
\end{aligned}
$$
Here,
$$
A_i(S)=
\begin{cases}
	[N,\infty),& i\in S,\\[2mm]
	[t_i,\infty),& i\notin S.
\end{cases}
$$
Since the summand corresponding to $S=\{1,\cdots,d\}$ is zero because $f_{N,d}$ vanishes on $(N^d,\infty)$ (see \eqref{def-f_N}), it follows that
$$
\begin{aligned}
	&\int_{t_1}^N\cdots\int_{t_d}^N
	f_{N,d}(s_1,\ldots,s_d)
	\frac{ds_1\cdots ds_d}{s_1\cdots s_d}
	\\
	&\quad=
	\sum_{S\subsetneq\{1,\ldots,d\}}
	(-1)^{|S|}
	\int_{\prod_{i=1}^d A_i(S)}
	f_{N,d}(s_1,\ldots,s_d)
	\frac{ds_1\cdots ds_d}{s_1\cdots s_d}.
\end{aligned}
$$
By Lemma \ref{second main equality lemma2}, we have
$$	\int\limits_{\prod_{i=1}^d A_i(S)}
f_{N,d}(s_1,\ldots,s_d)
\frac{ds_1\cdots ds_d}{s_1\cdots s_d}=\int_{N^{|S|}\prod_{i\notin S}t_i}^{N^d}\frac{\mu(\psi_{N,d}(v),f)}{v}\frac{\log^{d-1}\big(\frac{v}{N^{|S|}\prod_{i\notin S}t_i}\big)}{(d-1)!}dv$$
for any $S\subsetneq\{1,\ldots,d\}.$ Combining this with the previous one, we complete the proof.
\end{proof}

\begin{lem}\label{main mu Cast lemma} For $S\subsetneq\{1,\cdots,N\}$ and for every $(t_1,\cdots,t_d)\in(0,N)^d $ consider the function 
$$F_{N,S}:(t_1,\cdots,t_d)\to \int_{N^{|S|}\prod_{i\notin S}t_i}^{N^d}\frac{\mu(\psi_{N,d}(v),f)}{v}\frac{\log^{d-1}\big(\frac{v}{N^{|S|}\prod_{i\notin S}t_i}\big)}{(d-1)!}dv.$$
 We have
$$\mu(t,F_{N,S})=\int_{N^{|S|}\psi_{N,d-|S|}^{-1}(\frac{t}{N^{|S|}})}^{N^d}\frac{\mu(\psi_{N,d}(v),f)}{v}\frac{\log^{d-1}\big(\frac{v}{N^{|S|}\psi_{N,d-|S|}^{-1}(\frac{t}{N^{|S|}})}\big)}{(d-1)!}dv$$
for $t\in(0,N^d).$
\end{lem}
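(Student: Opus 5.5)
The argument mirrors Lemma \ref{main mu C lemma}: write $F_{N,S}$ as a decreasing function of a single variable composed with a measure-preserving map. Put $k=d-|S|\ge 1$ and
$$G_{N,S}(w):=\int_{w}^{N^d}\frac{\mu(\psi_{N,d}(v),f)}{v}\frac{\log^{d-1}\big(\frac{v}{w}\big)}{(d-1)!}dv,\quad w\in(0,N^d).$$
Then $F_{N,S}(t_1,\dots,t_d)=G_{N,S}\big(N^{|S|}\prod_{i\notin S}t_i\big)$. The function $G_{N,S}$ is non-increasing in $w$: as $w$ grows, the interval of integration shrinks and the kernel $\log^{d-1}(v/w)$ decreases, while the integrand stays nonnegative. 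This requires $d-1\ge 0$, which holds, and $\log(v/w)\ge0$ on the range $v\ge w$.

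Since $F_{N,S}$ does not depend on the coordinates $t_i$ with $i\in S$, set $u=\prod_{i\notin S}t_i$ with $(t_i)_{i\notin S}\in(0,N)^k$. By Lemma \ref{gamma_is_measure_preserving_d} applied in dimension $k$, the map $(t_i)_{i\notin S}\mapsto\psi_{N,k}(u)$ is measure preserving from $(0,N)^k$ onto $(0,N^k)$. Writing $F_{N,S}=G_{N,S}\big(N^{|S|}\psi_{N,k}^{-1}(\gamma_{N,k}(\cdot))\big)$, the function $r\mapsto G_{N,S}(N^{|S|}\psi_{N,k}^{-1}(r))$ is non-increasing on $(0,N^k)$, because $\psi_{N,k}^{-1}$ is increasing. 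Its rearrangement on $(0,N^k)$ is therefore itself, up to a.e.\ equality and right-continuity. This gives the distribution of $F_{N,S}$ as a function of the $k$ variables.

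Next bring in the $|S|$ free coordinates, each ranging over $(0,N)$. On $(0,N)^d$ we have $F_{N,S}=h\circ\gamma_{N,k}\circ\pi$, where $\pi$ is the projection onto the coordinates outside $S$ and $h(r)=G_{N,S}(N^{|S|}\psi_{N,k}^{-1}(r))$. Each level set of $F_{N,S}$ therefore has measure $N^{|S|}$ times the measure of the corresponding level set of $h$. Consequently $\mu(t,F_{N,S})=\mu(t/N^{|S|},h)=h(t/N^{|S|})$ for $t\in(0,N^{|S|}\cdot N^k)=(0,N^d)$. Substituting back gives
$$\mu(t,F_{N,S})=G_{N,S}\Big(N^{|S|}\psi_{N,d-|S|}^{-1}\big(\tfrac{t}{N^{|S|}}\big)\Big),$$
which is exactly the stated formula.

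The main point to check is the dilation step, namely that $\mu(t,h\circ\mathrm{pr})=\mu(t/N^{|S|},h)$ for a function depending only on $k$ of the coordinates of a cube of side $N$. This follows from Fubini applied to the distribution function: $m\{F_{N,S}>\lambda\}=N^{|S|}m\{h>\lambda\}$. A second check is that $G_{N,S}$ is finite, which follows from $\mu(f)\in{\rm dom}(S^d)$ together with the truncation at $N^d$, so that the monotone rearrangement identity is legitimate. The case $S=\emptyset$ is covered by the same argument with $k=d$. (The statement's $S\subsetneq\{1,\dots,N\}$ should read $\{1,\dots,d\}$.)
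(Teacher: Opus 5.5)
Your proof is correct and follows essentially the same route as the paper: you write $F_{N,S}$ as the non-increasing function $w\mapsto\int_w^{N^d}\cdots\,dv$ composed with $N^{|S|}\prod_{i\notin S}t_i$, get the distribution of the product from the measure-preserving map $\gamma_{N,d-|S|}$ of Lemma~\ref{gamma_is_measure_preserving_d}, and pick up the factor $N^{|S|}$ from the free coordinates. Your bookkeeping is actually cleaner than the paper's, which asserts along the way that $\{\prod_{i\notin S}t_i\le u\}\subset(0,N)^{d-|S|}$ has measure $u$ (the correct value is $\psi_{N,d-|S|}(u)$), though its final equimeasurability statement agrees with yours.
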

\begin{proof}
	Put
		$$
	r=|S|,\quad m=d-|S|.
	$$
	Since $S\subsetneq\{1,\ldots,d\},$ we have $m\geq1.$ Define
		$$
	H_r(u)
	:=
	\int_u^{N^d}
	\frac{\mu(\psi_{N,d}(v),f)}{v}
	\frac{\log^{d-1}(v/u)}{(d-1)!}\,dv,
	\quad 0<u<N^d.
	$$
	Then, by the definition of $F_{N,S},$
		$$
	F_{N,S}(t_1,\ldots,t_d)
	=
	H_r\left(
	N^r\prod_{i\notin S}t_i
	\right).
	$$
It can be shown that $H_r$ is decreasing.  Now define
		$$
	P_S(t_1,\ldots,t_d)
	=
	N^r\prod_{i\notin S}t_i.
	$$
	Then,	
	$$
	F_{N,S}=H_r\circ P_S.
	$$
We next determine the distribution of $P_S.$ Since $P_S$ does not depend on the coordinates $t_i,$ $i\in S,$ for $0<u<N^m$ we have
		$$
	\begin{aligned}
		&m\left(
		\left\{
		(t_1,\ldots,t_d)\in(0,N)^d:
		P_S(t_1,\ldots,t_d)\leq N^r u
		\right\}
		\right)
		\\
		&\quad
		=
		N^r
		m\left(
		\left\{
		(t_i)_{i\notin S}\in(0,N)^m:
		\prod_{i\notin S}t_i\leq u
		\right\}
		\right).
	\end{aligned}
	$$
	After the change of variables
	$$
	t_i=Nx_i,\qquad 0<x_i<1,
	$$
	we obtain
		$$
	\begin{aligned}
		&m\left(
		\left\{
		(t_i)_{i\notin S}\in(0,N)^m:
		\prod_{i\notin S}t_i\leq u
		\right\}
		\right)
		\\
		&\qquad
		=
		N^m
		m\left(
		\left\{
		(x_i)_{i\notin S}\in(0,1)^m:
		\prod_{i\notin S}x_i\leq\frac{u}{N^m}
		\right\}
		\right).
	\end{aligned}
	$$
	Since $	\gamma_{1,m}$ is measure-preserving by Lemma \ref{gamma_is_measure_preserving_d}, we have
		$$
	m\left(
	\left\{
	x\in(0,1)^m:
	\gamma_{1,m}(x)\leq y
	\right\}
	\right)
	=y,
	\qquad 0<y<1.
	$$
	Therefore,
		$$
	m\left(
	\left\{
	(t_i)_{i\notin S}\in(0,N)^m:
	\prod_{i\notin S}t_i\leq u
	\right\}
	\right)
	=u.
	$$
	Consequently,
		$$
		m\left(
		\left\{
		t\in(0,N)^d:
		P_S(t)\leq N^r u
		\right\}
		\right)
		=N^r u.
	$$
	Thus, \(P_S\) has the same distribution as the identity function on \((0,N^d)\).
	More precisely, the measure-preserving map
	$$
	\gamma_{1,m}:(0,1)^m\rightarrow(0,1)
	$$
	implies that
	$$
	\psi_{N,m}^{-1}:(0,N^m)\rightarrow(0,N)^m
	$$
	gives the corresponding quantile representation of the product variable. Hence,
	$$
	P_S
	\quad\text{and}\quad
	N^r\psi_{N,m}^{-1}\left(\frac{t}{N^r}\right)
	$$	have the same distribution on their respective measure spaces.
	Since \(H_r\) is decreasing, applying the same decreasing function \(H_r\) to two equimeasurable functions preserves equimeasurability. Therefore,
	$$
	H_r(P_S)
	\quad\text{and}\quad
	H_r\left(
	N^r\psi_{N,m}^{-1}\left(\frac{t}{N^r}\right)
	\right)
	$$
	are equimeasurable.
	But,	
	$$
	H_r(P_S(t_1,\ldots,t_d))
	=
	F_{N,S}(t_1,\ldots,t_d),
	$$
	while
		$$
H_r\left(
		N^r\psi_{N,m}^{-1}\left(\frac{t}{N^r}\right)
		\right)
	=
		\int_{N^r\psi_{N,m}^{-1}(t/N^r)}^{N^d}
		\frac{\mu(\psi_{N,d}(v),f)}{v}
		\frac{
			\log^{d-1}
			\left(
			\frac{v}
			{N^r\psi_{N,m}^{-1}(t/N^r)}
			\right)
		}{(d-1)!}\,dv.
	$$
	Hence, the latter function is precisely the decreasing rearrangement of \(F_{N,S}\). Therefore,
		$$
			\mu(t,F_{N,S})
			=
			\int_{N^{|S|}
				\psi_{N,d-|S|}^{-1}
				\left(\frac{t}{N^{|S|}}\right)}^{N^d}
			\frac{\mu(\psi_{N,d}(v),f)}{v}
		\cdot
			\frac{
				\log^{d-1}
				\left(
				\frac{v}
				{N^{|S|}
					\psi_{N,d-|S|}^{-1}
					\left(\frac{t}{N^{|S|}}\right)}
				\right)
			}{(d-1)!}\,dv,
		$$
		for $0<t<N^d.$
	This proves the lemma.
\end{proof}

We first establish a simple but useful inequality for the inverse of the function $\psi_{N,d},$ which will be used in later estimates.
\begin{lem}\label{first trivial lemma} Let $d\in \mathbb{N}$ and let $\psi_{N,d}$ be the function defined by \eqref{psi_N,d-function}. We have
	$$\log(\frac{\psi_{N,d}^{-1}(t)}{\psi_{N,d}^{-1}(w)})\geq\log(\frac{t}{w}),\quad 0<w<t<N^d.$$	
\end{lem}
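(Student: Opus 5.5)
We need $\log\psi_{N,d}^{-1}(t)-\log\psi_{N,d}^{-1}(w)\ge \log t-\log w$ for $0<w<t<N^d$. Since $\log$ is increasing and both sides vanish when $w=t$, it is enough to show that the function
$$
h(t):=\log\psi_{N,d}^{-1}(t)-\log t,\qquad t\in(0,N^d),
$$
is non-decreasing. Equivalently, $t\mapsto \psi_{N,d}^{-1}(t)/t$ is non-decreasing on $(0,N^d)$. We can also strip out the scaling. Since $\psi_{N,d}(x)=N^d\psi_d(x/N^d)$, we have $\psi_{N,d}^{-1}(t)=N^d\psi_d^{-1}(t/N^d)$, and therefore
$$
\frac{\psi_{N,d}^{-1}(t)}{t}=\frac{\psi_d^{-1}(t/N^d)}{t/N^d}.
$$
So it suffices to prove that $u\mapsto \psi_d^{-1}(u)/u$ is non-decreasing on $(0,1)$.

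Substitute $u=\psi_d(x)$ with $x\in(0,1)$. By Lemma \ref{primative-lemma}, $\psi_d$ is a strictly increasing bijection of $(0,1)$ onto $(0,1)$, so $x$ increases exactly when $u$ does. The claim therefore becomes: $x\mapsto x/\psi_d(x)$ is non-decreasing on $(0,1)$, or equivalently $x\mapsto \psi_d(x)/x$ is non-increasing there. From \eqref{psi_d-function},
$$
\frac{\psi_d(x)}{x}=\sum_{k=0}^{d-1}\frac{\log^k(1/x)}{k!}.
$$
Each term is a non-negative power of $\log(1/x)$. On $(0,1)$ the quantity $\log(1/x)$ is positive and decreasing in $x$, so every term, and hence the sum, is non-increasing. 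This proves the monotonicity.

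Alternatively, one can argue by concavity without the explicit formula. By Lemma \ref{primative-lemma}, $\psi_d'(x)=\log^{d-1}(1/x)/(d-1)!$, which is decreasing on $(0,1)$, so $\psi_d$ is concave there. Also $\psi_d(0^+)=0$, since $x\log^k(1/x)\to0$ as $x\to0^+$. A concave function vanishing at $0$ has $\psi_d(x)/x$ non-increasing, which gives the same conclusion.

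Putting the steps together: for $0<w<t<N^d$ we get $\psi_{N,d}^{-1}(t)/t\ge \psi_{N,d}^{-1}(w)/w$. Rearranged, this reads $\psi_{N,d}^{-1}(t)/\psi_{N,d}^{-1}(w)\ge t/w$, and taking logarithms gives the lemma.

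There is no genuine obstacle here. The only point that needs care is that $\psi_{N,d}$ is a bijection of $(0,N^d)$ onto $(0,N^d)$ (noted after \eqref{psi_N,d-function}), so the substitution preserves the order and no boundary issues arise.
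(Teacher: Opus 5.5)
Your proof is correct and is essentially the paper's argument. The paper observes directly that $\sum_{k=0}^{d-1}\log^k(N^d/a)/k!\le\sum_{k=0}^{d-1}\log^k(N^d/b)/k!$ for $0<b<a<N^d$, hence $\psi_{N,d}(a)/\psi_{N,d}(b)\le a/b$, and then substitutes $a=\psi_{N,d}^{-1}(t)$, $b=\psi_{N,d}^{-1}(w)$; your rescaling to $\psi_d$ and the monotonicity of $\psi_d(x)/x$ is the same fact, stated in normalized form.
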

\begin{proof} It is immediate that
	$$1\geq \frac{\sum_{k=0}^{d-1}\frac{\log^k\big(\frac{N^d}{a}\big)}{k!}}{\sum_{k=0}^{d-1}\frac{\log^k\big(\frac{N^d}{b}\big)}{k!}},\quad 0<b<a<N^d.$$
	Multiplying by $\frac{a}{b},$ we write
	$$\frac{a}{b}\geq \frac{\psi_{N,d}(a)}{\psi_{N,d}(b)},\quad 0<b<a<N^d.$$
	Substituting $a=\psi_{N,d}^{-1}(t)$ and $b=\psi_{N,d}^{-1}(w),$ we write
	$$\frac{\psi_{N,d}^{-1}(t)}{\psi_{N,d}^{-1}(w)}\geq \frac{t}{w},\quad 0<w<t<N^d.$$
	Taking logarithms, we complete the proof.
\end{proof}

We next record an asymptotic property of the function 
$\psi^{-1}_{d},$ which will be crucial in the limit computations for large $N.$
\begin{lem}\label{second trivial lemma} Let $d\in \mathbb{N}$ and let $\psi_{d}$ be the function defined by \eqref{psi_d-function}. For every $t>0,$ we have
	$$\lim_{N\to\infty}\frac{\log\big(\frac1{\psi_d^{-1}(\frac{t}{N})}\big)}{\log(N)}=1.$$
\end{lem}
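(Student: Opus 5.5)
Set $x_N:=\psi_d^{-1}(t/N)$, so that $x_N\in(0,1)$ for $N>t$ (recall that $\psi_d$ is a strictly increasing bijection of $(0,1)$ onto itself by Lemma~\ref{primative-lemma} and Lemma~\ref{first volume computation}). Since $t/N\to0$, we have $x_N\to0$, hence $L_N:=\log(1/x_N)\to\infty$. The plan is to invert the explicit formula $\psi_d(x)=x\sum_{k=0}^{d-1}\frac{\log^k(1/x)}{k!}$ at the point $x_N$ and take logarithms. This gives
$$\log\frac{t}{N}=\log\psi_d(x_N)=-L_N+\log\Big(\sum_{k=0}^{d-1}\frac{L_N^k}{k!}\Big),$$
that is,
$$L_N=\log N-\log t+\log\Big(\sum_{k=0}^{d-1}\frac{L_N^k}{k!}\Big).$$

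Now bound the correction term. Since $L_N\geq0$, we have $1\leq\sum_{k=0}^{d-1}L_N^k/k!\leq e^{L_N}$. More usefully, for $L_N\geq1$ the sum is at most $d\,L_N^{d-1}$, so its logarithm is at most $\log d+(d-1)\log L_N=o(L_N)$. Therefore
$$L_N(1+o(1))=\log N-\log t+O(1),$$
which gives both $L_N\to\infty$ consistently and $L_N/\log N\to1$. Concretely, the lower bound $L_N\geq\log N-\log t$ is immediate because the sum is at least $1$. For the upper bound, $L_N\leq\log N-\log t+\log d+(d-1)\log L_N$, and dividing by $\log N$ yields $\limsup L_N/\log N\leq1$, since $\log L_N/L_N\to0$ and $L_N\geq\log N-\log t$ is comparable to $\log N$.

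The only mild obstacle is making the $o(L_N)$ step rigorous in the upper bound without circularity. I would handle it by first showing $L_N\leq 2\log N$ for large $N$. This follows from $L_N-(d-1)\log L_N\leq\log N+C$, because $x\mapsto x-(d-1)\log x$ is eventually increasing and exceeds $\log N+C$ at $x=2\log N$. With $L_N\leq 2\log N$ in hand, we get $\log L_N=O(\log\log N)=o(\log N)$, and the limit equals $1$. The case $d=1$ is trivial, since then $\psi_1^{-1}(t/N)=t/N$.
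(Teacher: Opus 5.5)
Your proof is correct, but it takes a different route from the paper's.

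\textbf{The paper's route.} It first records the leading-order behaviour $\psi_d(x)\sim x\log^{d-1}(1/x)/(d-1)!$ as $x\to0$. It then ``formally inverts'' this to get
\[
\psi_d^{-1}(s)=\frac{(d-1)!\,s}{\log^{d-1}(1/s)}\,(1+o(1)),\qquad s\to0,
\]
replacing $\log(1/x)$ by $\log(1/s)$ ``up to $1+o(1)$ terms''. Finally it substitutes $s=t/N$ and takes logarithms.

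\textbf{Your route.} You never invert an asymptotic relation. You take logarithms in the exact identity $\psi_d(x_N)=t/N$ and obtain the explicit sandwich
\[
\log N-\log t\;\le\; L_N\;\le\; \log N-\log t+\log d+(d-1)\log L_N .
\]
This is fully rigorous. It is in fact exactly the estimate $\log\bigl(\sum_{k<d}L^k/k!\bigr)=o(L)$ that the paper's replacement of $\log(1/x)$ by $\log(1/s)$ silently relies on.

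Your worry about circularity is unfounded. The lower bound already gives $L_N\to\infty$, so $(d-1)\log L_N=o(L_N)$ follows directly. The detour through $L_N\le 2\log N$ is harmless but unnecessary.

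\textbf{What each route buys.} The paper's argument yields the sharper multiplicative asymptotics of $\psi_d^{-1}$ near $0$. It reuses these for the second limit in Lemma~\ref{third trivial lemma}. Your argument, as stated, gives only log-scale information. One more line recovers the sharper form, however. Your sandwich gives $\log(1/x)\sim\log(1/s)$ for $x=\psi_d^{-1}(s)$. Then the exact identity $x=s\big/\sum_{k<d}\log^k(1/x)/k!$, together with $\sum_{k<d}L^k/k!\sim L^{d-1}/(d-1)!$, immediately gives $\psi_d^{-1}(s)=(d-1)!\,s\,\log^{1-d}(1/s)\,(1+o(1))$. So your approach can serve both lemmas with full rigor.
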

\begin{proof} Taking into account that
	as $t \to 0$, the largest power of the logarithm dominates in
	$$
	\psi_d(t) = t \sum_{k=0}^{d-1} \frac{\log^k(\frac{1}{t})}{k!} \sim t \frac{\log^{d-1}(\frac{1}{t})}{(d-1)!},\quad t \to 0.
	$$
	Formally inverting the leading term gives
	$$
	s = \psi_d(t) \sim t \frac{\log^{d-1}(\frac{1}{t})}{(d-1)!}, \quad t \to 0,$$
	which implies
	$$t\sim \frac{(d-1)! \, s}{\log^{d-1}(\frac{1}{t})}, \quad t \to 0.
	$$
	Finally, since $t \to 0$ implies $s \to 0$, we may replace $\log(\frac{1}{t})$ by $\log(\frac{1}{s})$ up to $1+o(1)$ terms, yielding
	$$
	\psi_d^{-1}(s) =\frac{(d-1)! \, s}{\log^{d-1}(\frac{1}{s})} \cdot (1+o(1)), \quad s \to 0.
	$$ Putting $s=\frac{t}{N},$
	we write
	$$\frac1{\psi_d^{-1}(\frac{t}{N})}=\frac{N\cdot\log^{d-1}(\frac{N}{t})}{(d-1)!t}\cdot (1+o(1)),\quad N\to\infty.$$
	Thus,
	$$\log\big(\frac1{\psi_d^{-1}(\frac{t}{N})}\big)=\log\big(N\cdot\log^{d-1}(\frac{N}{t})\big)-\log((d-1)!t)+o(1),\quad N\to\infty.$$
	Therefore,
	$$
	\lim_{N \to \infty} \frac{\log\Big(\frac{1}{\psi_d^{-1}(\frac{t}{N})}\Big)}{\log(N)} = 1,
	$$
	which gives the desired result.
\end{proof}

The following lemma provides precise asymptotics for 
$\psi_{N,d}^{-1}$ and its derivative, a key step in evaluating the limiting behavior of our main expressions.
\begin{lem}\label{third trivial lemma} Let $d\in \mathbb{N}$ and let $\psi_{N,d}$ be the function defined by \eqref{psi_N,d-function}. For every $t>0,$ we have
	$$\lim_{N\to\infty}(\log^{d-1}(N)\cdot \psi_{N,d}^{-1})'(t)=\frac{(d-1)!}{d^{d-1}},\quad \lim_{N\to\infty}\log^{d-1}(N)\cdot\psi_{N,d}^{-1}(t)=\frac{(d-1)!}{d^{d-1}}t.$$
\end{lem}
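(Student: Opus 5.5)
The proof reduces both limits to one asymptotic fact: the logarithm appearing in $\psi_{N,d}'$, evaluated at $\psi_{N,d}^{-1}(t)$, grows like $d\log N$. Fix $t>0$ and put $s_N:=\psi_{N,d}^{-1}(t)$. This is well defined for $N^d>t$, since $\psi_{N,d}$ is a strictly increasing bijection of $(0,N^d)$ onto itself. By \eqref{psi_N,d-function}, $s_N=N^d\psi_d^{-1}(t/N^d)$, so
$$\log\Big(\frac{N^d}{s_N}\Big)=\log\Big(\frac{1}{\psi_d^{-1}(t/N^d)}\Big).$$
The proof of Lemma \ref{second trivial lemma} only uses that the argument $t/N$ tends to $0$ with logarithm asymptotic to $\log N$. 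The same argument, with $N$ replaced by $M=N^d$, therefore gives
$$\frac{\log(N^d/s_N)}{d\log N}\longrightarrow 1,\qquad N\to\infty .$$
This is the key asymptotic input.

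\emph{Derivative.} By the inverse function rule and the formula for $\psi_{N,d}'$ recorded after \eqref{psi_N,d-function},
$$(\psi_{N,d}^{-1})'(t)=\frac{1}{\psi_{N,d}'(s_N)}=\frac{(d-1)!}{\log^{d-1}(N^d/s_N)}.$$
Multiplying by $\log^{d-1}(N)$ and using the key limit, the quotient $\big(\log N/\log(N^d/s_N)\big)^{d-1}$ tends to $d^{-(d-1)}$. This gives the first claim. For $d=1$ both sides equal $1$ trivially.

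\emph{Function value.} From the defining identity $t=\psi_{N,d}(s_N)$ we get
$$t=s_N\sum_{k=0}^{d-1}\frac{\log^k(N^d/s_N)}{k!}.$$
Since $L_N:=\log(N^d/s_N)\to\infty$, the sum is $\frac{L_N^{d-1}}{(d-1)!}(1+o(1))$. Using $L_N\sim d\log N$ once more,
$$s_N\log^{d-1}(N)=\frac{(d-1)!\,t}{d^{d-1}}\,(1+o(1)),$$
which is the second claim.

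The only delicate step is the key limit, that is, making sure that $\log(1/s_N)$ is $o(\log N)$ so that it does not change the leading behaviour of $L_N$. I would check it directly rather than only cite Lemma \ref{second trivial lemma}. On one side, $\psi_{N,d}(s)\ge s$, so $s_N\le t$ and hence $L_N\ge d\log N-\log t$. On the other side, $\psi_{N,d}(s)\le s(1+L)^{d-1}e$ with $L=\log(N^d/s)$, which yields $s_N\ge ct/(1+L_N)^{d-1}$. Then $L_N\le d\log N+O(\log L_N)$, and therefore $L_N=d\log N+O(\log\log N)$. This confirms the asymptotic without relying on the formal inversion step in the earlier lemma.
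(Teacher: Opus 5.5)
Your proof is correct and follows essentially the paper's route: the inverse function rule gives $(\psi_{N,d}^{-1})'(t)=(d-1)!/\log^{d-1}(N^d/s_N)$, and both limits then reduce to $\log(N^d/s_N)\sim d\log N$. The paper gets this from Lemma \ref{second trivial lemma} (applied with $N^d$ in place of $N$) together with the asymptotic $\psi_d^{-1}(s)\sim (d-1)!\,s/\log^{d-1}(1/s)$, while you read the value limit directly off $t=\psi_{N,d}(s_N)$ and prove $L_N=d\log N+O(\log\log N)$ from the two-sided bounds $s\le\psi_{N,d}(s)\le e\,s\,(1+L)^{d-1}$, which rigorously replaces the paper's formal inversion step rather than changing the argument.
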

\begin{proof} Recall that $\psi_{N,d}(t)=N^d\psi_d(\frac{t}{N^d}),$ $0<t<N^d.$ Hence, $\psi_{N,d}'(t)=\psi_d'(\frac{t}{N^d}),$ $0<t<N^d.$ Also, we have that $N^{-d}\psi_{N,d}^{-1}(t)=\psi_d^{-1}(\frac{t}{N^d}),$ $0<t<N^d.$ Using the Chain Rule, we write
	$$(\psi_{N,d}^{-1})'(t)=\frac1{\psi_{N,d}'(\psi_{N,d}^{-1}(t))}=\frac1{\psi_d'(N^{-d}\psi_{N,d}^{-1}(t))}=\frac{(d-1)!}{\log^{d-1}\big(\frac1{\psi_d^{-1}(\frac{t}{N^d})}\big)}.$$
	The first assertion follows now from Lemma \ref{second trivial lemma}.	
	
	Next, we compute the second limit. Taking into account that
	$$\psi_d^{-1}(s)=\frac{(d-1)!s}{\log^{d-1}(\frac{1}{s})}\cdot(1+o(1)),\quad s\to0,$$
	we write
	$$\lim_{N\to\infty}\log^{d-1}(N)\cdot\psi_{N,d}^{-1}(t)=\lim_{N\to\infty}\log^{d-1}(N)\cdot\frac{(d-1)!\cdot\frac{t}{N^d}}{\log^{d-1}(\frac{N^d}{t})}=\frac{(d-1)!}{d^{d-1}}t.$$
	This completes the computation of the second limit.
\end{proof}

As a preparatory step, we show an upper estimate for the derivative of the scaled $\psi_{N,d}^{-1}.$
\begin{lem}\label{fourth trivial lemma} Let $d\in \mathbb{N}.$ If $\psi_{N,d}$ and $\psi_d$ are the functions defined by \eqref{psi_N,d-function} and \eqref{psi_d-function}, respectively, then we have
	$$(\log^{d-1}(N)\cdot\psi_{N,d}^{-1})'(t)\leq 2^{d-1}\cdot (d-1)!,\quad 0<t<\psi_d(N^{-\frac12})N^d.$$
\end{lem}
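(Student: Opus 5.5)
We argue directly from the chain-rule formula already obtained in the proof of Lemma \ref{third trivial lemma}, namely
$$(\psi_{N,d}^{-1})'(t)=\frac{(d-1)!}{\log^{d-1}\big(\frac{1}{\psi_d^{-1}(t/N^d)}\big)},\quad 0<t<N^d,$$
so that
$$(\log^{d-1}(N)\cdot\psi_{N,d}^{-1})'(t)=(d-1)!\Big(\frac{\log(N)}{\log\big(\frac{1}{\psi_d^{-1}(t/N^d)}\big)}\Big)^{d-1}.$$
The claim therefore reduces to the pointwise bound
$$\log\Big(\frac{1}{\psi_d^{-1}(t/N^d)}\Big)\geq \frac12\log(N)\quad\text{for } 0<t<\psi_d(N^{-\frac12})N^d.$$
For $d=1$ there is nothing to prove, since the derivative is identically $1=2^0\cdot 0!$. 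So we assume $d\geq2$ and $N\geq 2$; for $N=1$ the left side vanishes because $\log(1)=0$.

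For $d\geq 2$ we use monotonicity. By Lemma \ref{primative-lemma}, $\psi_d$ is strictly increasing on $(0,1]$, and $\psi_d(1)=1$, so $\psi_d^{-1}$ is increasing from $(0,1)$ onto $(0,1)$. The hypothesis $t<\psi_d(N^{-1/2})N^d$ says exactly that $t/N^d<\psi_d(N^{-1/2})$, and $N^{-1/2}\in(0,1)$. Applying the increasing map $\psi_d^{-1}$ gives
$$\psi_d^{-1}(t/N^d)<N^{-1/2}.$$
Hence
$$\log\big(1/\psi_d^{-1}(t/N^d)\big)>\tfrac12\log N>0,$$
and so the ratio inside the $(d-1)$-th power is less than $2$.

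Substituting this into the displayed formula yields the bound $(d-1)!\,2^{d-1}$, which is the claim. The only points needing care are that $t/N^d$ stays in $(0,1)$, so the branch of $\psi_d^{-1}$ on $(0,1)$ is the relevant one (this is automatic because $\psi_d(N^{-1/2})<1$), and that the logarithm in the denominator is positive, so the inequality does not flip when we raise to the power $d-1$. There is no real obstacle beyond checking these two facts.
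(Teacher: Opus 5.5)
Your proof is correct and follows essentially the same route as the paper: the same chain-rule formula $(\psi_{N,d}^{-1})'(t)=(d-1)!/\log^{d-1}\big(1/\psi_d^{-1}(t/N^d)\big)$, the reduction to $\log\big(1/\psi_d^{-1}(t/N^d)\big)\geq\frac12\log N$, and monotonicity of $\psi_d$ on $(0,1]$ to tie this to the hypothesis $t<\psi_d(N^{-1/2})N^d$. You run the argument forward from the hypothesis, whereas the paper chains equivalences backward from the claim, and you handle the cases $d=1$ and $N=1$ and the positivity of the denominator explicitly, which the paper leaves implicit.
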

\begin{proof} Recall that $\psi_{N,d}(t)=N^d\psi_d(\frac{t}{N^d}),$ $0<t<N^d.$ Hence, $\psi_{N,d}'(t)=\psi_d'(\frac{t}{N^d}),$ $0<t<N^d.$ Also, we have, $N^{-d}\psi_{N,d}^{-1}(t)=\psi_d^{-1}(\frac{t}{N^d}),$ $0<t<N^d.$ Using the Chain Rule, we write
	$$(\psi_{N,d}^{-1})'(t)=\frac1{\psi_{N,d}'(\psi_{N,d}^{-1}(t))}=\frac1{\psi_d'(N^{-d}\psi_{N,d}^{-1}(t))}=\frac{(d-1)!}{\log^{d-1}\big(\frac1{\psi_d^{-1}(\frac{t}{N^d})}\big)}.$$
	Therefore,
	$$(\log^{d-1}(N)\cdot\psi_{N,d}^{-1})'(t)=\log^{d-1}(N)\cdot(\psi_{N,d}^{-1})'(t)=\frac{(d-1)!\cdot \log^{d-1}(N)}{\log^{d-1}\big(\frac1{\psi_d^{-1}(\frac{t}{N^d})}\big)}.$$
	Hence, the claimed inequality is the same as
	$$\frac{(d-1)!\cdot \log^{d-1}(N)}{\log^{d-1}\big(\frac1{\psi_d^{-1}(\frac{t}{N^d})}\big)}\leq 2^{d-1}\cdot (d-1)!,\quad 0<t<\psi_d(N^{-\frac12})N^d,$$
	which is the same as
	$$\frac{1}{2^{d-1}}\log^{d-1}(N)\leq \log^{d-1}\big(\frac1{\psi_d^{-1}(\frac{t}{N^d})}\big),\quad 0<t<\psi_d(N^{-\frac12})N^d.$$
	Taking $\frac{1}{d-1}$ power from the both sides, we obtain
	$$\frac12\log(N)\leq\log\big(\frac1{\psi_d^{-1}(\frac{t}{N^d})}\big)\quad 0<t<\psi_d(N^{-\frac12})N^d.$$
	Taking exponentials from the both sides we get that the previous one is the same as
	$$\psi_d^{-1}(\frac{t}{N^d})\leq N^{-\frac12},\quad 0<t<\psi_d(N^{-\frac12})N^d,$$
	Acting with $\psi_d,$ we obtain
	$0<t<\psi_d(N^{-\frac12})N^d,$
	thereby completing the proof.
\end{proof}

\begin{lem}\label{first convergence lemma} Let $d\in \mathbb{N}$ and let $C^{ d}$ be the operator defined by \eqref{Cesaro d power}.  For every $t>0,$ we have
$$\liminf_{N\to\infty}\frac1{\psi_{N,d}^{-1}(t)}\int_0^{\psi_{N,d}^{-1}(t)}\mu(\psi_{N,d}(v),f)\frac{\log^{d-1}(\frac{\psi_{N,d}^{-1}(t)}{v})}{(d-1)!}dv\geq (C^d\mu(f))(t).$$
Here, $\psi_{N,d}$ is the function defined by \eqref{psi_N,d-function}.
\end{lem}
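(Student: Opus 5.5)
The plan is to substitute $v=\psi_{N,d}^{-1}(w)$ in the integral, compare the logarithmic factor with $\log(t/w)$ by Lemma \ref{first trivial lemma}, find the pointwise limit of the Jacobian factor from Lemma \ref{third trivial lemma}, and conclude with Fatou's lemma. Since the integrand is nonnegative, no domination argument is needed.

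Fix $t>0$ and take $N$ so large that $t<N^d$. Write $T_N:=\psi_{N,d}^{-1}(t)$. By the discussion after \eqref{psi_N,d-function}, $\psi_{N,d}$ is a smooth, strictly increasing bijection of $(0,N^d)$ onto itself. We may therefore substitute $v=\psi_{N,d}^{-1}(w)$, with $w\in(0,t)$, and obtain
$$\frac1{T_N}\int_0^{T_N}\mu(\psi_{N,d}(v),f)\frac{\log^{d-1}(\frac{T_N}{v})}{(d-1)!}dv=\int_0^t\mu(w,f)\,\frac{\log^{d-1}\big(\frac{\psi_{N,d}^{-1}(t)}{\psi_{N,d}^{-1}(w)}\big)}{(d-1)!}\cdot\frac{(\psi_{N,d}^{-1})'(w)}{\psi_{N,d}^{-1}(t)}\,dw.$$
For $0<w<t<N^d$, Lemma \ref{first trivial lemma} gives $\log\big(\psi_{N,d}^{-1}(t)/\psi_{N,d}^{-1}(w)\big)\geq\log(t/w)\geq0$. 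Hence the integrand is bounded below by
$$\mu(w,f)\frac{\log^{d-1}(\frac tw)}{(d-1)!}\cdot\frac{(\psi_{N,d}^{-1})'(w)}{\psi_{N,d}^{-1}(t)}\geq 0 .$$

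Next, multiply the numerator and denominator of the last factor by $\log^{d-1}(N)$:
$$\frac{(\psi_{N,d}^{-1})'(w)}{\psi_{N,d}^{-1}(t)}=\frac{(\log^{d-1}(N)\cdot\psi_{N,d}^{-1})'(w)}{\log^{d-1}(N)\cdot\psi_{N,d}^{-1}(t)}.$$
By Lemma \ref{third trivial lemma}, for every fixed $w\in(0,t)$ the numerator tends to $\frac{(d-1)!}{d^{d-1}}$ and the denominator tends to $\frac{(d-1)!}{d^{d-1}}t$. The factor therefore converges pointwise to $1/t$.

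Applying Fatou's lemma on $(0,t)$ to the nonnegative lower bound gives
$$\liminf_{N\to\infty}(\cdots)\geq\frac1t\int_0^t\mu(w,f)\frac{\log^{d-1}(\frac tw)}{(d-1)!}dw=(C^d\mu(f))(t)$$
by \eqref{Cesaro d power}, which is the claim.

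The only delicate points are the validity of the change of variables and the pointwise convergence of the Jacobian factor. The first holds because $\psi_{N,d}$ is a $C^1$ increasing bijection of $(0,N^d)$. The second is exactly the content of Lemma \ref{third trivial lemma}.
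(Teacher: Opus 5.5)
Your argument is correct and follows the paper's proof: the substitution $v=\psi_{N,d}^{-1}(w)$, the lower bound from Lemma~\ref{first trivial lemma}, and the pointwise limits from Lemma~\ref{third trivial lemma}. The only difference is that you conclude with Fatou's lemma where the paper uses dominated convergence, which is legitimate since only a $\liminf$ lower bound is required. This lets you drop Lemma~\ref{fourth trivial lemma}, the restriction $t<\psi_d(N^{-1/2})N^d$, and any integrability assumption on $\mu(w,f)\log^{d-1}(t/w)$ over $(0,t)$.
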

\begin{proof} We consider only those $N$ with $t<\psi_d(N^{-\frac12})N^d.$ 
By Lemma \ref{third trivial lemma}, 
$$\mu(w,f)\frac{\log^{d-1}(\frac{t}{w})}{(d-1)!}(\log^{d-1}(N)\cdot\psi_{N,d}^{-1})'(w)\to d^{1-d}\mu(w,f)\log^{d-1}(\frac{t}{w}),\quad N\to\infty.$$
By Lemma\footnote{It is exactly at this place we are using the assumption $t<\psi_d(N^{-\frac12})N^d.$} \ref{fourth trivial lemma},
$$\mu(w,f)\frac{\log^{d-1}(\frac{t}{w})}{(d-1)!}(\log^{d-1}(N)\cdot\psi_{N,d}^{-1})'(w)\leq 2^{d-1}\mu(w,f)\log^{d-1}(\frac{t}{w}).$$
Using Dominated Convergence Theorem, we infer
$$\lim_{N\to\infty}\int_0^t\mu(w,f)\frac{\log^{d-1}(\frac{t}{w})}{(d-1)!}(\log^{d-1}(N)\cdot\psi_{N,d}^{-1})'(w)dw$$
$$=d^{1-d}\int_0^t\mu(w,f)\log^{d-1}(\frac{t}{w})dw.$$
Again using Lemma \ref{third trivial lemma}, we conclude that
$$\lim_{N\to\infty}\frac1{\log^{d-1}(N)\cdot\psi_{N,d}^{-1}(t)}\int_0^t\mu(w,f)\frac{\log^{d-1}(\frac{t}{w})}{(d-1)!}(\log^{d-1}(N)\cdot\psi_{N,d}^{-1})'(w)dw$$
$$\stackrel{L. \ref{third trivial lemma}}{=}\frac1{t\cdot(d-1)!}\int_0^t\mu(w,f)\log^{d-1}(\frac{t}{w})dw\stackrel{\eqref{Cesaro d power}}{=}(C^d\mu(f))(t).$$

Substituting $v=\psi_{N,d}^{-1}(w),$ we write
\begin{eqnarray*}&&\frac1{\psi_{N,d}^{-1}(t)}\int_0^{\psi_{N,d}^{-1}(t)}\mu(\psi_{N,d}(v),f)\frac{\log^{d-1}(\frac{\psi_{N,d}^{-1}(t)}{v})}{(d-1)!}dv\\
&=&\frac1{\psi_{N,d}^{-1}(t)}\int_0^t\mu(w,f)\frac{\log^{d-1}(\frac{\psi_{N,d}^{-1}(t)}{\psi_{N,d}^{-1}(w)})}{(d-1)!}(\psi_{N,d}^{-1})'(w)dw \\
&\stackrel{L.\ref{first trivial lemma}}{\geq}& \frac1{\psi_{N,d}^{-1}(t)}\int_0^t\mu(w,f)\frac{\log^{d-1}(\frac{t}{w})}{(d-1)!}(\psi_{N,d}^{-1})'(w)dw\\
&=&\frac1{\log^{d-1}(N)\cdot\psi_{N,d}^{-1}(t)}\int_0^t\mu(w,f)\frac{\log^{d-1}(\frac{t}{w})}{(d-1)!}(\log^{d-1}(N)\cdot\psi_{N,d}^{-1})'(w)dw.
\end{eqnarray*}
Combining the last two equations, we complete the proof.
\end{proof}

To facilitate later estimates involving integrals of $\psi_{N,d}^{-1},$ we first bound its logarithmic derivative.
\begin{lem}\label{fifth trivial lemma}  Let $d\in \mathbb{N}$ and let $\psi_{N,d}$ be the function defined by \eqref{psi_N,d-function}. There exists $c_d\in(0,1)$ such that
	$$\frac{(\psi_{N,d}^{-1})'(w)}{\psi_{N,d}^{-1}(w)}\leq\frac2w,\quad 0<w<c_dN^d.$$	
\end{lem}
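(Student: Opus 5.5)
We reduce the statement to an inequality for $\psi_d$ on a fixed small interval, using the scaling $\psi_{N,d}^{-1}(w)=N^d\psi_d^{-1}(w/N^d)$. Differentiating this identity gives $(\psi_{N,d}^{-1})'(w)=(\psi_d^{-1})'(w/N^d)$. Putting $s=w/N^d$, the claimed bound becomes
$$\frac{(\psi_d^{-1})'(s)}{\psi_d^{-1}(s)}\leq \frac{2}{s},\qquad 0<s<c_d,$$
which no longer involves $N$. So it suffices to find $c_d\in(0,1)$ for which this one-variable inequality holds.

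Next we rewrite the inequality in terms of $\psi_d$ itself. Set $u=\psi_d^{-1}(s)\in(0,1)$. By Lemma \ref{primative-lemma},
$$(\psi_d^{-1})'(s)=\frac{1}{\psi_d'(u)}=\frac{(d-1)!}{\log^{d-1}(1/u)}.$$
Hence the inequality is equivalent to
$$\psi_d(u)\leq 2u\,\psi_d'(u)=\frac{2u\log^{d-1}(1/u)}{(d-1)!}.$$
Dividing by $u$ and writing $L=\log(1/u)>0$, this becomes the polynomial inequality
$$\sum_{k=0}^{d-1}\frac{L^k}{k!}\leq \frac{2L^{d-1}}{(d-1)!}.$$

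We now check that this polynomial inequality holds for all large $L$. For $d=1$ it reads $1\leq 2$, which is true, and we may take any $c_1\in(0,1)$. For $d\geq 2$ the left side equals $\frac{L^{d-1}}{(d-1)!}(1+o(1))$ as $L\to\infty$. So there is $L_d$ such that the inequality holds for all $L\geq L_d$. For an explicit choice, note that for $L\geq 1$ each lower term satisfies $\frac{L^k}{k!}\leq L^{d-2}$, so their sum is at most $(d-1)L^{d-2}$. Then $L\geq (d-1)\,(d-1)!$ suffices.

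Finally we choose $c_d$. Large $L$ means small $u$, and $\psi_d$ is increasing on $(0,1)$ by Lemma \ref{primative-lemma}. We therefore set $c_d=\psi_d(e^{-L_d})\in(0,1)$. Then $0<s<c_d$ implies $u=\psi_d^{-1}(s)<e^{-L_d}$, so $L>L_d$ and the inequality holds. This proves the lemma with this $c_d$.

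The only step needing any care is making the "large $L$" threshold explicit; all the other steps are direct changes of variables.
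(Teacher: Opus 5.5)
Your proof is correct and follows essentially the same route as the paper. Both scale away $N$ via $\psi_{N,d}^{-1}(w)=N^d\psi_d^{-1}(w/N^d)$, reduce to $\psi_d(u)\le 2u\,\psi_d'(u)$ (a polynomial inequality in $L=\log(1/u)$ that holds beyond a threshold), and take $c_d=\psi_d(e^{-L_d})$. Your explicit threshold $L_d=(d-1)(d-1)!$ works, and treating $d=1$ separately is sensible: the paper's recipe gives $T=0$ there, hence $c_1=\psi_1(1)=1\notin(0,1)$.
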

\begin{proof}
	First, we claim that there exists $T>0$ such that for every $t\geq T$ we have
	$$
	\frac{t^{\,d-1}}{(d-1)!}\;\geq\;\frac12\sum_{k=0}^{d-1}\frac{t^{\,k}}{k!}.
	$$
	Indeed, dividing both sides by $\frac{t^{d-1}}{(d-1)!},$ we have
	$$
	1+\sum_{k=0}^{d-2}\frac{(d-1)!}{k!}\,t^{\,k-(d-1)} \;\leq\; 2.
	$$
	Define
	$$
	S(t):=\sum_{k=0}^{d-2}\frac{(d-1)!}{k!}\,t^{\,k-(d-1)}.
	$$
	For every $k\leq d-2,$ we have that $k-(d-1)\leq -1$, hence for $t\geq1$ we obtain
	$t^{\,k-(d-1)}\leq t^{-1}$. Thus for $t\geq1$
	$$
	S(t)\leq \frac{1}{t}\sum_{k=0}^{d-2}\frac{(d-1)!}{k!}.
	$$
	Choose $T:=\sum\limits_{k=0}^{d-2}\frac{(d-1)!}{k!} $. Then for all $t\geq T$,
	$$
	S(t)\leq\frac{\sum\limits_{k=0}^{d-2}\frac{(d-1)!}{k!}}{T}= 1,
	$$
	so $1+S(t)\leq2$ and the claimed inequality holds for every $t\geq T$. Let $t=\log \tfrac{1}{u}>0$ and set
	$$
	u_0:=e^{-T}\in(0,1).
	$$
	Define
	$$
	c_d:=\psi_d(u_0)\in(0,1).
	$$
	Since $\psi_d$ is continuous, strictly increasing, and satisfies $\psi_d(0+)=0$ and $\psi_d(1)=1,$ $\psi_d^{-1}(c_d)=u_0$. Therefore, there is $c_d\in(0,1)$ such that for every $0<u<\psi_d^{-1}(c_d)$
	(equivalently, every $t>T$) we have
	$$
	\frac{\log^{d-1}(\frac1u)}{(d-1)!}\;\geq\;\frac12\sum_{k=0}^{d-1}\frac{\log^k(\frac1u)}{k!}.
	$$
	By Lemma \ref{primative-lemma} we know 
	$$\psi_d(u)=u\sum_{k=0}^{d-1}\frac{\log^k(\frac1u)}{k!}\quad \text{and}\quad \psi_d'(u)=\frac{\log^{d-1}(\frac1u)}{(d-1)!}.$$
	Therefore, the previous inequality is equivalent to
	$$\psi_d'(u)\cdot u\geq\frac{\psi_d(u)}{2},\quad 0<u<\psi_d^{-1}(c_d).$$
	Denoting $u=\psi_d^{-1}(s),$ this is the same as
	$$\psi_d'(\psi_d^{-1}(s))\cdot \psi_d^{-1}(s)\geq\frac{s}{2},\quad 0<s<c_d.$$
	Using the Chain Rule, we write
	$$(\psi_d^{-1})'(s)=\frac1{\psi_d'(\psi_d^{-1}(s))}.$$
	Hence, the last inequality is the same as
	$$\frac{(\psi_d^{-1})'(s)}{\psi_d^{-1}(s)}\leq\frac{2}{s},\quad 0<s<c_d.$$
	Recall that $\psi_{N,d}(w)=N^d\psi_d(\frac{w}{N^d}),$ $0<w<N^d.$ Hence, $\psi_{N,d}^{-1}(w)=N^d\psi_d^{-1}(\frac{w}{N^d}),$ $0<w<N^d.$ Denoting $s=\frac{w}{N^d},$ the previous estimate can be written as
	$$\frac{(\psi_d^{-1})'(\frac{w}{N^d})}{\psi_d^{-1}(\frac{w}{N^d})}\leq\frac{2N^d}{w},\quad 0<w<c_dN^d.$$
	Since $(\psi_{N,d}^{-1})'(w)=(\psi_d^{-1})'(\frac{w}{N^d}),$ it follows that
	$$\frac{(\psi_{N,d}^{-1})'(w)}{\psi_{N,d}^{-1}(w)}=\frac{(\psi_d^{-1})'(\frac{w}{N^d})}{N^d\psi_{d}^{-1}(\frac{w}{N^d})}\leq\frac2w,\quad 0<w<c_dN^d,$$
	thereby completing the proof.
\end{proof}

\begin{lem}\label{second convergence lemma} Let $d\in \mathbb{N}$ and let $C^{* d}$ be the operator defined by \eqref{Ces adj d power}.  For every $t>0,$ we have
$$\liminf_{N\to\infty}\int_{\psi_{N,d}^{-1}(t)}^{N^d}\frac{\mu(\psi_{N,d}(v),f)}{v}\frac{\log^{d-1}\big(\frac{v}{\psi_{N,d}^{-1}(t)}\big)}{(d-1)!}dv\geq ((C^{\ast})^d\mu(f))(t).$$
Here, $\psi_{N,d}$ is the function defined by \eqref{psi_N,d-function}.
\end{lem}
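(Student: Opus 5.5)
We follow the pattern of Lemma \ref{first convergence lemma}, using the change of variables $v=\psi_{N,d}^{-1}(w)$, $w\in(\psi_{N,d}(\psi_{N,d}^{-1}(t)),\psi_{N,d}(N^d))=(t,N^d)$. Since $\psi_{N,d}$ is an increasing bijection of $(0,N^d)$ onto itself, this substitution gives
$$\int_{\psi_{N,d}^{-1}(t)}^{N^d}\frac{\mu(\psi_{N,d}(v),f)}{v}\frac{\log^{d-1}\big(\frac{v}{\psi_{N,d}^{-1}(t)}\big)}{(d-1)!}dv=\int_t^{N^d}\mu(w,f)\frac{\log^{d-1}\big(\frac{\psi_{N,d}^{-1}(w)}{\psi_{N,d}^{-1}(t)}\big)}{(d-1)!}\cdot\frac{(\psi_{N,d}^{-1})'(w)}{\psi_{N,d}^{-1}(w)}dw.$$
All factors are nonnegative. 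By Lemma \ref{first trivial lemma} (applied with $w>t$), the logarithm is bounded below by $\log(\frac{w}{t})$. Hence the integral dominates
$$\int_t^{N^d}\mu(w,f)\frac{\log^{d-1}(\frac{w}{t})}{(d-1)!}\cdot w\frac{(\psi_{N,d}^{-1})'(w)}{\psi_{N,d}^{-1}(w)}\,\frac{dw}{w}.$$

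Next we examine the weight $\rho_N(w):=w\,(\psi_{N,d}^{-1})'(w)/\psi_{N,d}^{-1}(w)$ for fixed $w>0$ as $N\to\infty$. By Lemma \ref{third trivial lemma}, after multiplying numerator and denominator by $\log^{d-1}(N)$, the numerator tends to $\frac{(d-1)!}{d^{d-1}}$ and the denominator tends to $\frac{(d-1)!}{d^{d-1}}w$. Therefore $\rho_N(w)\to1$ pointwise. Alternatively, one can write $\rho_N(w)=s\psi_d'(u)^{-1}/u$ with $u=\psi_d^{-1}(s)$, $s=w/N^d\to0$, which equals $\sum_{k\le d-1}\frac{\log^k(1/u)}{k!}\big/\frac{\log^{d-1}(1/u)}{(d-1)!}\to1$.

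Since we only need a $\liminf$ and everything is nonnegative, Fatou's lemma suffices; no domination is required, though Lemma \ref{fifth trivial lemma} would give $\rho_N\le 2$ on $(0,c_dN^d)$ if one wanted one. Apply Fatou to the integrands $\chi_{(t,N^d)}(w)\mu(w,f)\frac{\log^{d-1}(w/t)}{(d-1)!}\rho_N(w)\frac1w$ on $(t,\infty)$. These converge pointwise to $\mu(w,f)\frac{\log^{d-1}(w/t)}{(d-1)!}\frac1w$, because $\chi_{(t,N^d)}\to\chi_{(t,\infty)}$. We obtain
$$\liminf_{N\to\infty}\int_{\psi_{N,d}^{-1}(t)}^{N^d}\cdots\,dv\ \geq\ \frac{1}{(d-1)!}\int_t^\infty\mu(w,f)\log^{d-1}\Big(\frac wt\Big)\frac{dw}{w}=((C^{*})^d\mu(f))(t),$$
where the last equality is \eqref{Ces adj d power}, since $\Gamma(d)=(d-1)!$.

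The main point to check carefully is the pointwise limit $\rho_N(w)\to1$. This uses that $\psi_d(s)/(s\psi_d'(s))\to1$ as $s\to0$, which holds because the top logarithmic power dominates. The case $d=1$ is trivial: there $\psi_{N,1}$ is the identity and $\rho_N\equiv1$. One should also verify that the substitution is legitimate, which it is because $\psi_{N,d}^{-1}$ is $C^1$ and increasing on $(0,N^d)$. Finiteness of the right-hand side is not needed, since the inequality holds in $[0,\infty]$; it is in any case guaranteed by $\mu(f)\in{\rm dom}(S^d)$.
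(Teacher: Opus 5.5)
Your proof is correct and follows the paper's argument: the same substitution $v=\psi_{N,d}^{-1}(w)$, the same lower bound on the logarithm via Lemma~\ref{first trivial lemma}, and the same pointwise limit of the weight $w(\psi_{N,d}^{-1})'(w)/\psi_{N,d}^{-1}(w)\to 1$ from Lemma~\ref{third trivial lemma}. The only difference is at the end, where you use Fatou's lemma. The paper instead truncates at $c_dN^d$ and applies dominated convergence with the bound from Lemma~\ref{fifth trivial lemma}. Since only a $\liminf$ inequality with nonnegative integrands is needed, your version is slightly leaner: it avoids Lemma~\ref{fifth trivial lemma} and does not require finiteness of $((C^{\ast})^d\mu(f))(t)$.
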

\begin{proof} We only consider those $N$ with $t<c_dN^d$ (here, $c_d$ is the constant featuring in Lemma \ref{fifth trivial lemma}). By Lemma \ref{third trivial lemma},
$$\mu(w,f)\frac{\log^{d-1}\big(\frac{w}{t}\big)}{(d-1)!}\frac{(\psi_{N,d}^{-1})'(w)}{\psi_{N,d}^{-1}(w)}\to \mu(w,f)\frac{\log^{d-1}\big(\frac{w}{t}\big)}{(d-1)!w},\quad N\to\infty.$$
By Lemma \ref{fifth trivial lemma},
$$\mu(w,f)\frac{\log^{d-1}\big(\frac{w}{t}\big)}{(d-1)!}\frac{(\psi_{N,d}^{-1})'(w)}{\psi_{N,d}^{-1}(w)}\leq 2\mu(w,f)\frac{\log^{d-1}\big(\frac{w}{t}\big)}{(d-1)!w},\quad w<c_dN^d.$$
Using Dominated Convergence Theorem, we infer
$$\lim_{N\to\infty}\int_t^{c_dN^d}\mu(w,f)\frac{\log^{d-1}\big(\frac{w}{t}\big)}{(d-1)!}\cdot\frac{(\psi_{N,d}^{-1})'(w)}{\psi_{N,d}^{-1}(w)}dw$$
$$=\int_t^{\infty}\mu(w,f)\frac{\log^{d-1}\big(\frac{w}{t}\big)}{(d-1)!}\cdot\frac{1}{w}dw\stackrel{\eqref{Ces adj d power}}{=}((C^{\ast})^d\mu(f))(t).$$
Substituting $v=\psi_{N,d}^{-1}(w),$ we write
$$\int_{\psi_{N,d}^{-1}(t)}^{N^d}\frac{\mu(\psi_{N,d}(v),f)}{v}\frac{\log^{d-1}\big(\frac{v}{\psi_{N,d}^{-1}(t)}\big)}{(d-1)!}dv$$
$$=\int_t^{N^d}\mu(w,f)\frac{\log^{d-1}\big(\frac{\psi_{N,d}^{-1}(w)}{\psi_{N,d}^{-1}(t)}\big)}{(d-1)!}\frac{(\psi_{N,d}^{-1})'(w)}{\psi_{N,d}^{-1}(w)}dw$$
$$\stackrel{L.\ref{first trivial lemma}}{\geq}\int_t^{c_dN^d}\mu(w,f)\frac{\log^{d-1}\big(\frac{w}{t}\big)}{(d-1)!}\frac{(\psi_{N,d}^{-1})'(w)}{\psi_{N,d}^{-1}(w)}dw.$$
Combining the last two equations, we complete the proof.
\end{proof}

\begin{lem}\label{sixth trivial lemma} Let $d_1,d_2\geq1$ be such that $d_1+d_2=d.$ For every $t\in(0,1)$ and for every $N\in\mathbb N$ with $\log(N)\geq t^{-1},$ we have
$$\left|\log\big(\frac{\psi_{N,d}^{-1}(w)\log^{d_2-1}(N)}{t}\big)\right|\leq c_d\Big(\log^{d-1}(w)+\log^{d-1}(\frac{e}{t})\Big),\quad w\in(t\log^{d_1}(N),N^d).$$
\end{lem}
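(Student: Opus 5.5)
The plan is to obtain two-sided bounds on $\psi_{N,d}^{-1}(w)$ that are only polylogarithmically far apart, and then take logarithms. The constant $c_d$ here only needs to depend on $d$; it need not be the constant of Lemma \ref{fifth trivial lemma}. Since $d_1,d_2\geq1$, we have $d\geq2$.

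Two preliminary observations. If $w>t\log^{d_1}(N)$ and $\log(N)\geq t^{-1}$, then $w>t\log(N)\geq1$, so $\log(w)\geq0$. Moreover, since $t\in(0,1)$, we have $\log(\frac et)\geq1$.

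\textbf{Step 1 (upper bound).} Because $\psi_d(u)\geq u$, we get $\psi_d^{-1}(s)\leq s$ for $s\in(0,1)$. Since $\psi_{N,d}^{-1}(w)=N^d\psi_d^{-1}(\frac{w}{N^d})$, this gives $\psi_{N,d}^{-1}(w)\leq w$.

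\textbf{Step 2 (lower bound).} Put $u=\psi_d^{-1}(s)$ with $s=\frac{w}{N^d}$, and $L=\log(\frac1u)$.
\begin{enumerate}[(a)]
\item Since $\frac{L^k}{k!}\leq(1+L)^k$, we get $s=\psi_d(u)\leq d\,u(1+L)^{d-1}$.
\item Taking logarithms, $L\leq\log(\frac1s)+\log d+(d-1)\log(1+L)$.
\item Absorbing the term $\log(1+L)$ into $\frac12L$ for large $L$, this yields $L\leq C_d\bigl(1+\log(\frac1s)\bigr)$.
\item Since $w\geq1$, we have $\log(\frac1s)=\log(\frac{N^d}{w})\leq d\log(N)$. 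Also $\log(N)\geq1$ because $t<1$.
\end{enumerate}
Combining these, $\psi_{N,d}^{-1}(w)\geq w\,(C_d\log N)^{-(d-1)}$.

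\textbf{Step 3 (combine).} Let $X=\frac{\psi_{N,d}^{-1}(w)\log^{d_2-1}(N)}{t}$.

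From Step 1,
$$\log X\leq\log w+(d_2-1)\log\log N+\log(\tfrac1t).$$
The hypothesis $w>t\log^{d_1}(N)$ gives $\log\log N\leq\frac{1}{d_1}\bigl(\log w+\log(\frac1t)\bigr)$. Hence $\log X\leq C\bigl(\log w+\log(\frac et)\bigr)$.

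From Step 2,
$$\log X\geq\log(\tfrac wt)-d_1\log\log N-C_d'\geq -C_d',$$
where the last inequality again uses $\frac wt\geq\log^{d_1}(N)$.

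Therefore
$$|\log X|\leq C\bigl(1+\log w+\log(\tfrac et)\bigr).$$

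\textbf{Step 4 (conversion to the stated form).} Since $d\geq2$ and $w\geq1$, we have $\log w\leq1+\log^{d-1}(w)$. Since $\log(\frac et)\geq1$, we have $1+\log(\frac et)\leq2\log^{d-1}(\frac et)$. This gives the claimed bound.

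The main obstacle is Step 2, specifically the self-referential inequality in (c) that controls $L$ by $\log(\frac1s)$. I would handle it by the elementary observation that $(d-1)\log(1+L)\leq\frac12L+C_d$ for all $L\geq0$.
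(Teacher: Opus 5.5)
Your proof is correct and follows the same strategy as the paper. Both arguments use the upper bound $\psi_{N,d}^{-1}(w)\le w$, a lower bound on $\psi_{N,d}^{-1}(w)$ that is only polylogarithmic in $N$, and the hypothesis $w>t\log^{d_1}(N)$ to absorb the resulting $\log\log N$ terms.

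There are two minor differences in detail. The paper gets its lower bound $\psi_{N,d}^{-1}(w)\ge c_d^{(1)}\log^{-d_2}(N)$ by monotonicity from the endpoint $t\log^{d_1}(N)$, implicitly using the asymptotics of $\psi_d^{-1}$, and then applies the triangle inequality. Your elementary bound $\psi_{N,d}^{-1}(w)\ge w\,(C_d\log N)^{-(d-1)}$ instead makes the $\log\log N$ terms cancel exactly in the lower estimate of $\log X$. Your Step 4 also spells out the passage from the linear bound $C(1+\log w+\log\frac et)$ to the stated $(d-1)$-st power form, which the paper leaves implicit.
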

\begin{proof} We have
$$\left|\log\big(\frac{\psi_{N,d}^{-1}(w)\log^{d_2-1}(N)}{t}\big)\right|\leq|\log(\psi_{N,d}^{-1}(w))|+\log(\frac1t)+(d_2-1)\log(\log(N)).$$
Since $\psi_d^{-1}(u)\leq u, \,\ 0<u<1,$ 
it follows that for $0<w<N^d$ we have $0<\frac{w}{N^d}<1,$ and therefore,
$$\psi_{N,d}^{-1}(w)=N^d\psi_d^{-1}(\frac{w}{N^d})\leq N^d\cdot \frac{w}{N^d}=w$$
and
$$\psi_{N,d}^{-1}(w)=N^d\psi_d^{-1}(\frac{w}{N^d})\geq N^d\psi_d^{-1}(\frac{t\log^{d_1}(N)}{N^d})$$
$$\stackrel{\log(N)\geq\frac1t}{\geq} N^d\psi_d^{-1}(\frac{\log^{d_1-1}(N)}{N^d})\geq c_d^{(1)}\log^{-d_2}(N)$$
for $w\in(t\log^{d_1}(N),N^d),$ it follows that
$$\left|\log\big(\frac{\psi_{N,d}^{-1}(w)\log^{d_2-1}(N)}{t}\big)\right|\leq\log(w)+|\log(c_d^{(1)})|+\log(\frac1t)+(2d_2-1)\log(\log(N))$$
for $w\in(t\log^{d_1}(N),N^d).$ Next,
$$\log(\log(N))\leq\frac1{d_1}\log(\frac{w}{t})\leq\log(w)+\log(\frac1t)$$
for $w\in(t\log^{d_1}(N),N^d).$ Thus,
$$\left|\log\big(\frac{\psi_{N,d}^{-1}(w)\log^{d_2-1}(N)}{t}\big)\right|\leq 2d_2\log(w)+|\log(c_d^{(1)})|+2d_2\log(\frac1t)$$
for $w\in(t\log^{d_1}(N),N^d).$ Hence,
$$\left|\log\big(\frac{\psi_{N,d}^{-1}(w)\log^{d_2-1}(N)}{t}\big)\right|^{d-1}$$
$$\leq(6d_2)^{d-1}\Big(\log^{d-1}(w)+|\log(c_d^{(1)})|^{d-1}+\log^{d-1}(\frac1t)\Big)$$
for $w\in(t\log^{d_1}(N),N^d).$ 
Taking the $(d-1)$-st root, we get
$$
\begin{aligned}
	\left|
	\log\left(
	\frac{\psi_{N,d}^{-1}(w)\log^{d_2-1}(N)}{t}
	\right)
	\right|
	&\leq
	6d_2
	\left(
	\log^{d-1}(w)
	+
	|\log(c_d^{(1)})|^{d-1}
	+
	\log^{d-1}\left(\frac1t\right)
	\right)^{\frac1{d-1}}.
\end{aligned}
$$
By the elementary inequality
$$
(a+b+c)^{\frac1{d-1}}
\leq
C_d\left(
a^{\frac1{d-1}}
+
b^{\frac1{d-1}}
+
c^{\frac1{d-1}}
\right),
\quad a,b,c\geq0,
$$
we therefore have
$$
\begin{aligned}
	\left|
	\log\left(
	\frac{\psi_{N,d}^{-1}(w)\log^{d_2-1}(N)}{t}
	\right)
	\right|
	&\leq
	C_d
	\left(
	\log w
	+
	|\log(c_d^{(1)})|
	+
	\log\frac1t
	\right).
\end{aligned}
$$
Since $0<t<1,$
$$
\log\frac et=1+\log\frac1t\geq1,
$$
and, since $c_d^{(1)}$ depends only on $d,$
$$
|\log(c_d^{(1)})|
\leq
C_d\log\frac et.
$$
Moreover,
$$
\log\frac1t\leq\log\frac et.
$$
Consequently, after renaming the constant, we obtain
$$
\begin{aligned}
	\left|
	\log\left(
	\frac{\psi_{N,d}^{-1}(w)\log^{d_2-1}(N)}{t}
	\right)
	\right|
	&\leq
	c_d
	\left(
	\log w+\log\frac et
	\right)
\end{aligned}
$$
for every
$
w\in\left(t\log^{d_1}(N),N^d\right).
$
This proves the assertion.
\end{proof}

\begin{lem}\label{third convergence lemma} Let $1\leq d_1,d_2\leq d$ be such that $d_1+d_2=d.$ For every $t>0,$ we have
$$\lim_{N\to\infty}\int_{N^{d_1}\psi_{N,d_2}^{-1}(\frac{t}{N^{d_1}})}^{N^d}\frac{\mu(\psi_{N,d}(v),f)}{v}\log^{d-1}\big(\frac{v}{N^{d_1}\psi_{N,d_2}^{-1}(\frac{t}{N^{d_1}})}\big)dv=0.$$
Here, $\psi_{N,d}$ is the function defined by \eqref{psi_N,d-function}.
\end{lem}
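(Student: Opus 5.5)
The plan is to follow the substitution used in Lemma \ref{second convergence lemma}. The point is that the lower limit of integration is attained at a value $w_N\to\infty$, so after a domination argument the integral becomes the tail of an integrable function.

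\textbf{Step 1: the lower limit.} Write $a_N:=N^{d_1}\psi_{N,d_2}^{-1}(\frac{t}{N^{d_1}})$. Since $\psi_{N,d_2}^{-1}(u)=N^{d_2}\psi_{d_2}^{-1}(u/N^{d_2})$, we get
$$a_N=N^d\psi_{d_2}^{-1}(t/N^d).$$
By the asymptotics $\psi_k^{-1}(s)=\frac{(k-1)!\,s}{\log^{k-1}(1/s)}(1+o(1))$ from the proof of Lemma \ref{second trivial lemma},
$$a_N\sim \frac{(d_2-1)!\,t}{d^{d_2-1}\log^{d_2-1}(N)}.$$
Set $w_N:=\psi_{N,d}(a_N)$. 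By Lemma \ref{third trivial lemma}, i.e. $\psi_{N,d}^{-1}(w)\sim \frac{(d-1)!}{d^{d-1}}\frac{w}{\log^{d-1}N}$, we expect $w_N\asymp t\log^{d_1}(N)$, and in particular $w_N\to\infty$. Since $d_1\ge1$, this is exactly where we use that $S$ is a proper subset.

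\textbf{Step 2: substitution and splitting.} Substituting $v=\psi_{N,d}^{-1}(w)$, the integral becomes
$$\int_{w_N}^{N^d}\mu(w,f)\,\log^{d-1}\Big(\frac{\psi_{N,d}^{-1}(w)}{a_N}\Big)\frac{(\psi_{N,d}^{-1})'(w)}{\psi_{N,d}^{-1}(w)}\,dw.$$
I split this at $c_dN^d$, with $c_d$ from Lemma \ref{fifth trivial lemma}.

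\emph{Range $(w_N,c_dN^d)$.} Lemma \ref{fifth trivial lemma} bounds the last factor by $2/w$. For the logarithm, write $a_N=t'\log^{1-d_2}(N)$ with $t'$ comparable to $t$, and apply the estimate obtained in the proof of Lemma \ref{sixth trivial lemma}, before the final power was taken. Shrinking $t'$ if necessary, its hypotheses $t'<1$ and $\log N\ge 1/t'$ hold for large $N$, and the estimate reads
$$\Big|\log\tfrac{\psi_{N,d}^{-1}(w)}{a_N}\Big|\le c_d\big(\log w+\log\tfrac{e}{t'}\big).$$
So this part of the integrand is dominated, uniformly in $N$, by
$$G(w)=2c_d^{d-1}\mu(w,f)\big(\log w+\log\tfrac e{t'}\big)^{d-1}w^{-1}\chi_{(w_N,\infty)}(w).$$
The function $\mu(w,f)(1+\log^{d-1} w)/w$ is integrable on $(1,\infty)$ because $\mu(f)\in{\rm dom}(S^d)$: finiteness of $(C^{*})^d\mu(f)$ is exactly this. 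Hence this piece is at most the tail $\int_{w_N}^\infty$ of a fixed integrable function, which tends to $0$ since $w_N\to\infty$.

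\emph{Range $(c_dN^d,N^d)$.} Here $\mu(w,f)\le\mu(c_dN^d,f)$, and
$$\log\frac{\psi_{N,d}^{-1}(w)}{a_N}\le\log\frac{N^d}{a_N}\lesssim \log N.$$
The remaining factor integrates exactly:
$$\int_{c_dN^d}^{N^d}\frac{(\psi_{N,d}^{-1})'}{\psi_{N,d}^{-1}}\,dw=\log\frac{1}{\psi_d^{-1}(c_d)},$$
a constant. Therefore this piece is $\lesssim \mu(c_dN^d,f)\log^{d-1}(N)$. Since $\mu(f)$ is decreasing, for large $x$
$$\mu(x,f)\log^{d}x\lesssim\int_{\sqrt x}^x\mu(s,f)\log^{d-1}(s)\frac{ds}{s},$$
and the right side tends to $0$ as the tail of a convergent integral. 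So this piece also vanishes.

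\textbf{Main obstacle.} I expect the main difficulty to be Step 1 together with the logarithm bound. One must check that $w_N$ genuinely tends to infinity, and that $\log(\psi_{N,d}^{-1}(w)/a_N)$ grows only logarithmically in $w$, uniformly in $N$. Lemma \ref{sixth trivial lemma} as stated has the exponent placed awkwardly, so I would use the linear bound established inside its proof rather than the final statement. If needed, I would re-derive it directly from the two-sided bound $c\,w\log^{1-d}N\le\psi_{N,d}^{-1}(w)\le w$ on $(w_N,N^d)$.
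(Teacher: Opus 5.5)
Your proposal is correct and is essentially the paper's own argument. The substitution $v=\psi_{N,d}^{-1}(w)$, the split at $c_dN^d$, Lemma~\ref{fifth trivial lemma} together with the linear logarithmic bound from inside the proof of Lemma~\ref{sixth trivial lemma}, the exact integration of $(\psi_{N,d}^{-1})'/\psi_{N,d}^{-1}$ on $(c_dN^d,N^d)$, and the tails of $\mu(w,f)\log^{d-1}(w)/w$ are exactly Steps 1--3 of the paper.

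The one real difference is how you bound $N^{d_1}\psi_{N,d_2}^{-1}(t/N^{d_1})\gtrsim t\log^{1-d_2}(N)$ from below: you use the asymptotics of $\psi_{d_2}^{-1}$, whereas the paper uses super-multiplicativity of $\psi_{d_2}^{-1}$ (Step 4). Three small fixes are needed:
\begin{itemize}
\item Verify $w_N\geq t'\log^{d_1}(N)$ by the direct computation of the paper's Step 3, since Lemma~\ref{third trivial lemma} is only pointwise in $t$.
\item Your $t'$ depends on $N$; freeze it to a fixed constant in $(0,1)$ below its limit before applying the bound.
\item The condition $d_1\geq1$ encodes $S\neq\emptyset$, not that $S$ is a proper subset.
\end{itemize}
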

\begin{proof} 
	We split the proof into four steps.
	
	{\bf Step 1:} For every $t\in(0,1),$ we have
$$\lim_{N\to\infty}\int_{t\log^{d_1}(N)}^{N^d}\mu(w,f)\Big|\log\big(\frac{\psi_{N,d}^{-1}(w)\log^{d_2-1}(N)}{t}\big)\Big|^{d-1}\frac{dw}{w}=0.$$
Indeed, for every $N\in\mathbb{N}$ with $\log(N)\geq\frac1t,$ it follows from Lemma \ref{sixth trivial lemma} that
$$\int_{t\log^{d_1}(N)}^{N^d}\mu(w,f)\Big|\log\big(\frac{\psi_{N,d}^{-1}(w)\log^{d_2-1}(N)}{t}\big)\Big|^{d-1}\frac{dw}{w}$$
$$\leq c_d\int_{t\log^{d_1}(N)}^{N^d}\mu(w,f)\log^{d-1}(w)\frac{dw}{w}+c_d\log^{d-1}(\frac{e}{t})\int_{t\log^{d_1}(N)}^{N^d}\mu(w,f)\frac{dw}{w}$$
$$\leq c_d\int_{t\log^{d_1}(N)}^{\infty}\mu(w,f)\log^{d-1}(w)\frac{dw}{w}+c_d\log^{d-1}(\frac{e}{t})\int_{t\log^{d_1}(N)}^{\infty}\mu(w,f)\frac{dw}{w}.$$
This yields the claim in Step 1.

{\bf Step 2:} For every $t\in(0,1),$ we have
$$\lim_{N\to\infty}\int_{t\log^{d_1}(N)}^{N^d}\mu(w,f)\Big|\log\big(\frac{\psi_{N,d}^{-1}(w)\log^{d_2-1}(N)}{t}\big)\Big|^{d-1}\frac{(\psi_{N,d}^{-1})'(w)}{\psi_{N,d}^{-1}(w)}dw=0.$$
Let $c_d$ be the constant in Lemma \ref{fifth trivial lemma}. It follows from Lemma \ref{fifth trivial lemma} that
$$\int_{t\log^{d_1}(N)}^{N^d}\mu(w,f)\log^{d-1}\big(\frac{\psi_{N,d}^{-1}(w)\log^{d_2-1}(N)}{t}\big)\frac{(\psi_{N,d}^{-1})'(w)}{\psi_{N,d}^{-1}(w)}dw$$
$$\leq 2\int_{t\log^{d_1}(N)}^{c_dN^d}\mu(w,f)\log^{d-1}\big(\frac{\psi_{N,d}^{-1}(w)\log^{d_2-1}(N)}{t}\big)\frac{dw}{w}$$
$$+\int_{c_dN^d}^{N^d}\mu(w,f)\log^{d-1}\big(\frac{\psi_{N,d}^{-1}(w)\log^{d_2-1}(N)}{t}\big)\frac{(\psi_{N,d}^{-1})'(w)}{\psi_{N,d}^{-1}(w)}dw.$$
Estimating the last integral, we obtain
\begin{eqnarray*}& &\int_{c_dN^d}^{N^d}\mu(w,f)\log^{d-1}\big(\frac{\psi_{N,d}^{-1}(w)\log^{d_2-1}(N)}{t}\big)\frac{(\psi_{N,d}^{-1})'(w)}{\psi_{N,d}^{-1}(w)}dw\\
&\leq& \mu(c_dN^d,f)\cdot\int_{c_dN^d}^{N^d}\log^{d-1}\big(\frac{\psi_{N,d}^{-1}(w)\log^{d_2-1}(N)}{t}\big)\frac{(\psi_{N,d}^{-1})'(w)}{\psi_{N,d}^{-1}(w)}dw\\
&\stackrel{w=\psi_{N,d}(v)}{=}&\mu(c_dN^d,f)\cdot\int_{\psi_d^{-1}(c_d)\cdot N^d}^{N^d}\log^{d-1}\big(\frac{v\log^{d_2-1}(N)}{t}\big)\frac{dv}{v}\\
&=&\mu(c_dN^d,f)\log^{d-1}\big(\frac{N^d\log^{d_2-1}(N)}{t}\big)\cdot\int_{\psi_d^{-1}(c_d)\cdot N^d}^{N^d}\frac{dv}{v}\\
&=&\mu(c_dN^d,f)\log^{d-1}\big(\frac{N^d\log^{d_2-1}(N)}{t}\big)\cdot\log(\frac1{\psi_d^{-1}(c_d)}) \\
&\leq& c_d'\mu(c_dN^d,f)\cdot(\log^{d-1}(N)+\log^{d-1}(\frac{e}{t}))\\
&\leq& c_d''\Big(\int_{\frac12c_dN^d}^{\infty}\mu(w,f)\log^{d-1}(w)\frac{dw}{w}+\log^{d-1}(\frac{e}{t})\int_{\frac12c_dN^d}^{\infty}\mu(w,f)\frac{dw}{w}\Big).
\end{eqnarray*}
Combining those two inequalities, we write
\begin{eqnarray*}& &\int_{t\log^{d_1}(N)}^{N^d}\mu(w,f)\log^{d-1}\big(\frac{\psi_{N,d}^{-1}(w)\log^{d_2-1}(N)}{t}\big)\frac{(\psi_{N,d}^{-1})'(w)}{\psi_{N,d}^{-1}(w)}dw\\
&\leq &2\int_{t\log^{d_1}(N)}^{c_dN^d}\mu(w,f)\log^{d-1}\big(\frac{\psi_{N,d}^{-1}(w)\log^{d_2-1}(N)}{t}\big)\frac{dw}{w}\\
&+&c_d''\Big(\int_{\frac12c_dN^d}^{\infty}\mu(w,f)\log^{d-1}(w)\frac{dw}{w}+\log^{d-1}(\frac{e}{t})\int_{\frac12c_dN^d}^{\infty}\mu(w,f)\frac{dw}{w}\Big).
\end{eqnarray*}
According to Step 1, the expression on the right hand side tends to $0$ as $N\to\infty.$ This yields the claim in Step 2.

{\bf Step 3:} For every $t\in(0,1),$ we have
$$\lim_{N\to\infty}\int_{\frac{t}{\log^{d_2-1}(N)}}^{N^d}\mu(\psi_{N,d}(v),f)\log^{d-1}\big(\frac{v\log^{d_2-1}(N)}{t}\big)\frac{dv}{v}=0.$$
For every $t\in(0,1),$ we obtain
\begin{eqnarray*}\psi_{N,d}(\frac{t}{\log^{d_2-1}(N)})&=&N^d\psi_d(N^{-d}\frac{t}{\log^{d_2-1}(N)})=\frac{tN^{-d}}{\log^{d_2-1}(N)}\sum_{k=0}^{d-1}\frac{\log^k(\frac{N^d\log(N)}{t})}{k!}\\
&\geq&\frac{tN^{-d}}{\log^{d_2-1}(N)}\cdot\frac{\log^{d-1}(\frac{N^d\log(N)}{t})}{(d-1)!}\stackrel{t\in(0,1)}{\geq}\frac{tN^{-d}}{\log^{d_2-1}(N)}\cdot\frac{\log^{d-1}(N^d\log(N))}{(d-1)!}\\
&\geq &\frac{tN^{-d}}{\log^{d_2-1}(N)}\cdot\frac{d^{d-1}\log^{d-1}(N)}{(d-1)!}\geq t\log^{d_1}(N).
\end{eqnarray*}
It follows that from Step 2 that
\begin{equation}\label{step-3-equ}\lim_{N\to\infty}\int_{\psi_{N,d}(\frac{t}{\log^{d_2-1}(N)})}^{N^d}\mu(w,f)\log^{d-1}\big(\frac{\psi_{N,d}^{-1}(w)\log^{d_2-1}(N)}{t}\big)\frac{(\psi_{N,d}^{-1})'(w)}{\psi_{N,d}^{-1}(w)}dw=0.
	\end{equation}
Substituting $w=\psi_{N,d}(v),$ we infer the claim in Step 3.

{\bf Step 4:} The function $\psi_{d_2}$ defined by \eqref{psi_d-function} is sub-multiplicative on $(0,1).$ 
Indeed, for $0<s,t<1,$ put
$$
x=\log\frac1s,\qquad y=\log\frac1t.
$$
Then $x,y>0,$ $s=e^{-x},$ $t=e^{-y},$ and by \eqref{psi_d-function},
$$
\psi_{d_2}(s)
=
e^{-x}\sum_{j=0}^{d_2-1}\frac{x^j}{j!},
\qquad
\psi_{d_2}(t)
=
e^{-y}\sum_{\ell=0}^{d_2-1}\frac{y^\ell}{\ell!}.
$$
Moreover,
$$
\psi_{d_2}(st)
=
e^{-(x+y)}
\sum_{k=0}^{d_2-1}\frac{(x+y)^k}{k!}.
$$
Using the binomial identity
$$
\frac{(x+y)^k}{k!}
=
\sum_{j=0}^k
\frac{x^j}{j!}\frac{y^{k-j}}{(k-j)!},
$$
we obtain
$$
\begin{aligned}
	\psi_{d_2}(st)
	&=
	e^{-(x+y)}
	\sum_{k=0}^{d_2-1}
	\sum_{j=0}^k
	\frac{x^j}{j!}\frac{y^{k-j}}{(k-j)!}\\
	&\leq
	e^{-(x+y)}
	\left(\sum_{j=0}^{d_2-1}\frac{x^j}{j!}\right)
	\left(\sum_{\ell=0}^{d_2-1}\frac{y^\ell}{\ell!}\right)\\
	&=
	\psi_{d_2}(s)\psi_{d_2}(t).
\end{aligned}
$$
Therefore,
$$
	\psi_{d_2}(st)\leq
	\psi_{d_2}(s)\psi_{d_2}(t),
	\quad 0<s,t<1,
$$
and hence, $\psi_{d_2}$ is sub-multiplicative on $(0,1).$ Hence, the function $\psi_{d_2}^{-1}$ is super-multiplicative on $(0,1).$ In other words,
$$N^{d_1}\psi_{N,d_2}^{-1}(\frac{t}{N^{d_1}})=N^d\psi_d^{-1}(\frac{t}{N^d})\geq \psi_{d_2}^{-1}(t)\cdot N^d\psi_{d_2}^{-1}(N^{-d})\geq \frac{c_d\psi_{d_2}^{-1}(t)}{\log^{d_2-1}(N)}$$
for every $t\in(0,1).$ Here, without loss of generality, $c_d\in(0,1).$
Substituting $c_d\psi_{d_2}^{-1}(t)$ instead of $t$ in Step 3, we obtain
$$\lim_{N\to\infty}\int_{\frac{c_d\psi_{d_2}^{-1}(t)}{\log^{d_2-1}(N)}}^{N^d}\mu(\psi_{N,d}(v),f)\log^{d-1}\big(\frac{v\log^{d_2-1}(N)}{c_d\psi_{d_2}^{-1}(t)}\big)\frac{dv}{v}=0.$$
Combining this with equation \eqref{step-3-equ}, we obtain
$$\lim_{N\to\infty}\int_{N^{d_1}\psi_{N,d_2}^{-1}(\frac{t}{N^{d_1}})}^{N^d}\frac{\mu(\psi_{N,d}(v),f)}{v}\log^{d-1}\big(\frac{v}{N^{d_1}\psi_{N,d_2}^{-1}(\frac{t}{N^{d_1}})}\big)dv=0$$
for every $t\in(0,1).$ Since the expression is decreasing in $t,$ the assertion follows.
\end{proof}

We are now fully equipped to proceed with the proof of Theorem \ref{c tensor power thm}.
\begin{proof}[Proof of Theorem \ref{c tensor power thm}] The first assertion in the theorem follows by combining Lemma \ref{main mu C lemma} and Lemma \ref{first convergence lemma}.

We concentrate on the second assertion. By Lemma \ref{second main equality lemma} (using the notations in Lemma \ref{main mu Cast lemma}), 
$$F_{N,\emptyset}=(C^{\ast})^{\otimes d}f_{N,d}+\sum_{\emptyset\neq S\subsetneq\{1,\cdots,d\}}(-1)^{|S|-1}F_{N,S},$$
where $$F_{N,S}:(t_1,\cdots,t_d)\to \int_{N^{|S|}\prod_{i\notin S}t_i}^{N^d}\frac{\mu(\psi_{N,d}(v),f)}{v}\frac{\log^{d-1}\big(\frac{v}{N^{|S|}\prod_{i\notin S}t_i}\big)}{(d-1)!}dv.$$
For a given $\epsilon>0,$ we have
$$\mu((1+(2^d-1)\epsilon)t,F_{N,\emptyset})\leq \mu(t,(C^{\ast})^{\otimes d}f_{N,d})+\sum_{\emptyset\neq S\subsetneq\{1,\cdots,d\}}\mu(\epsilon t,F_{N,S}).$$
Therefore,
$$\liminf_{N\to\infty}\mu((1+(2^d-1)\epsilon)t,F_{N,\emptyset})\leq$$
$$\leq\liminf_{N\to\infty}\mu(t,(C^{\ast})^{\otimes d}f_{N,d})+\sum_{\emptyset\neq S\subsetneq\{1,\cdots,d\}}\limsup_{N\to\infty}\mu(\epsilon t,F_{N,S}).$$
Using Lemmas \ref{second convergence lemma} and \ref{third convergence lemma}, we conclude
$$((C^{\ast})^d\mu(f))((1+(2^d-1)\epsilon)t)\leq\liminf_{N\to\infty}\mu(t,(C^{\ast})^{\otimes d}f_{N,d}).$$
Passing $\epsilon\to0,$ we complete the proof. 
\end{proof}

\section{Proof of Theorem \ref{main thm}}

This section is devoted to the proof of the main result, Theorem \ref{main thm}. Before doing so, we establish several auxiliary results that will be required in the argument.

The following well-known lemma establishes a fundamental relation between the operators $C,$ $C^*,$ and $S.$ We include its proof for the reader’s convenience.
\begin{lem}\label{kanat lemma} Let $C,$ $C^*,$ and $S$ be the operators defined by \eqref{Ces}, \eqref{Cesaro-adjoint}, and \eqref{S}, respectively. For any $0\leq f\in\Lambda_{\log}(0,\infty)$ we have 
	$$C^{\ast}Cf=CC^{\ast}f=Sf.$$
\end{lem}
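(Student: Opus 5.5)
The identity reduces to computing both compositions explicitly by Fubini–Tonelli. Since $f\ge 0$, all integrands are nonnegative, so interchanging the order of integration is justified without any integrability check. Finiteness is then inherited from $f\in\Lambda_{\log}$, because $Sf$ is finite almost everywhere for such $f$. I plan to compute $C^{\ast}Cf$ and $CC^{\ast}f$ separately and show each equals $Cf+C^{\ast}f$.

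\emph{First composition.} Write
$$(C^{\ast}Cf)(t)=\int_t^\infty\frac{1}{s^2}\int_0^s f(u)\,du\,ds=\int_0^\infty f(u)\int_{\max(t,u)}^\infty\frac{ds}{s^2}\,du=\int_0^\infty\frac{f(u)}{\max(t,u)}\,du.$$
Splitting the last integral at $u=t$ gives $\frac1t\int_0^t f(u)\,du+\int_t^\infty f(u)\frac{du}{u}=(Sf)(t)$. This also shows $Sf$ has kernel $\min\{t^{-1},u^{-1}\}$, consistent with \eqref{S-d-tensor-kernel} for $d=1$.

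\emph{Second composition.} Write
$$(CC^{\ast}f)(t)=\frac1t\int_0^t\int_s^\infty f(u)\frac{du}{u}\,ds=\frac1t\int_0^\infty\frac{f(u)}{u}\,\min(t,u)\,du.$$
This equals $\int_0^\infty f(u)\min\{t^{-1},u^{-1}\}\,du=(Sf)(t)$, by the same splitting at $u=t$.

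The only point requiring care is that the inner integrals are finite, so the resulting expressions are genuine functions rather than $+\infty$. Concretely, $\int_t^\infty f(u)\,du/u<\infty$ for every $t>0$ whenever $f\in\Lambda_{\log}$ (for the functions used here, $f=\mu(f)$ is decreasing). I do not expect a substantive obstacle; the lemma is a bookkeeping identity verified by Tonelli.
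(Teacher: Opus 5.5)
Your proof is correct and follows the paper's argument: both compositions are evaluated by interchanging the order of integration (Tonelli, justified by $f\ge 0$), and your kernel $1/\max(t,u)=\min\{t^{-1},u^{-1}\}$ is exactly what the paper obtains by splitting the inner integral at $\xi=t$. The only difference is cosmetic: you combine the two pieces into a single min/max kernel, whereas the paper writes the two pieces out separately.
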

\begin{proof} Let $0\leq f\in \Lambda_{\log}(0,\infty).$ By formulas \eqref{Ces} and \eqref{Cesaro-adjoint}, we write 
	$$(C^{\ast}Cf)(t)\stackrel{\eqref{Cesaro-adjoint},\eqref{Ces} }{=}\int_t^{\infty}\Big(\frac1s\int_0^sf(\xi)d\xi\Big)\frac{ds}{s}$$	
	Using the Fubini theorem, 
	$$RHS=\int_0^tf(\xi)\Big(\int_t^{\infty}\frac{ds}{s^2}\Big)d\xi+\int_t^{\infty}f(\xi)\Big(\int_{\xi}^{\infty}\frac{ds}{s^2}\Big)d\xi$$
	$$=\frac{1}{t}\int_{0}^{t}f(\xi)d\xi+\int_{s}^{\infty}\frac{f(\xi)}{\xi}d\xi\stackrel{\eqref{S}}{=}(Sf)(t)$$
	and	
	$$(CC^{\ast}f)(t)\stackrel{\eqref{Ces},\eqref{Cesaro-adjoint} }{=}\frac{1}{t}\int_{0}^{t}\Big(\int_{s}^{\infty}\frac{f(\xi)}{\xi}d\xi\Big)ds.$$
	Again using the Fubini theorem, we have 
	$$RHS=\frac{1}{t}\int_{0}^{t}\frac{f(\xi)}{\xi}\Big(\int_0^{\xi} ds\Big)d\xi+\frac{1}{t}\int_{s}^{\infty}\frac{f(\xi)}{\xi}\Big(\int_0^tds\Big)d\xi$$
	$$=\frac{1}{t}\int_{0}^{t}f(\xi)d\xi+\int_{s}^{\infty}\frac{f(\xi)}{\xi}d\xi\stackrel{\eqref{S}}{=}(Sf)(t).$$
\end{proof}

We next establish a combinatorial identity involving powers of the Ces\`{a}ro operator and its adjoint, which will be useful in iterated operator computations.
\begin{lem} \label{S power lemma} For $k\in \mathbb{N}$ let $C^k, C^{\ast k},$ and $S$ be the operators defined by \eqref{Cesaro d power}, \eqref{Ces adj d power}, and \eqref{S}, respectively.
	For every $f\in{\rm dom}(S^{k+1}),$ we have
	$$S(C^k+C^{\ast k})f=2Sf+\sum_{l=2}^{k+1}(C^l+C^{\ast l})f.$$
\end{lem}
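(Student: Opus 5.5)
The plan is to reduce everything to the operator identity of Lemma \ref{kanat lemma}, namely $C^{\ast}C=CC^{\ast}=S=C+C^{\ast}$, and then telescope. First I will work with $0\leq f\in{\rm dom}(S^{k+1})$. For such $f$ every integral involved has a nonnegative kernel, so Tonelli's theorem justifies all rearrangements. Every quantity is then a well-defined element of $[0,\infty]$, and finiteness follows at the end from $f\in{\rm dom}(S^{k+1})$, since all terms are dominated by $S^{k+1}f$. The general case then follows by splitting $f$ into real and imaginary parts and each of these into positive and negative parts; all operators are linear and finite on each piece. Along the way I will check that the integral operators $C^l$ and $C^{\ast l}$ of \eqref{Cesaro d power} and \eqref{Ces adj d power} coincide with the $l$-fold compositions of $C$ and $C^{\ast}$. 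This is a routine induction using Fubini and
$$\int_s^t\log^{l-1}\Big(\frac{u}{s}\Big)\frac{du}{u}=\frac{\log^{l}(t/s)}{l}.$$

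Next, I will prove by induction on $k\geq1$ the absorption identity
$$C^{\ast}C^{k}f=\sum_{l=1}^{k}C^{l}f+C^{\ast}f .$$
The base case $k=1$ is exactly Lemma \ref{kanat lemma}: $C^{\ast}Cf=Sf=Cf+C^{\ast}f$. For the inductive step I write $C^{\ast}C^{k}f=(C^{\ast}C)(C^{k-1}f)=(C+C^{\ast})C^{k-1}f=C^{k}f+C^{\ast}C^{k-1}f$ and apply the induction hypothesis. Here Lemma \ref{kanat lemma} is applied to the nonnegative function $C^{k-1}f$, which lies in $\Lambda_{\log}$ because $f\in{\rm dom}(S^{k+1})$. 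Symmetrically, using $CC^{\ast}=C+C^{\ast}$, I obtain
$$CC^{\ast k}f=\sum_{l=1}^{k}C^{\ast l}f+Cf .$$

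Finally, expanding $S=C+C^{\ast}$ gives
$$S(C^k+C^{\ast k})f=C^{k+1}f+C^{\ast}C^{k}f+C^{\ast(k+1)}f+CC^{\ast k}f.$$
Inserting the two absorption identities turns the right-hand side into
$$C^{k+1}f+C^{\ast(k+1)}f+\sum_{l=1}^{k}(C^l+C^{\ast l})f+Cf+C^{\ast}f.$$
Regrouping the $l=1$ terms with the extra $C+C^{\ast}$ gives $2(C+C^{\ast})f=2Sf$. What remains is $\sum_{l=2}^{k+1}(C^l+C^{\ast l})f$, which is the claimed identity.

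The algebra itself is straightforward. I expect the only genuine obstacle to be the domain bookkeeping: each intermediate function $C^{j}f$ and $C^{\ast j}f$ must lie in the domain where Lemma \ref{kanat lemma} applies, and all the pieces must be finite so that subtraction is legitimate. I will handle this by proving the identity first in $[0,\infty]$ for nonnegative $f$. I will then note that the left-hand side is bounded by a multiple of $S^{k+1}f$, which is finite because $f\in{\rm dom}(S^{k+1})=\Lambda_{\varphi_{k+1}}$.
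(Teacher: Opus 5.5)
Your argument is correct and uses the same single ingredient as the paper: Lemma \ref{kanat lemma} in the form $C^{\ast}C=CC^{\ast}=S=C+C^{\ast}$. What differs is how the induction is organized.

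The paper inducts on the full identity. In the inductive step it rewrites the cross terms as $C^{\ast}C^{k+1}=SC^{k}$ and $CC^{\ast(k+1)}=SC^{\ast k}$, so that they recombine into $S(C^k+C^{\ast k})f$ and the induction hypothesis applies verbatim. You instead decouple the two chains. You prove the closed forms $C^{\ast}C^{k}=C^{\ast}+\sum_{l=1}^{k}C^{l}$ and $CC^{\ast k}=C+\sum_{l=1}^{k}C^{\ast l}$ by separate inductions, and then get the lemma from one non-inductive expansion of $(C+C^{\ast})(C^k+C^{\ast k})$. The paper's route is shorter. Yours gives each mixed product explicitly and shows that the $C$-chain and the $C^{\ast}$-chain never interact.

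Your bookkeeping also supplies two points the paper uses tacitly. First, the paper applies Lemma \ref{kanat lemma}, which is stated for $0\leq f\in\Lambda_{\log}$, to $C^{k}f$ and $C^{\ast k}f$ without checking sign or membership. Second, it identifies $C\circ C^{k+1}$ with the kernel operator of \eqref{Cesaro d power}, and likewise for \eqref{Ces adj d power}, without comment. Your argument covers both: the Tonelli argument for nonnegative $f$ in $[0,\infty]$, the bound $S(C^k+C^{\ast k})f\leq 2S^{k+1}f$ giving finiteness of every nonnegative term on the right, the reduction of general $f$ to four nonnegative pieces dominated by $|f|$, and the identity $\int_s^t\log^{l-1}(u/s)\,\frac{du}{u}=\frac{1}{l}\log^{l}(t/s)$.

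In the $[0,\infty]$ framework the identity $C^{\ast}C=CC^{\ast}=C+C^{\ast}$ holds for every nonnegative measurable function. So your side claim that $C^{k-1}f\in\Lambda_{\log}$, while true, is not needed.
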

\begin{proof} We prove the assertion by induction on $k.$ Base of induction (i.e., the case $k=1$) follows from Lemma \ref{kanat lemma}. Indeed, if $k=1,$ then by Lemma \ref{kanat lemma}
	$$S(C+C^{\ast})f\stackrel{\eqref{S}}{=}(C+C^{\ast})(C+C^{\ast})f=C^2f+C^{* 2}f+CC^{*}f+C^{*}Cf$$
	$$=C^2f+C^{* 2}f+2CC^{*}f=2Sf+C^2f+C^{* 2}f.$$
	Let us now establish the step of induction. Suppose the assertion holds for $k.$ Let us prove it for $k+1.$ We have
\begin{eqnarray*}S(C^{k+1}+C^{\ast k+1})f&=&(C+C^*)(C^{k+1}+C^{\ast k+1})f\\
&=&(C^{k+2}+CC^{\ast k+1}+C^{\ast}C^{k+1}+C^{\ast k+2})f\\
&=&(C^{k+2}+SC^{\ast k}+SC^k+C^{\ast k+2})f\\
&=&(C^{k+2}+C^{\ast k+2})f+2Sf+\sum_{l=2}^{k+1}(C^l+C^{\ast l})f\\
&=&2Sf+\sum_{l=2}^{k+2}(C^l+C^{\ast l})f.
	\end{eqnarray*}
\end{proof}

The following lemma decomposes the
$d$-th power of $S$ into a sum of Ces\`{a}ro operator powers, providing a key combinatorial tool for subsequent estimates.
\begin{lem}\label{post-kanat equality}  Let $d\in \mathbb{N}$ and let $C^d, C^{\ast d},$ and $S^d$ be the operators defined by \eqref{Cesaro d power}, \eqref{Ces adj d power}, and  \eqref{S d power}, respectively. For every $f\in{\rm dom}(S^d),$ we have
	$$S^df=(\sum_{k=1}^da_{d,k}(C^k+C^{\ast k})f,$$
	where $a_{d,k}$ is a positive integer dependent on $d$ and $k$ only.
\end{lem}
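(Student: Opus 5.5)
Induction on $d$, with Lemma \ref{S power lemma} as the engine. For $d=1$ the claim is $S f=(C+C^{\ast})f$, i.e. \eqref{S} with $a_{1,1}=1$. For the step, assume $S^d f=\sum_{k=1}^d a_{d,k}(C^k+C^{\ast k})f$ with positive integers $a_{d,k}$, and apply $S$ to both sides. For $f\in{\rm dom}(S^{d+1})$, all intermediate expressions are finite, since the kernels are nonnegative; it suffices to treat $0\le f$ and then split a general $f$ into positive and negative parts. Linearity then gives
$$S^{d+1}f=\sum_{k=1}^d a_{d,k}\,S(C^k+C^{\ast k})f.$$

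Lemma \ref{S power lemma} turns each summand into $2Sf+\sum_{l=2}^{k+1}(C^l+C^{\ast l})f$, and $Sf=(C+C^{\ast})f$. Collecting terms gives
$$S^{d+1}f=\sum_{m=1}^{d+1}a_{d+1,m}(C^m+C^{\ast m})f,$$
where
$$a_{d+1,1}=2\sum_{k=1}^d a_{d,k},\qquad a_{d+1,m}=\sum_{k=m-1}^{d}a_{d,k}\quad(2\le m\le d+1).$$
Each of these is a sum of positive integers that contains at least one term (for $m=d+1$ it is the term $a_{d,d}$), so it is a positive integer depending only on $d$ and $m$.

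The only delicate point is justifying the manipulations: exchanging $S$ with the finite sum and applying Lemma \ref{S power lemma} to $f$, which needs $f\in{\rm dom}(S^{k+1})$ for each $k\le d$. For nonnegative $f$ every operator involved is a positive integral operator, so Tonelli makes all identities hold in $[0,\infty]$. Finiteness of $S^{d+1}f$ then forces every nonnegative summand, in particular each $(C^k+C^{\ast k})f$ and each $S(C^k+C^{\ast k})f$, to be finite. The same domination also shows ${\rm dom}(S^{d+1})\subset{\rm dom}(S^{k+1})$ for $k\le d$. Real $f$ then follow by linearity.
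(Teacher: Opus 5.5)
Your proposal is correct and follows the paper's proof essentially verbatim: induction on $d$ driven by Lemma \ref{S power lemma}, with the same recursion $a_{d+1,1}=2\sum_{k=1}^d a_{d,k}$ and $a_{d+1,m}=\sum_{k=m-1}^{d}a_{d,k}$ for $2\le m\le d+1$. Your extra Tonelli argument for nonnegative $f$ fills in domain bookkeeping that the paper leaves implicit: the identities hold in $[0,\infty]$, and then finiteness and the inclusion ${\rm dom}(S^{d+1})\subset{\rm dom}(S^{k+1})$ follow by domination.
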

\begin{proof} We prove the assertion by induction on $d.$ Base of induction (i.e., the case $d=1,$ is obvious, $a_{1,1}=1$). Let us now establish the step of induction. Suppose the assertion holds for $d.$ Let us prove it for $d+1.$ We have
	$$S^{d+1}f=S(\sum_{k=1}^da_{d,k}(C^k+C^{\ast k}f).$$
	Now, by Lemma \ref{S power lemma}, we have
	$$S(C^k+C^{\ast k})=2Sf+\sum_{l=2}^{k+1}(C^l+C^{\ast l})f.$$
	Hence,
	$$S^{d+1}f=2(\sum_{k=1}^da_{d,k})Sf+\sum_{k=1}^da_{d,k}\sum_{l=2}^{k+1}(C^l+C^{\ast l})f$$
	$$=2(\sum_{k=1}^da_{d,k})Sf+\sum_{l=2}^{d+1}\big(\sum_{k=l-1}^da_{d,k}\big)(C^l+C^{\ast l})f.$$
	This yields the step of induction and, hence, completes the proof.
\end{proof}

The next lemma shows the main inequality between the operators $C^d, C^{\ast d},$ and $S^d.$
\begin{lem}\label{post-kanat lemma}  Let $d\in \mathbb{N}$ and let $C^d, C^{\ast d},$ and $S^d$ be the operators defined by \eqref{Cesaro d power}, \eqref{Ces adj d power}, and \eqref{S d power}, respectively. For any $f=\mu(f)\in{\rm dom}(S^d),$ we have
	$$S^df\leq c_d\Big(C^d+C^{\ast d}\Big)f,$$
	where $c_d>0$ is a constant depending on $d$ only.
\end{lem}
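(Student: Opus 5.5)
By Lemma \ref{post-kanat equality}, $S^df=\sum_{k=1}^d a_{d,k}(C^k+C^{\ast k})f$ with positive integer coefficients. It therefore suffices to show that for every $1\le k\le d$ and every non-negative non-increasing $f=\mu(f)\in{\rm dom}(S^d)$,
$$C^kf\le c\,(C^df+C^{\ast d}f),\qquad C^{\ast k}f\le c\,(C^df+C^{\ast d}f),$$
with a constant depending only on $d$. Summing these bounds gives the lemma with $c_d=c\sum_k 2a_{d,k}$.

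The plan is to compare the kernels of $C^k$ and $C^d$ on dyadic scales, using monotonicity of $f$. Write $(C^kf)(t)=\frac1{\Gamma(k)}\int_0^1 f(tu)\log^{k-1}(1/u)\,du$. Split the integral into $u\in(e^{-1},1)$ and $u\in(0,e^{-1})$.

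On $(0,e^{-1})$ we have $\log(1/u)\ge 1$, so $\log^{k-1}(1/u)\le\log^{d-1}(1/u)$. Hence this part is at most $\frac{\Gamma(d)}{\Gamma(k)}(C^df)(t)$.

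On $(e^{-1},1)$ we use that $f$ is non-increasing: $f(tu)\le f(t/e)$. Also $\int_{e^{-1}}^1\log^{k-1}(1/u)\,du\le 1$. We then bound $f(t/e)$ from above by $C^df(t)$. By monotonicity,
$$(C^df)(t)\ge\frac1{\Gamma(d)}\int_{e^{-2}}^{e^{-1}}f(tu)\log^{d-1}(1/u)\,du\ge\frac{e^{-1}-e^{-2}}{\Gamma(d)}\,f(t/e),$$
since $f(tu)\ge f(t/e)$ for $u\le e^{-1}$ and $\log(1/u)\ge1$ there. So $f(t/e)\le c\,C^df(t)$, and both pieces are controlled by $C^df$.

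The same splitting handles $C^{\ast k}$. Write $(C^{\ast k}f)(t)=\frac1{\Gamma(k)}\int_1^\infty f(tu)\log^{k-1}(u)\frac{du}{u}$ and split at $u=e$. On $(e,\infty)$, $\log^{k-1}u\le\log^{d-1}u$, so this part is bounded by a multiple of $C^{\ast d}f(t)$. On $(1,e)$ the contribution is at most $f(t)\int_1^e\frac{du}{u}=f(t)$. The near-diagonal term $f(t)$ is then bounded by $C^df(t)$ exactly as above, since $f(t)\le f(t/e)\le c\,C^df(t)$. Notice that the estimate for $C^{\ast k}$ borrows from $C^d$ rather than $C^{\ast d}$. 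This is necessary: $C^{\ast d}f$ may vanish where $f$ does not, for example at the right end of the support of a compactly supported $f$.

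The main point to check is precisely this local term near $u=1$, where $\log^{k-1}$ is larger than $\log^{d-1}$. It is resolved by monotonicity of $f=\mu(f)$ together with the pointwise bound $f(t/e)\lesssim C^df(t)$. Finally, the hypothesis $f\in{\rm dom}(S^d)$ guarantees that every quantity in these inequalities is finite, so no convergence issues arise.
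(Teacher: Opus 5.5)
Your proof is correct and follows the paper's route. You reduce via Lemma~\ref{post-kanat equality}, split $C^{\ast k}f(t)$ at $s=et$ with the local piece bounded by $f(t)$, and absorb $f(t)$ into $C^df(t)$ rather than $C^{\ast d}f(t)$. The only difference is that the paper gets $C^kf\le C^df$ and $f\le C^df$ with constant $1$ from the monotone chain $f\le Cf\le C^2f\le\cdots\le C^df$, whereas you obtain them with $d$-dependent constants by splitting the kernel at $u=e^{-1}$ and using $f(t/e)\le c\,C^df(t)$; both arguments are valid.
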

\begin{proof} By Lemma \ref{post-kanat equality}, we have
	\begin{equation}\label{S^d-equality}S^df=(\sum_{k=1}^da_{d,k}(C^k+C^{\ast k})f.
	\end{equation}
	
	If $f=\mu(f),$ then $1\leq k\leq d$
\begin{eqnarray*}\int_t^{\infty}f(s)\log^{k-1}(\frac{s}{t})\frac{ds}{s}&=& \int_{et}^{\infty}f(s)\log^{k-1}(\frac{s}{t})\frac{ds}{s}+\int_{t}^{et}f(s)\log^{k-1}(\frac{s}{t})\frac{ds}{s}\\
	&\leq& f(t)\cdot\int_{t}^{et}\log^{k-1}(\frac{s}{t})\frac{ds}{s} +\int_{et}^{\infty}f(s)\log^{k-1}(\frac{s}{t})\frac{ds}{s}\\
&=&f(t)\cdot\int_{t}^{et}\log^{k-1}(\frac{s}{t})d\log(\frac{s}{t}) +\int_{et}^{\infty}f(s)\log^{k-1}(\frac{s}{t})\frac{ds}{s}\\
&=&\frac{1}{k}\cdot f(t) +\int_{et}^{\infty}f(s)\log^{k-1}(\frac{s}{t})\frac{ds}{s}\leq f(t) +\int_{et}^{\infty}f(s)\log^{k-1}(\frac{s}{t})\frac{ds}{s}\\
&\leq&f(t) +\int_{et}^{\infty}f(s)\log^{d-1}(\frac{s}{t})\frac{ds}{s}\leq f(t) +\int_{t}^{\infty}f(s)\log^{d-1}(\frac{s}{t})\frac{ds}{s}.
	\end{eqnarray*}
	In other words, we have
	\begin{equation}\label{Ces adj power est}(C^{\ast k}f)(t)\stackrel{\eqref{Ces adj d power}}{=}\frac{1}{\Gamma(k)}\int_t^{\infty}f(s)\log^{k-1}(\frac{s}{t})\frac{ds}{s}\leq \frac{1}{\Gamma(k)}f(t)+\frac{\Gamma(d)}{\Gamma(k)}(C^{\ast d}f)(t), \,\ t>0.
	\end{equation}
	Since $f=\mu(f)$ is decreasing we always have 
	$$Cf(t)\stackrel{\eqref{Ces}}{=}\frac{1}{t}\int_0^t f(s)ds\geq \frac{1}{t}f(t)\cdot\int_0^tds=f(t),\,\ t>0.$$
	It follows that 
	\begin{equation}\label{Cesaro power est}C^kf\leq C^df,\quad 0\leq k\leq d.
	\end{equation} 
	Thus,
	$$S^df\stackrel{\eqref{S^d-equality}}{=}\Big(\sum_{k=1}^da_{d,k}(C^k+C^{\ast k})\Big)f\stackrel{\eqref{Ces adj power est}}{\leq}\sum_{k=1}^da_{d,k}C^kf+\sum_{k=1}^d a_{d,k}\frac{\Gamma(d)}{\Gamma(k)}C^{\ast d}f+\sum_{k=1}^d a_{d,k}\frac{1}{\Gamma(k)}f $$
	$$\stackrel{\eqref{Cesaro power est}}{\leq} \big(\sum_{k=1}^da_{d,k}+\sum_{k=1}^d\frac{a_{d,k}}{\Gamma(k)}\Big)C^df+\sum_{k=1}^d a_{d,k}\frac{\Gamma(d)}{\Gamma(k)}C^{\ast d}f\leq c_d\Big(C^d+C^{\ast d}\Big)f,$$
	where 
	$$c_d=2\Gamma(d)\sum_{k=1}^d\frac{a_{d,k}}{\Gamma(k)}$$
	and $a_{d,k}$ is as in Lemma \ref{post-kanat equality}.
\end{proof}

Let $d\in\mathbb{N}$ and $1\leq j\leq d.$ Let $\{f_{l,j},\ l=1,\dots,n,\ j=1,\dots,d\}\subset L_p(\mathbb{R}),$ $1\leq p<\infty,$
$$f_l(\xi_1,\dots,\xi_d)=\prod_{j=1}^df_{l,j}(\xi_j)\in L_p(\mathbb{R}^d),$$    
and let $f_{l_1}\cdot f_{l_2}=0$ a.e. for $l_1\neq l_2.$ Set
\begin{align}\label{b1}
	f(\xi_1,\dots,\xi_d) := \sum_{l=1}^n f_l(\xi_1,\dots,\xi_d)=\sum_{l=1}^n\prod_{j=1}^df_{l,j}(\xi_j).
\end{align}
Note that
\begin{equation}\label{supp-equ}{\rm supp}(f)=\bigcup_{l=1}^n{\rm supp}(f_{l,1})\times\dots\times{\rm supp}(f_{l,d}), \quad {\rm supp}(f_{l_1})\cap{\rm supp}(f_{l_2})=\emptyset, \, l_1\neq l_2 \,\ {\rm a.e.}
\end{equation}
In other words, the carrier of $f$ is a disjoint union of cubes in $\mathbb{R}^d.$ Hence, the union of the carriers of two functions of this kind is also a disjoint union of such cubes. Therefore, the set $P(\mathbb{R}^d)$ of the functions of the form \eqref{b1} is a subalgebra in the algebra $S(\mathbb{R}^d)$. Applying the Luzin's and Stone-Weierstrass theorems, we can show that the algebra $P(\mathbb{R}^d)$ is dense in $L_p(\mathbb{R}^d).$
Moreover, by \eqref{supp-equ} we have 
$$
|f|^p = \sum_{l=1}^n  \,\Big|\prod_{j=1}^d f_{l,j}\Big|^p {\rm a.e.}
$$
Hence,
$$
\|f\|_{L_p(\mathbb R^d)}^p
= \sum_{l=1}^n \int_{\mathbb R^d} \prod_{j=1}^d |f_{l,j}(\xi_j)|^p\,d\xi=\sum_{l=1}^n \prod_{j=1}^d \int_{\mathbb R} |f_{l,j}(\xi_j)|^p\,d\xi_j.
$$
That is exactly
\begin{equation}\label{tensor-Lp-norm}
	\|f\|_{L_p(\mathbb R^d)}^p
	= \sum_{l=1}^n \,\|f_{l,1}\|_{L_p(\mathbb R)}^p \cdots \|f_{l,d}\|_{L_p(\mathbb R)}^p.
\end{equation}
Let us take such function $f.$ Then define the operator
\begin{align}\label{b2}
	\begin{split}
		(&({\rm id}^{\otimes k}\otimes H\otimes {\rm id}^{\otimes (d-1-k)})f)(\xi_1,\dots,\xi_d)\\
		&=\sum_{l=1}^{n}
		f_{l,1}(\xi_1)\cdots f_{l,k}(\xi_{k})\cdot (Hf_{l,k+1})(\xi_{k+1})\cdot f_{l,k+2}(\xi_{k+2})\cdots f_{l,d}(\xi_d), \,\ 0\leq k<d,
	\end{split}    
\end{align}
where $H$ is the Hilbert transform defined by \eqref{hilbert tr}. If $d=1,$ then for $k=0$ 
$$(({\rm id}^{\otimes k}\otimes H\otimes {\rm id}^{\otimes (d-1-k)})f)(\xi_1)=Hf(\xi_1),$$ and this is just the classical case and there is nothing to deal. If $d=2,$ then for $k=0$ we have
$$(({\rm id}^{\otimes k}\otimes H\otimes {\rm id}^{\otimes (d-1-k)})f)(\xi_1,\xi_2)=(H\otimes {\rm id})f(\xi_1,\xi_2)=\sum_{l=1}^{n}
(Hf_{l,1})(\xi_{1})\cdot f_{l,2}(\xi_2)\,$$
and for $k=1$ we have 
$$(({\rm id}^{\otimes k}\otimes H\otimes {\rm id}^{\otimes (d-1-k)})f)(\xi_1,\xi_2)=({\rm id}\otimes H)f(\xi_1,\xi_2)=\sum_{l=1}^{n}
 f_{l,1}(\xi_1)\cdot(Hf_{l,2})(\xi_{2}).$$
 Starting from $d\geq 3,$ it acts by formula \eqref{b2}.
The following is the key lemma which shows the boundedness of the operator defined by \eqref{b1}.
\begin{lem}\label{STZ-assumption} Let $d\geq 1.$ For $0\leq k<d,$ the operator
	$${\rm id}^{\otimes k}\otimes H\otimes {\rm id}^{\otimes (d-1-k)}$$
	defined by \eqref{b2} is extended to a linear bounded map from $L_{1}(\mathbb{R}^d)$ to 
	$L_{1,\infty}(\mathbb{R}^d)$ and from $L_{p}(\mathbb{R}^d)$ to $L_{p}(\mathbb{R}^d)$ for $1<p<\infty,$ respectively.
\end{lem}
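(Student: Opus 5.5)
The approach is to reduce to the one-dimensional Hilbert transform, acting fibrewise in the $(k+1)$-st variable, with the other variables frozen. By symmetry we may assume $k=0$, so the operator is $H\otimes{\rm id}^{\otimes(d-1)}$. Write $\xi=(\xi_1,\xi')$ with $\xi'\in\mathbb{R}^{d-1}$. For $f\in P(\mathbb{R}^d)$ of the form \eqref{b1}, for almost every fixed $\xi'$ the function $f(\cdot,\xi')=\sum_l f_{l,1}(\cdot)\prod_{j\ge2}f_{l,j}(\xi_j)$ is a finite linear combination of the $f_{l,1}$. Linearity of $H$ then gives
\[
((H\otimes{\rm id}^{\otimes(d-1)})f)(\xi_1,\xi')=\big(H[f(\cdot,\xi')]\big)(\xi_1).
\]
So on the dense subalgebra $P(\mathbb{R}^d)$ the operator is ``apply $H$ in $\xi_1$ for each fixed $\xi'$''.

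\textbf{Strong type.} For $1<p<\infty$, use the classical (M. Riesz) bound $\|Hg\|_{L_p(\mathbb{R})}\le c_p\|g\|_{L_p(\mathbb{R})}$ on each fibre and integrate over $\xi'$ with Fubini:
\[
\|(H\otimes{\rm id}^{\otimes(d-1)})f\|_p^p=\int_{\mathbb{R}^{d-1}}\|H[f(\cdot,\xi')]\|_p^p\,d\xi'\le c_p^p\int_{\mathbb{R}^{d-1}}\|f(\cdot,\xi')\|_p^p\,d\xi'=c_p^p\|f\|_p^p .
\]
By density of $P(\mathbb{R}^d)$ in $L_p(\mathbb{R}^d)$, the operator extends uniquely to a bounded map on $L_p(\mathbb{R}^d)$.

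\textbf{Weak type.} For $p=1$, use Kolmogorov's weak type $(1,1)$ estimate for $H$:
\[
m_1\{|Hg|>s\}\le \frac{c}{s}\|g\|_{L_1(\mathbb{R})}.
\]
By Fubini (Cavalieri on the product),
\[
m_d\{|(H\otimes{\rm id})f|>s\}=\int_{\mathbb{R}^{d-1}} m_1\{\xi_1: |H[f(\cdot,\xi')](\xi_1)|>s\}\,d\xi'\le \frac{c}{s}\int_{\mathbb{R}^{d-1}}\|f(\cdot,\xi')\|_1\,d\xi'=\frac{c}{s}\|f\|_{L_1(\mathbb{R}^d)} .
\]
Taking the supremum over $s$ gives $\|(H\otimes{\rm id})f\|_{1,\infty}\le c\|f\|_1$ on $P(\mathbb{R}^d)$.

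\textbf{Main obstacle: the extension to $L_1$.} $L_{1,\infty}$ is only quasi-normed, but the estimate is linear and uniform, so the operator is Lipschitz into a complete metrizable space. Concretely, take $f_n\to f$ in $L_1$ with $f_n\in P(\mathbb{R}^d)$. Then $(H\otimes{\rm id})f_n$ is Cauchy in $L_{1,\infty}$, hence Cauchy in measure, and so has a limit in measure. This limit is independent of the approximating sequence, and by the Fatou property of $L_{1,\infty}$ recorded in the preliminaries it satisfies the same bound.

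It is worth remarking that this extension coincides with the fibrewise Hilbert transform of $f$. The fibres $f(\cdot,\xi')$ lie in $L_1(\mathbb{R})$ for a.e. $\xi'$. Passing to a subsequence with $\sum_n\|f_n-f\|_1<\infty$ makes the fibre norms converge for a.e. $\xi'$. Then $H$ of those fibres converges in measure by Kolmogorov, and the limits identify.

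This fibrewise identification is the only delicate point. The rest is Fubini together with the classical one-dimensional estimates.
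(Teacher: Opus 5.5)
Your argument is correct, but it is organised differently from the paper's. The paper never slices into fibres. For $f=\sum_l\prod_j f_{l,j}$ as in \eqref{b1}, it bounds each elementary tensor separately, using $\|g\otimes h\|_{L_{1,\infty}(\mathbb{R}^2)}\le\|g\|_{L_1(\mathbb{R})}\|h\|_{L_{1,\infty}(\mathbb{R})}$ together with Kolmogorov (resp.\ Riesz) on the factor $Hf_{l,k+1}$. It then adds the pieces back up and rewrites $\sum_l\prod_j\|f_{l,j}\|_p^p$ as $\|f\|_p^p$ via the disjoint-support identity \eqref{tensor-Lp-norm}. Finally it extends by density and the Fatou property, and for $1<p<\infty$ it also shows, with the test functions $G_n$, that the norm equals $\|H\|_{L_p(\mathbb{R})\to L_p(\mathbb{R})}$.

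Your Fubini argument applies the one-dimensional bounds to the whole slice $f(\cdot,\xi')$ at once, so nothing has to be reassembled. This is a genuine gain. The paper's summation step uses $\|\sum_l g_l\|_{1,\infty}\le\sum_l\|g_l\|_{1,\infty}$ and $\|\sum_l g_l\|_p^p\le\sum_l\|g_l\|_p^p$, which you can only count on for disjointly supported summands. The summands of \eqref{b2} generally overlap once $H$ has acted in the $(k+1)$-st variable, since the disjointness in \eqref{b1} may come from that coordinate alone, and $L_{1,\infty}$ is only quasi-normed.

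Your route has several further advantages:
\begin{enumerate}
\item It gives the same final constants directly, without ever using the disjointness in \eqref{b1}.
\item It shows that the operator $T$ is well defined on the dense class, independently of how $f$ is represented.
\item It makes explicit that the $L_1$ extension is a limit in measure of a Cauchy sequence, which the paper leaves implicit when it invokes the Fatou property.
\item Through your fibrewise identification, it guarantees that the $L_1$ and $L_p$ extensions agree on $L_1\cap L_p$. This is what one wants before invoking \cite[Theorem 14]{STZ_JFA}.
\end{enumerate}

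One small adjustment is needed. Since disjointness plays no role for you, run the argument on the linear span of elementary tensors (or on dyadic step functions) rather than on $P(\mathbb{R}^d)$ as defined. A sum of two functions of the form \eqref{b1} need not again be of that form: on a common rectangle it is a sum of two elementary tensors. Your Cauchy step $Tf_n-Tf_m=T(f_n-f_m)$ requires a linear domain.
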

\begin{proof}For $d=1,$ there is nothing to prove, these are the classical Riesz \cite[Theorem 4.9 (a), p.139]{BSh} and  Kolmogorov \cite[Theorem 4.9 (b), p.139]{BSh}  theorems. Assume $d\geq2$ and let $f$ be a function given by \eqref{b1}.
		By \cite[Lemma 3.7, p. 1655]{CSZ} we have 
		$$\|f\otimes g\|_{L_{1,\infty}(\mathbb{R}^2)}\leq\|f\|_{L_{1}(\mathbb{R})}\cdot\|g\|_{L_{1,\infty}(\mathbb{R})},\,\ f\in L_{1}(\mathbb{R}),\, g\in L_{1,\infty}(\mathbb{R}),$$
		where $(f\otimes g)(\xi_1,\xi_2):=f(\xi_1)\cdot g(\xi_2), \, \xi_1,\xi_2\in\mathbb{R}.$
		Using this inequality and the Kolmogorov theorem \cite[Theorem 4.9 (b), p.139]{BSh} we obtain
		$$\|({\rm id}^{\otimes k}\otimes H\otimes {\rm id}^{\otimes (d-1-k)})f\|_{L_{1,\infty}(\mathbb{R}^d)}
		$$
		$$\stackrel{\eqref{b2}}{\leq}\sum_{l=1}^{n}\|
		f_{l,1}\|_{L_{1}(\mathbb{R})}\cdots \|f_{l,k}\|_{L_{1}(\mathbb{R})}\cdot \|H(f_{l,k+1})\|_{L_{1,\infty}(\mathbb{R})}\cdot \|f_{l,k+2}\|_{L_{1}(\mathbb{R})}\cdots \|f_{l,d}\|_{L_{1}(\mathbb{R})}$$
		$$\leq\|H\|_{L_1(\mathbb{R})\to L_{1,\infty}(\mathbb{R})}\sum_{l=1}^{n}\|
		f_{l,1}\|_{L_{1}(\mathbb{R})}\cdots \|f_{l,k}\|_{L_{1}(\mathbb{R})}\cdot \|f_{l,k+1}\|_{L_{1}(\mathbb{R})}\cdot \|f_{l,k+2}\|_{L_{1}(\mathbb{R})}\dots \|f_{l,d}\|_{L_{1}(\mathbb{R})}$$
		$$\leq\|H\|_{L_1(\mathbb{R})\to L_{1,\infty}(\mathbb{R})}\sum_{l=1}^{n}\|
		f_{l,1}\|_{L_{1}(\mathbb{R})}\cdots \|f_{l,d}\|_{L_{1}(\mathbb{R})}$$
		$$\stackrel{\eqref{tensor-Lp-norm}}{=}\|H\|_{L_1(\mathbb{R})\to L_{1,\infty}(\mathbb{R})}\|f\|_{L_1(\mathbb{R}^d)}.$$
		Let us take now arbitrary $f\in L_1(\mathbb{R}^d).$ Since functions of the form \eqref{b1} is dense in $L_1(\mathbb{R}^d),$  there exists a sequence $\{f_n\}$ of such functions such that 
		$$\|f-f_n\|_{L_1(\mathbb{R}^d)}\to 0, \quad n\to\infty.$$
		Therefore, we have
		$$\|({\rm id}^{\otimes k}\otimes H\otimes {\rm id}^{\otimes (d-1-k)})f_n\|_{L_{1,\infty}(\mathbb{R}^d)}\leq \|H\|_{L_1(\mathbb{R})\to L_{1,\infty}(\mathbb{R})}\|f_n\|_{L_1(\mathbb{R}^d)}$$
		$$  \leq 2\|H\|_{L_1(\mathbb{R})\to L_{1,\infty}(\mathbb{R})}\|f\|_{L_1(\mathbb{R}^d)}.$$  
		Since $L_{1,\infty}(\mathbb{R}^d)$ has the Fatou norm property we obtain
		$$\|({\rm id}^{\otimes k}\otimes H\otimes {\rm id}^{\otimes (d-1-k)})f\|_{L_{1,\infty}(\mathbb{R}^d)}\leq 4\|H\|_{L_1(\mathbb{R})\to L_{1,\infty}(\mathbb{R})}\|f\|_{L_1(\mathbb{R}^d)}.$$
		Similarly, let $f$ be as in \eqref{b1}. 
		Hence, by \eqref{b2}, \eqref{tensor-Lp-norm}, and the Riesz theorem \cite[Theorem 4.9 (a), p.139]{BSh} we have
		\begin{align*}\label{b4}
			\begin{split}
				\|(&({\rm id}^{\otimes k}\otimes H\otimes {\rm id}^{\otimes (d-1-k)})f)\|_{L_p(\mathbb{R}^d)}^p\\
				&\stackrel{\eqref{b2}}{\leq}\sum_{l=1}^{n}\|f_{l,1}\|_{L_p(\mathbb{R})}^p\cdots\|f_{l,k}\|_{L_p(\mathbb{R})}^p\cdot\|Hf_{l,k+1}\|_{L_p(\mathbb{R})}^p\cdot\|f_{l,k+1}\|_{L_p(\mathbb{R})}^p \cdots \|f_{l,d}\|_{L_p(\mathbb{R})}^p\\
				&\leq\|H\|_{L_p(\mathbb{R})\to L_p(\mathbb{R})}^p\cdot\sum_{l=1}^{n}\|f_{l,1}\|_{L_p(\mathbb{R})}^p\cdots\|f_{l,k}\|_{L_p(\mathbb{R})}^p\cdot\|f_{l,k+1}\|_{L_p(\mathbb{R})}^p\cdot\|f_{l,k+2}\|_{L_p(\mathbb{R})}^p \cdots \|f_{l,d}\|_{L_p(\mathbb{R})}^p\\
				&\stackrel{\eqref{tensor-Lp-norm}}{=}\|H\|_{L_p(\mathbb{R})\to L_p(\mathbb{R})}^p\cdot\|f\|_{L_p(\mathbb{R}^d)}^p.
			\end{split}    
		\end{align*}
		In other words, we have
		$$
		\|(({\rm id}^{\otimes k}\otimes H\otimes {\rm id}^{\otimes (d-1-k)})f)\|_{L_p(\mathbb{R}^d)}\leq\|H\|_{L_p(\mathbb{R})\to L_p(\mathbb{R})}\cdot\|f\|_{L_p(\mathbb{R}^d)}. 
		$$
		On the other hand, there exists 
		$$g_n\in L_p(\mathbb{R}),\ \|g_n\|_{L_p(\mathbb{R})}\leq 1,\ n=1,2,\dots,$$
		such that 
		$$\lim\limits_{n\to\infty} \|Hg_n\|_{L_p(\mathbb{R})}=\|H\|_{L_p(\mathbb{R})\to L_p(\mathbb{R})}.$$
		Then $$G_n(\xi_1,\dots,\xi_d):=g_n(\xi_k)\cdot\prod_{j\neq k}\chi_{[0,1]}(\xi_j)\in L_p(\mathbb{R}^d)$$ and
		$\|G_n\|_{L_p(\mathbb{R}^d)}=\|g_n\|_{L_p(\mathbb{R})}\leq 1.$
		Therefore, we have
		$$\lim_{n\to\infty}\|({\rm id}^{\otimes k}\otimes H\otimes {\rm id}^{\otimes (d-1-k)})G_n\|_{L_p(\mathbb{R}^d)}=\lim_{n\to\infty}\|Hg_n\|_{L_p(\mathbb{R})}=\|H\|_{L_p(\mathbb{R})\to L_p(\mathbb{R})}.$$
		Since the functions of the form \eqref{b1} are dense in $L_{p}(\mathbb{R}^d),$ it follows that
		$$\|{\rm id}^{\otimes k}\otimes H\otimes {\rm id}^{\otimes (d-1-k)}\|_{L_p(\mathbb{R}^d)\to L_p(\mathbb{R}^d)}=\|H\|_{L_p(\mathbb{R})\to L_p(\mathbb{R})},$$
		thereby completing the proof.
\end{proof}

The following lemma shows the main inequality between the tensor powers of the operators $C, C^*,$ and $S.$

\begin{lem}\label{lem:Cesaro-tensor-bound}
Let $d\in \mathbb{N}$. Let $C^{\otimes d},$ $C^{*\otimes d},$ and $S^{\otimes d}$ be the operators defined by \eqref{Ces-d-tensor}, \eqref{Ces-adjoint-d-tensor}, and \eqref{S-d-tensor}, respectively. For a nonnegative measurable function $f\in {\rm dom}(S^d),$ we have the pointwise inequalities
$$
C^{\otimes d} f \;\leq\; S^{\otimes d} f\quad\text{and} \quad  C^{*\otimes d} f \;\leq\; S^{\otimes d} f.
$$
\end{lem}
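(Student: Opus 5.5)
The plan is to compare kernels pointwise. All three operators are integral operators on $\mathbb{R}_+^d$ with nonnegative kernels, given by \eqref{Ces-d-tensor-kernel}, \eqref{Ces-adjoint-d-tensor-kernel} and \eqref{S-d-tensor-kernel}. For $f\geq0$ the inequalities therefore follow from the pointwise kernel inequalities
$$
K_{C^{\otimes d}}(s,t)\leq K_{S^{\otimes d}}(s,t),\qquad K_{C^{*\otimes d}}(s,t)\leq K_{S^{\otimes d}}(s,t),\qquad s,t\in\mathbb{R}_+^d,
$$
by integrating against $f(s)\,ds$.

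Each kernel is a product over $k=1,\dots,d$ of one-variable factors, and all factors are nonnegative. So it suffices to check the one-dimensional inequalities
$$
\frac{1}{t_k}\chi_{\{s_k\leq t_k\}}\leq \min\{t_k^{-1},s_k^{-1}\},\qquad \frac{1}{s_k}\chi_{\{s_k>t_k\}}\leq \min\{t_k^{-1},s_k^{-1}\},
$$
and then multiply them over $k$. For the first: when $s_k\leq t_k$ we have $t_k^{-1}\leq s_k^{-1}$, so the minimum equals $t_k^{-1}$; when $s_k>t_k$ the left side is $0$. The second is symmetric: when $s_k>t_k$ the minimum equals $s_k^{-1}$, and otherwise the left side vanishes. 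The product step is legitimate because $0\leq a_k\leq b_k$ for all $k$ implies $\prod_k a_k\leq\prod_k b_k$.

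I expect no genuine obstacle. The only point needing care is that the integrals make sense. Since $f\geq0$, all integrals are well-defined in $[0,\infty]$ by Tonelli's theorem, so the inequality holds without any convergence assumption. The hypothesis $f\in{\rm dom}(S^d)$ only guarantees that the right-hand side is finite, and hence so are the left-hand sides. Integrating the kernel inequality against $f(s)\,ds$ then gives $C^{\otimes d}f\leq S^{\otimes d}f$ and $C^{*\otimes d}f\leq S^{\otimes d}f$ pointwise on $\mathbb{R}_+^d$.
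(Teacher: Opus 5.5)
Your proof is correct and is essentially the paper's argument. The paper writes $\min\{t_k^{-1},s_k^{-1}\}=A_k+B_k$ with $A_k=t_k^{-1}\chi_{\{s_k\le t_k\}}$ and $B_k=s_k^{-1}\chi_{\{s_k>t_k\}}$, bounds $\prod_k A_k$ and $\prod_k B_k$ by $\prod_k(A_k+B_k)$, and integrates against $f\ge 0$, which is your factorwise kernel comparison.
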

\begin{proof}
Fix $d\in \mathbb{N}$. For $s=(s_1,\dots,s_d),t=(t_1,\dots,t_d)\in(0,\infty)^d$ and for each coordinate $k=1,\dots,d$ set
$$
A_k(t,s):=\frac{1}{t_k}\chi_{\{s_k\leq t_k\}},\qquad
B_k(t,s):=\frac{1}{s_k}\chi_{\{s_k> t_k\}}.
$$
Since $t_k,s_k \in (0,\infty)$ for all $1\leq k\leq d,$ we have $A_k,B_k\geq0$ and
\begin{equation}\label{s kernel}
	\min\{\tfrac{1}{t_k},\tfrac{1}{s_k}\}=A_k(t,s)+B_k(t,s),\, 1\leq k\leq d.
\end{equation}
Indeed, if $s_k\leq t_k,$ then $B_k(t,s)=0,$
consequently,
$$
\min\{\tfrac{1}{t_k},\tfrac{1}{s_k}\}=\frac{1}{t_k}=A_k(t,s)=A_k(t,s)+B_k(t,s).
$$
If $s_k>t_k,$ then $A_k(t,s)=0,$ and
similarly,
$$
\min\{\tfrac{1}{t_k},\tfrac{1}{s_k}\}=\frac{1}{s_k}=B_k(t,s)=A_k(t,s)+B_k(t,s).
$$
By definitions kernels of these operators satisfy
$$
K_{C^{\otimes d}}(s,t)\stackrel{\eqref{Ces-d-tensor-kernel}}{=}\prod_{k=1}^d A_k(t,s), \quad
K_{C^{*\otimes d}}(s,t)\stackrel{\eqref{Ces-adjoint-d-tensor-kernel}}{=}\prod_{k=1}^d B_k(t,s),
$$
and
$$
K_{S^{\otimes d}}(s,t)\stackrel{\eqref{S-d-tensor-kernel},\eqref{s kernel}}{=}\prod_{k=1}^d\big(A_k(t,s)+B_k(t,s)\big),
$$
respectively. Because all factors are nonnegative, for each $s,t\in(0,\infty)^d$ we have the inequalities
$$
\prod_{k=1}^d A_k(t,s)
\leq\prod_{k=1}^d\big(A_k(t,s)+B_k(t,s)\big)
\quad\text{and}\quad
\prod_{k=1}^d B_k(t,s)
\leq\prod_{k=1}^d\big(A_k(t,s)+B_k(t,s)\big).
$$
Integrating these pointwise kernel inequalities against the nonnegative function $f$ yields, for every fixed $t\in (0,\infty)^d$,
$$
(C^{\otimes d} f)(t)
=\int_{0}^{\infty} \prod_{k=1}^d A_k(t,s)\,f(s)\,ds
\leq \int_{0}^{\infty} \prod_{k=1}^d\big(A_k(t,s)+B_k(t,s)\big)\,f(s)\,ds
= (S^{\otimes d} f)(t),
$$
and similarly
$$
(C^{*\otimes d} f)(t)
\leq (S^{\otimes d} f)(t).
$$
This proves the lemma.
\end{proof}
We are now in a position to proceed with the proof of the main result of this paper.
\begin{proof}[Proof of Theorem \ref{main thm}] Let $f\in S(\mathbb{R}^d)$ be such that $\mu(f)\in {\rm dom}(S^d).$ Denote for brevity let 
	$$\mathbf H_0:=H,\ \  d=1;$$
	$$\mathbf H_0:=H\otimes{\rm id}, \,\  \mathbf H_1:={\rm id}\otimes H, \,\ d=2;$$
$$\mathbf H_k:={\rm id}^{\otimes k}\otimes H\otimes {\rm id}^{\otimes (d-1-k)},\quad 0\leq k<d,\,\ d\geq 3;$$
$$f_k=\mathbf  H_kf_{k+1},\quad 0\leq k<d,\quad f_d=f.$$
Then by Lemma \ref{STZ-assumption}, $\mathbf  H_k$ satisfies the conditions in \cite[Theorem 14]{STZ_JFA}. Therefore, we write
$$\mu(f_k)=\mu(\mathbf  H_kf_{k+1})\leq c_{{\rm abs}}S\mu(f_{k+1}),\quad 0\leq k<d.$$
Since $f_0=H^{\otimes d}f,$ it follows that
$$\mu(H^{\otimes d}f)=\mu(f_0)\leq c_{{\rm abs}}S\mu(f_{1})\leq (c_{{\rm abs}})^2S^2\mu(f_{2})$$
$$\leq \cdots \leq  (c_{{\rm abs}})^dS^d\mu(f_{d})=(c_{{\rm abs}})^dS^d\mu(f).$$
This proves the first assertion (i).	

To prove the second assertion (ii), let $f_{N,d}$ be as in Theorem \ref{c tensor power thm}. Extend $f_{N,d}$ to $\mathbb{R}^d$ by setting $f_{N,d}=0$ on $\mathbb{R}^d\backslash\mathbb{R}^d_+.$ For $t=(t_1,t_2,\cdots,t_d)\in\mathbb{R}^d_+,$ we have
$$(H^{\otimes d}f_{N,d})(-t)\stackrel{\eqref{d-tensor-hilbert tr}}{=}-\int_{\mathbb{R}_+^d}\frac{f_{N,d}(s)ds}{\prod_{k=1}^d(t_k+s_k)}.$$
Since $f_{N,d}\geq0,$ it follows that
$$|(H^{\otimes d}f_{N,d})(-t)|\geq 2^{-d}\int_{\mathbb{R}_+^d}\prod_{k=1}^d\min\{t_k^{-1},s_k^{-1}\}f_{N,d}(s)ds\stackrel{\eqref{S-d-tensor}}{=}2^{-d}(S^{\otimes d}f_{N,d})(t),\quad t\in\mathbb{R}^d_+.$$
In particular,
$$\mu(H^{\otimes d}f_{N,d})\geq 2^{-d}\mu(S^{\otimes d}f_{N,d}).$$
By Lemma \ref{lem:Cesaro-tensor-bound} we have
$$
S^{\otimes d} f_{N,d}\geq C^{\otimes d} f_{N,d}\quad\text{and} \quad S^{\otimes d} f_{N,d}\geq C^{*\otimes d} f_{N,d},
$$
and consequently,
$$\mu(S^{\otimes d} f_{N,d})\geq \mu(C^{\otimes d} f_{N,d})\quad\text{and} \quad \mu(S^{\otimes d} f_{N,d})\geq \mu(C^{*\otimes d} f_{N,d}),
$$
Thus,
$$\mu(H^{\otimes d}f_{N,d})\geq 2^{-d}\mu(C^{\otimes d}f_{N,d}),\quad \mu(H^{\otimes d}f_{N,d})\geq 2^{-d}\mu(C^{\ast \otimes d}f_{N,d}).$$
By Theorem \ref{c tensor power thm},
$$\liminf_{N\to\infty}\mu(H^{\otimes d}f_{N,d})\geq 2^{-d}C^d\mu(f),\quad \liminf_{N\to\infty}\mu(H^{\otimes d}f_{N,d})\geq 2^{-d}C^{\ast d}\mu(f).$$
Then combining these we obtain
$$\liminf_{N\to\infty}\mu(H^{\otimes d}f_{N,d})\geq 2^{-d-1}(C^d\mu(f)+C^{\ast d}\mu(f)).$$
Hence, by Lemma \ref{post-kanat lemma} we obtain
$$\liminf_{N\to\infty}\mu(H^{\otimes d}f_{N,d})\geq 2^{-d-1}(C^d\mu(f)+C^{\ast d}\mu(f))\geq \frac{1}{c_d\cdot2^{d+1}}S^d\mu(f),$$
which proves the second assertion.
\end{proof}

\subsection{Conflict of Interest Statement:} The author declares that there is no conflict of interest.

\subsection{ Data Availability Statement:} No datasets were generated or analyzed during the current study.

\section{Acknowledgements}
Authors would like to thank Professor Alexei Ber for his detailed discussion and his comments on this work.
The work was partially supported by the Australian Research Council. F.S and K.T. were partially supported by ARC grant FL17010005 and ARC grant DP230100434.  
The authors thank the anonymous referee for the careful reading of the manuscript and for bringing to our attention an error in Lemma 12 of the previous version.


\end{document}